\theoremstyle{definition}
\newtheorem{dfn}{Definition}[section]
\newtheorem{thm}[dfn]{Theorem}
\newtheorem{lem}[dfn]{Lemma}
\newtheorem{cor}[dfn]{Corollary}
\newtheorem{pro}[dfn]{Proposition}
\newtheorem{exa}[dfn]{Example}
\newtheorem{rem}[dfn]{Remark}
\newcommand{\Z}{\mathbb{Z}}
\newcommand{\Q}{\mathbb{Q}}
\newcommand{\C}{\mathbb{C}}
\newcommand{\E}{\mathcal{E}}
\title{Self-Closeness Numbers of Non-Simply-Connected Spaces}
\author{TONG YICHEN}
\subjclass[2010]{55S37,55P10}
\keywords{self-closeness number, (co-)$H_0$-space, obstruction theory}
\address{Department of Mathematics, Kyoto University, Kyoto, 606-8502, Japan}
\email{tong.yichen.25m@st.kyoto-u.ac.jp}
\begin{document}
	\bibliographystyle{amsplain}
	\maketitle
	\begin{abstract}
		The self-closeness number $N\E(X)$ of a space $X$ is the least integer $k$ such that any self-map is a homotopy equivalence whenever it is an isomorphism in the $n$-th homotopy group for each $n\le k$. We discuss the self-closeness numbers of certain non-simply-connected $X$ in this paper. As a result, we give conditions for $X$ such that $N\E(X)=N\E(\widetilde{X})$, where $\widetilde{X}$ is the universal covering space of $X$. 
	\end{abstract}
	\section{Introduction}
	Given a self-map $f:X\rightarrow X$, one of the most fundamental problems in algebraic topology is to find the condition for the map being a homotopy equivalence. The standard answer is the J.H.C. Whitehead's first theorem, saying that if a map induces isomorphisms in homotopy groups in all dimensions and $X$ is a CW complex, then $f$ is a homotopy equivalence. However, it often happens that if we can see that $f$ induces isomorphisms in homotopy groups in some dimensions, instead of all dimensions, then we get that $f$ is a homotopy equivalence. For example, a self-map of $\C P^m$ is a homotopy equivalence if it induces an isomorphism in $\pi_2(\C P^m)$ since $\C P^m$ is simply-connected and its cohomology is generated by an element of degree 2. So it is natural to find the upper bound of such number. In 2015, Ho Won Choi and Kee Young Lee introduced the self-closeness number in \cite{CL15} as follows 
	\begin{dfn}
		The \emph{self-closeness number} of a space $X$, denoted by $N\mathcal{E}(X)$, is defined to be the least integer $k$ such that any self-map $f\colon X\to X$ is a homotopy equivalence whenever $f$ is an isomorphism in the $n$-th homotopy group for each $n\le k$.
	\end{dfn}
	By the J.H.C Whitehead's first theorem, this number represents the complexity of the topology of a space $X$, and is a relatively new invariant. 
	By the J.H.C. Whitehead's second theorem, the self-closeness number of a simply-connected space is easier to compute because it is controlled by homology. In \cite{OY17} and \cite{OY18}, some inequalities were obtained, which reveal the behavior of self-closeness numbers of simply-connected spaces under cell attachments or extensions by fibrations. However, these results are not applicable to non-simply-connected spaces and the only way to compute its self-closeness number is a straight forward calculation. In fact, the self-closeness numbers of non-simply-connected spaces have barely been studied and determined only for real projective spaces and lens spaces. The aim of this paper is to investigate the behavior of self-closeness numbers for some non-simply-connected spaces.

	The group of self homtopy equivalences $\mathcal{E}\left(X\right)$ of a space $X$ has been studied intensely for a long time. However, there is few researches on $\E(X)$ for $X$ non-simply-connected. The self-closeness number provides a powerful tool to study $\E(X)$ by focusing on isomorphisms in lower dimensions. The self-closeness numbers for several non-simply-connected spaces are computed by the following results, and these would help determine $\E(X)$ of them.

	Now we state our main theorems. For a graded algebra $A$, let $d(A)$ denote the maximal degree of generators of $A$. Recall that an $H_0$-space is a simply-connected finite complex which is an $H$-space after rationalization. 
	
	\begin{thm}
		\label{main}
		Let $X$ be a finite complex such that $\pi_1(X)$ is finite and its universal covering space $\widetilde{X}$ is an $H_0$-space. If $d(H^*(\widetilde{X};\mathbb{Z}))=d(H^*(\widetilde{X};\mathbb{Q}))$ and $\pi_1(X)$ acts trivially on $H^*(\widetilde{X};\mathbb{Q})$, then
		\[
		N\mathcal{E}(X)=N\mathcal{E}(\widetilde{X})=d(H^*(\widetilde{X};\Z)).
		\]
	\end{thm}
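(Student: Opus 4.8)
The plan is to prove the two equalities separately: first that $N\mathcal{E}(\widetilde{X})=d$, where $d:=d(H^*(\widetilde{X};\Z))$, which is a statement purely about the $H_0$-space $\widetilde{X}$, and then that $N\mathcal{E}(X)=N\mathcal{E}(\widetilde{X})$ by comparing self-maps of $X$ with self-maps of $\widetilde{X}$ through the universal covering $p\colon\widetilde{X}\to X$ with finite deck group $G=\pi_1(X)$. Two standing facts drive the comparison. First, since $\widetilde{X}$ is simply connected, $p_*\colon\pi_n(\widetilde{X})\to\pi_n(X)$ is an isomorphism for every $n\ge 2$, and any self-map $f\colon X\to X$ admits a lift $\widetilde{f}\colon\widetilde{X}\to\widetilde{X}$ with $p\widetilde{f}=fp$; under this identification $f$ and $\widetilde{f}$ agree on $\pi_n$ for $n\ge 2$. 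Second, as $G$ is finite and acts trivially on $H^*(\widetilde{X};\Q)$, the transfer shows $p^*\colon H^*(X;\Q)\to H^*(\widetilde{X};\Q)$ is an isomorphism, so $d(H^*(X;\Q))=d$ as well.

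For $N\mathcal{E}(\widetilde{X})=d$ I would argue as follows. Because $\widetilde{X}$ is an $H_0$-space, $H^*(\widetilde{X};\Q)$ is a free graded-commutative algebra whose generators, and hence the nonzero rational homotopy groups, sit in degrees $\le d$; moreover $d$ itself is a generating degree, so $\pi_d(\widetilde{X})\otimes\Q\ne 0$. If a self-map is an isomorphism on $\pi_n$ for all $n\le d$, then it is a rational-cohomology isomorphism on generators, hence a rational equivalence, and the hypothesis $d(H^*(\widetilde{X};\Z))=d(H^*(\widetilde{X};\Q))$ upgrades this to an integral homology isomorphism; Whitehead's second theorem then gives a homotopy equivalence, proving $N\mathcal{E}(\widetilde{X})\le d$. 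For the reverse inequality I would exhibit a self-map that is the identity on $\pi_n$ for $n<d$ but multiplies $\pi_d(\widetilde{X})\otimes\Q$ by an integer $m\neq\pm1$, using the rational $H$-structure (a power/diagonal map on the top factor of $\widetilde{X}_\Q\simeq\prod_i K(\Q,n_i)$); such a map is an isomorphism on $\pi_{<d}$ but not a rational equivalence, so $N\mathcal{E}(\widetilde{X})\ge d$.

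The inequality $N\mathcal{E}(X)\le d$ is then immediate by lifting: if $f\colon X\to X$ is an isomorphism on $\pi_n$ for all $n\le d$, its lift $\widetilde{f}$ is an isomorphism on $\pi_n$ for $2\le n\le d$ and trivially on $\pi_1(\widetilde{X})=0$, hence on all $\pi_n$ with $n\le d$. By $N\mathcal{E}(\widetilde{X})=d$ the lift $\widetilde{f}$ is a homotopy equivalence, so $f$ induces isomorphisms on $\pi_n$ for every $n\ge 2$, and also on $\pi_1$ by assumption (note $d\ge 2$). By Whitehead's first theorem $f$ is a homotopy equivalence, giving $N\mathcal{E}(X)\le d=N\mathcal{E}(\widetilde{X})$.

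The reverse inequality $N\mathcal{E}(X)\ge d$ is the main obstacle, since it requires \emph{producing} a self-map of $X$ that is an isomorphism on $\pi_n$ for $n<d$ yet is not a homotopy equivalence; unlike lifting, pushing a map down the covering is constrained by equivariance. My plan is to realize the bad map of the second paragraph $G$-equivariantly and descend it. The trivial $G$-action on $H^*(\widetilde{X};\Q)$ forces $G$ to act trivially on the rational homotopy (the indecomposables of $H^*(\widetilde{X};\Q)$), so the multiplication-by-$m$ map on the top factor commutes with $G$ up to homotopy; the remaining work is to rigidify this to an honest $G$-equivariant self-map $\widetilde{g}$ of $\widetilde{X}$. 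For this I would run obstruction theory over the skeleta of $\widetilde{X}$, with the finiteness of $G$ keeping the relevant obstruction and indeterminacy groups under control and the hypothesis $d(H^*(\widetilde{X};\Z))=d$ guaranteeing that no new torsion phenomena arise in the top degree to spoil the construction. Once $\widetilde{g}$ is $G$-equivariant it descends to $g\colon X\to X$ with $pg=\widetilde{g}p$, and $g$ inherits from $\widetilde{g}$ the property of being an isomorphism on $\pi_n$ for $2\le n<d$ while failing to be a rational equivalence (as $H^d(X;\Q)\cong H^d(\widetilde{X};\Q)$); choosing the equivariant map to fix $\pi_1$ makes $g$ an isomorphism on $\pi_1$ as well. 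Hence $g$ is an isomorphism on $\pi_n$ for all $n<d$ but not a homotopy equivalence, so $N\mathcal{E}(X)\ge d$, completing the proof. The crux, and where I expect the real difficulty to lie, is this equivariant rigidification and descent step, balancing the integral constraints against the finiteness of $G$.
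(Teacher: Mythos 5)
Your overall frame---upper bound by lifting self-maps to $\widetilde{X}$ plus a cohomological bound, lower bound by producing a self-map that is an isomorphism below the top degree but rationally multiplies the top generator---is the same as the paper's, and your first three steps are essentially right. (One misstatement there: a rational equivalence is \emph{not} ``upgraded'' to an integral homology isomorphism by the hypothesis $d(H^*(\widetilde{X};\Z))=d(H^*(\widetilde{X};\Q))$; a degree-$2$ self-map of a sphere is a rational equivalence but not an integral one. The correct route, available to you since you assume isomorphisms on integral $\pi_n$ for $n\le d$, is integral: Hurewicz plus finite generation gives an isomorphism on $H_{*}(\,\cdot\,;\Z)$ for $*\le d$, hence on $H^{*}(\,\cdot\,;\Z)$ for $*\le d$ by universal coefficients, hence on the whole cohomology ring since it is generated in degrees $\le d$, and then Whitehead applies.) The genuine gap is exactly the step you flag yourself: $N\E(X)\ge d$. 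As written it is a plan, not a proof, and the plan misidentifies both the map to be constructed and the mechanism that makes the obstructions vanish.

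Concretely: (a) the map you propose to rigidify does not exist integrally. $\widetilde{X}$ has no ``top factor''---only $\widetilde{X}_{(0)}$ splits as a product of rational odd spheres---so ``multiplication by $m$ on the top factor'' is not a self-map of $\widetilde{X}$ to begin with. The paper instead first builds an integral partial multiplication $\mu\colon\widetilde{X}\times S^{2n-1}\to\widetilde{X}$ extending $1+N\epsilon$ (possible because the $k$-invariants of an $H_0$-space have finite order), and downstairs builds $\bar{\rho}\colon X\to S^{2n-1}$ with $\bar{\rho}\circ q\simeq\underline{k}\circ\rho$ and $\bar{\mu}\colon X\times S^{2n-1}\to X$ extending $1+l(q\circ\epsilon)$, then takes $\bar{f}=\bar{\mu}\circ(1\times\bar{\rho})\circ\Delta$. (b) The hypotheses you invoke are not what kill the obstructions. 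Equivariant obstruction theory on the free $G$-complex $\widetilde{X}$ is literally obstruction theory on the quotient with twisted coefficients, so your route faces groups such as $H^{m+1}(X/G\times S^{2n-1},X/G\vee S^{2n-1};\underline{\pi_m(X/G)})$; finiteness of $G$ does not make these vanish, and the hypothesis $d(H^*(\widetilde{X};\Z))=d(H^*(\widetilde{X};\Q))$ is never used in the lower bound at all (it only serves, at the very end, to match the lower bound $d(H^*(\widetilde{X};\Q))$ with the upper bound $d(H^*(\widetilde{X};\Z))$). What actually makes the construction work: each obstruction class is \emph{linear} in an integer parameter (the degree of a composed self-map of $S^{2n-1}$, or the multiple $l$ of $q\circ\epsilon$) and has \emph{finite order}---because $\pi_m(X/G)\cong\pi_m(\widetilde{X})$ is finite for $m>2n-1$ (the rational homotopy of an $H_0$-space is concentrated in degrees $\le 2n-1$), because $H^*(C_q;\Q)=0$ by the triviality of the action, and because $\underline{m}\circ\alpha=m\alpha$ when $4\mid m$ (Hilton)---so replacing the parameter by a suitable multiple annihilates the obstruction at each stage. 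This rescaling is why the resulting map multiplies $q\circ\epsilon$ by $1+klM$ rather than by a prescribed integer $m$, and without this idea your induction over skeleta has no reason to go through.
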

	We will also prove a dual version of Theorem \ref{main}. Recall that a co-$H_0$-space is a simply-connected finite complex which is a co-$H$-space after rationalization. Recall that the cohomological dimension of a path-connected space $X$ is defined by
	\[
	cd(X)=\sup\{d\mid H^d(X;M)\ne 0\text{ for some }\Z\pi_1(X)\text{-module }M\}
	\]
	\begin{thm}
		\label{main co-H}
		Let $X$ be a finite complex such that $\pi_1(X)$ is finite and its universal covering space $\widetilde{X}$ is an co-${H}_0$-space. If $cd(X)=d(H^*(X;\mathbb{Q}))$ and $\pi_1(X)$ acts trivially on $H^*(\widetilde{X};\mathbb{Q})$, then
		\[
		N\mathcal{E}(X)=N\mathcal{E}(\widetilde{X})=cd(X).
		\]
	\end{thm}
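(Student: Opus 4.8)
The plan is to pin down a single integer $m$ through which all four quantities factor, establish a general upper bound $N\E(\,\cdot\,)\le m$ by a Postnikov-truncation argument, and then exploit the co-$H_0$ structure to produce a self-map realizing the matching lower bound $N\E(\,\cdot\,)\ge m$.

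First I would identify the invariants. Since $\pi_1(X)$ is finite and acts trivially on $H^*(\widetilde{X};\Q)$, the transfer gives $H^*(X;\Q)\cong H^*(\widetilde{X};\Q)$, so $d(H^*(X;\Q))=d(H^*(\widetilde{X};\Q))$. Because $\widetilde{X}$ is a co-$H_0$-space, all cup products of positive-degree rational classes vanish, so no positive class is decomposable and $d(H^*(\widetilde{X};\Q))$ is just the top degree $m$ in which $H^*(\widetilde{X};\Q)\neq 0$. For a simply-connected finite complex this top rational degree is at most the top integral homology degree, which by universal coefficients equals $cd(\widetilde{X})$; and Shapiro's lemma (using that $\pi_1(X)$ is finite, so induced equals coinduced) gives $cd(\widetilde{X})\le cd(X)$. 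Combining these with the hypothesis $cd(X)=d(H^*(X;\Q))$ sandwiches everything into a chain of equalities
\[
m=d(H^*(\widetilde{X};\Q))=cd(\widetilde{X})=cd(X),
\]
and in particular $\widetilde{X}$ has no homology torsion above degree $m$. This plays exactly the role that $d(H^*(\widetilde{X};\Z))=d(H^*(\widetilde{X};\Q))$ plays in Theorem~\ref{main}.

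For the upper bound I would argue first on the simply-connected $\widetilde{X}$. If a self-map $\widetilde{f}$ of $\widetilde{X}$ is an isomorphism on $\pi_n$ for every $n\le m$, then it induces a self-homotopy-equivalence of the $m$-th Postnikov section $P_m\widetilde{X}$, and since $\widetilde{X}\to P_m\widetilde{X}$ is $(m+1)$-connected, $\widetilde{f}$ is an isomorphism on $H^n(\widetilde{X};A)$ for all $n\le m$ and all coefficient groups $A$; as $cd(\widetilde{X})=m$ these groups vanish for $n>m$, so $\widetilde{f}$ is an isomorphism on all cohomology, hence on all homology, hence a homotopy equivalence. This yields $N\E(\widetilde{X})\le m$. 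For $X$ itself, a self-map $f$ that is an isomorphism on $\pi_n$ for $n\le m$ is in particular a $\pi_1$-isomorphism, so it lifts to such an $\widetilde{f}$; the previous step makes $\widetilde{f}$ a homotopy equivalence, and J.H.C. Whitehead's theorem then forces $f$ to be one as well, giving $N\E(X)\le m$.

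The lower bound is where the co-$H_0$ hypothesis is essential and where I expect the main difficulty. Rationally $\widetilde{X}$ is a wedge of spheres, so its top rational homology in degree $m$ is spherical, and the rational comultiplication lets me build a self-map $g$ of $\widetilde{X}$ that is the identity in degrees below $m$ and doubles a chosen top rational homology class; then $g$ is an isomorphism on $\pi_n$ for every $n\le m-1$ but is not an isomorphism on $H_m(\widetilde{X};\Q)$, so it is not a homotopy equivalence, whence $N\E(\widetilde{X})\ge m$. Carrying out the construction $\pi_1(X)$-equivariantly, which is possible because $\pi_1(X)$ is finite and acts trivially on the top rational homology, produces a self-map of $X$ with the same two properties and hence $N\E(X)\ge m$. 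The hard part is precisely the realization problem: promoting the merely rationally defined co-$H$ comultiplication to an honest self-map whose effect on the integral homotopy groups in degrees below $m$ is controlled, and then descending it equivariantly. This dualizes the construction underlying Theorem~\ref{main}, and it is exactly here that the no-torsion-above-$m$ consequence of $cd(X)=d(H^*(X;\Q))$, the finiteness of $\pi_1(X)$, and the triviality of its action are consumed. Combining the two bounds gives $N\E(X)=N\E(\widetilde{X})=cd(X)$.
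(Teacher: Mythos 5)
Your reduction of the four invariants to a single integer $m$ and your upper bound argument are both essentially sound, and they run parallel to the paper's own Lemma \ref{cd}, Proposition \ref{upper bound cohomology}, Lemma \ref{cohomology generator} and Corollary \ref{upper bound 3} (your Postnikov-truncation argument is a legitimate substitute for the paper's chain $N\E(X)\le N\E(\widetilde{X})\le N^*\E(\widetilde{X})\le d(H^*(\widetilde{X};\Z))$; note only that ``top integral homology degree equals $cd$'' can be off by one when the top homology has torsion, though the one-sided inequality you actually need still holds). The problem is the lower bound, which is the entire content of Section 6 of the paper: you leave it as a sketch and yourself flag that ``the hard part is precisely the realization problem,'' without carrying it out. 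Concretely, what is missing is the construction of a co-$H$-style self-map of $X$ itself (not of $\widetilde{X}$): the paper produces maps $\epsilon\colon S^n\to\widetilde{X}$ and $\rho\colon\widetilde{X}\to S^n$ with $\rho\circ\epsilon$ of positive degree (Proposition \ref{degree}), extends $\underline{k}\circ\rho$ to $\bar{\rho}\colon X\to S^n$ by obstruction theory on the mapping cone of the covering projection $q$ --- this is exactly where the trivial rational action kills the one potentially infinite-order obstruction and $cd=n$ kills the rest (Proposition \ref{rho X/G2}) --- and then lifts $1\times\bar{\rho}$ through the inclusion $X\vee S^n\to X\times S^n$ along a Moore--Postnikov tower with \emph{twisted} coefficients, the obstructions vanishing because the homotopy fiber is $(n-1)$-connected and $cd(X)=n$ (Lemma \ref{ss}, Proposition \ref{phi X/G}). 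The resulting self-map $\nabla\circ(1\vee q\circ\epsilon)\circ\bar{\phi}$ is an isomorphism on $\pi_*$ for $*<n$ but multiplies the infinite-order class $q\circ\epsilon\in\pi_n(X)$ by $1+kM$, giving $N\E(X)\ge n$. None of this is routine, and none of it appears in your proposal.

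Moreover, the route you do propose for this step --- build the self-map on $\widetilde{X}$ ``$\pi_1(X)$-equivariantly'' and descend to $X$ --- is not merely unexecuted but doubtful as stated. Triviality of the action on rational homology gives no equivariance of maps; a comultiplication-type map $\widetilde{X}\to\widetilde{X}\vee S^n$ carries no natural $G$-structure (a free action fixes no point, so the wedge point is already a problem); and descending a map to the quotient requires strict equivariance, not equivariance up to homotopy. The paper sidesteps this entirely by performing the obstruction theory \emph{downstairs} on $X$ with local coefficients, which is precisely where the hypotheses $cd(X)=d(H^*(X;\Q))$ and the triviality of the $\pi_1(X)$-action are consumed. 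To complete your proof you would need to either carry out that downstairs obstruction-theoretic construction or supply a genuinely new argument for the equivariant realization, and the latter is not available by the reasons just given.
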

	Our result in Theorem \ref{main} applies to various spaces, and we will see them in section 2.

	The paper is organized as follows. Section 2 provides various examples of non-simply-connected spaces to which Theorem \ref{main} applies. Section 3 gives an upper bound of the self-closeness number of a non-simply-connected space by its universal covering space. Section 4 recalls the $p$-universality of simply-connected spaces. We also recall and show the properties of (co-)$H_0$-spaces which we will use in following sections. In Section 5 and Section 6, we prove Theorem \ref{main} and \ref{main co-H} by applying the obstruction theory for non-simply-connected spaces. 
	
	\section{Examples}
	In this section, we give several examples of non-simply-connected spaces whose universal covering spaces are $H_0$-spaces. We apply Theorem \ref{main} to compute their self-closeness numbers. 
	\subsection{Topological spherical space forms}
	Recall that a topological spherical space form is the quotient space of a sphere $S^n$ under a free action of a finite group. In \cite{KO20} Kishimoto and Oda determined the monoid structure of $[X,X]$ for $X$ a topological spherical space form, and the following example is a simple corollary of their result.

	For a even dimensional sphere,  the only free action by a finite group on $S^n$ is the $\Z/2$-action and it is not orientation preserving, so we cannot apply either Theorem \ref{main} or Theorem \ref{main co-H}.

	For $n$ odd, notice that $S^n$ is an $H_0$-space. Furthermore, by \cite{KO20}, every orientation reserving self-map of $S^n$ has a fixed point by Lefschetz fixed point theorem, so all free actions of finite groups on $S^n$ are orientation preserving, hence the induced action on $H^*(S^n;\Q)$ is trivial.

	From the discussion above we deduce that 
	\begin{pro}
		For a finite group $G$ acts freely on $S^n$ with $n$ odd, we have 
		\[N\E(S^n/G)=N\E(S^n)=n\]
	\end{pro}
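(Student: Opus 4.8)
The plan is to deduce the proposition as a direct application of Theorem \ref{main}, so the work consists entirely of checking its hypotheses for $X=S^n/G$. First I would observe that, since $G$ acts freely on the simply-connected sphere $S^n$ with $n$ odd and at least $3$, the orbit space $S^n/G$ is a finite complex whose universal covering space is $\widetilde{X}=S^n$ and whose fundamental group is $\pi_1(S^n/G)\cong G$, which is finite. This settles the two structural hypotheses at once. (The degenerate case $n=1$ is separate and elementary, since every such quotient is homeomorphic to $S^1$, for which $N\E(S^1)=1=n$.)

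Next I would verify the cohomological hypotheses. An odd-dimensional sphere is an $H_0$-space, being a simply-connected finite complex whose rationalization $S^n_\Q\simeq K(\Q,n)$ is an $H$-space. Moreover $H^*(S^n;\Z)$ is generated by its classes in degrees $0$ and $n$, so that $d(H^*(S^n;\Z))=n$, and identically $d(H^*(S^n;\Q))=n$; hence the equality $d(H^*(S^n;\Z))=d(H^*(S^n;\Q))$ required by Theorem \ref{main} holds, with common value $n$.

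The single hypothesis carrying real content is that $\pi_1\cong G$ act trivially on $H^*(S^n;\Q)$, and I expect this to be the main (indeed essentially the only) point of substance. The argument is the one recalled in the discussion above: the Lefschetz number of a degree-$d$ self-map of an odd sphere equals $1-d$, so an orientation-reversing self-map (degree $-1$) has Lefschetz number $2\ne 0$ and therefore a fixed point. Since every non-trivial element of $G$ acts on $S^n$ freely, hence without fixed points, each must act by an orientation-preserving homeomorphism, inducing the identity on $H^n(S^n;\Q)$ and trivially on $H^0$; thus the $G$-action on $H^*(S^n;\Q)$ is trivial. With all hypotheses verified, Theorem \ref{main} yields $N\E(S^n/G)=N\E(S^n)=d(H^*(S^n;\Z))=n$, which completes the proof.
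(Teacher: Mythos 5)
Your proposal is correct and follows essentially the same route as the paper: verify that $S^n$ ($n$ odd) is an $H_0$-space, use the Lefschetz fixed point theorem to conclude that a free action must be orientation-preserving and hence trivial on $H^*(S^n;\Q)$, and then apply Theorem \ref{main}. Your write-up is simply a more detailed verification of the hypotheses (including the degenerate case $n=1$, which the paper leaves implicit).
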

	\subsection{Product of spheres} 
	A product of finitely many odd spheres \(X=S^{2 n_{1}+1} \times \cdots \times S^{2 n_{r}+1}\) for \(n_{i} \geq 1\) is an \(H_{0}\)-space as we will see in Theorem \ref{H_0} below. We show examples of free actions of finite groups on the product space \(X\), which are trivial in rational cohomology. Then we can apply Theorem \ref{main} to the quotient space.

	First, we consider a product of two spheres. Let \(\Sigma_{3}\) denote the symmetric group in three letters. Blaszczyk showed that \cite[Proposition 5.1]{Blaszczyk13} \(\Sigma_{3}\) acts freely on \(S^{m} \times S^{n}\) if and only if \(m\) or \(n\) is odd. Assume that $m=2k+1$, we explicitly give a free \(\Sigma_{3}\) -action on \(S^{2 k+1} \times S^{n}\). We regard \(S^{2 k+1}\) as the unit sphere in \(\mathbb{C}^{k+1}\). Then \(\mathbb{Z} / 3\) acts on \(S^{2 k+1}\) by
	
	\[
	x \mapsto e^{\frac{2 \pi \sqrt{-1}}{3}} x
	\]
	
	for \(x \in S^{2 k+1}\). We also regard \(S^{n}\) as the unit sphere in \(\mathbb{R}^{n+1}\). Then \(\mathbb{Z} / 2\) acts on both $S^{2k+1}$ and \(S^{n}\) antipodally. Now we recall that \(\Sigma_{3}\) has a presentation
	
	\[
	\Sigma_{3}=\left\langle a, b \mid a^{3}=b^{2}=(a b)^{3}=1\right\rangle
	\]
	
	Then we get an action of \(\Sigma_{3}\) on \(S^{2 k+1} \times S^{n}\) by combining the above actions of \(\mathbb{Z} / 3\) on \(S^{2 k+1}\) and \(\mathbb{Z} / 2\) on $S^{2k+1}\times S^n$ antipodally. Clearly, this action is free, and is trivial in rational cohomology whenever \(n\) is odd. Thus by Theorem \ref{main}, we obtain:
	
	\begin{pro}
		Let \(\Sigma_{3}\) acts on \(S^{m} \times S^{n}\) for \(m, n\) odd as above. Then
		
		\[
		N \mathcal{E}\left(\left(S^{m} \times S^{n}\right) / \Sigma_{3}\right)=N \mathcal{E}\left(S^{m} \times S^{n}\right)=\max \{m, n\}
		\]
	\end{pro}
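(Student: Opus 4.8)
The plan is to realize this proposition as a direct application of Theorem \ref{main} to $X=(S^m\times S^n)/\Sigma_3$, whose universal cover is $\widetilde{X}=S^m\times S^n$ (here $m,n\geq 3$, so that $\widetilde{X}$ is simply connected and the freeness of the $\Sigma_3$-action identifies $\pi_1(X)$ with $\Sigma_3$). Thus I would proceed by checking, one at a time, the four hypotheses of Theorem \ref{main}, after which the stated equalities follow immediately.

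The first two hypotheses are essentially immediate: $\pi_1(X)=\Sigma_3$ is finite, and since $m,n$ are odd the universal cover $\widetilde{X}=S^m\times S^n$ is a product of odd-dimensional spheres, hence an $H_0$-space by Theorem \ref{H_0}. For the dimension condition, I would invoke the Künneth theorem to identify $H^*(S^m\times S^n;R)$ with the exterior algebra $\Lambda_R(x_m,x_n)$ on generators of degrees $m$ and $n$, for $R=\Z$ and $R=\Q$ alike; as this cohomology is torsion-free, both $d(H^*(\widetilde{X};\Z))$ and $d(H^*(\widetilde{X};\Q))$ equal $\max\{m,n\}$, giving the required equality $d(H^*(\widetilde{X};\Z))=d(H^*(\widetilde{X};\Q))$.

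The one step demanding actual verification is the triviality of the $\Sigma_3$-action on $H^*(\widetilde{X};\Q)$, and this is where I would focus attention. Since the action on the exterior algebra is determined by its effect on the generators $x_m,x_n$, it suffices to check that the two presentation generators $a$ and $b$ act as the identity on the degree-$m$ and degree-$n$ classes; then the whole image of $\Sigma_3$ in $\mathrm{Aut}\,H^*(\widetilde{X};\Q)$ is trivial. The generator $a$ acts through the $\Z/3$-rotation $x\mapsto e^{2\pi\sqrt{-1}/3}x$ on $S^m$ (and trivially on $S^n$), which lies in the connected group $\mathrm{U}(k+1)$ and is therefore homotopic to the identity, so induces the identity in cohomology. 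The generator $b$ acts antipodally on each factor; the antipodal map on $S^d$ has degree $(-1)^{d+1}$, which equals $+1$ precisely because $m$ and $n$ are odd, so $b$ fixes $x_m$ and $x_n$ and hence acts trivially on all of $\Lambda_\Q(x_m,x_n)$.

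With all hypotheses in place, Theorem \ref{main} yields $N\E(X)=N\E(\widetilde{X})=d(H^*(\widetilde{X};\Z))=\max\{m,n\}$, completing the argument. I do not anticipate a genuine obstacle: the substance lies in the cohomological triviality check of the previous paragraph, and everything else is the bookkeeping needed to match the hypotheses of Theorem \ref{main}.
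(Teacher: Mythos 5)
Your proof is correct and takes essentially the same approach as the paper: the paper's ``proof'' consists precisely of the preceding discussion verifying the hypotheses of Theorem \ref{main} (freeness of the action, the $H_0$-property of a product of odd spheres, $d(H^*(\widetilde{X};\Z))=d(H^*(\widetilde{X};\Q))=\max\{m,n\}$, and triviality of the action on rational cohomology) followed by an appeal to that theorem. The only difference is that you spell out the cohomological triviality on generators --- $a$ via the connectedness of $\mathrm{U}(k+1)$ and $b$ via the degree $(-1)^{d+1}$ of the antipodal map --- where the paper simply asserts it with ``clearly.''
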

	Let $D_{2q}$ be the dihedral group of order $2q$. Then it has a presentation 
	\[D_{2q}=\left\langle a, b \mid a^{q}=b^{2}=1,(ab)^2=1\right\rangle.\]
	Then in particular, $D_{6}=\Sigma_{3}$. We can easily generalize the above free action of $\Sigma_{3}$ on $S^{2k+1}\times S^n$ to a free action of $D_{2q}$ on $S^{2k+1}\times S^n$. Then we get: 
	\begin{cor}
		Consider the above free action of $D_{2q}$ on $S^{m}\times S^n$ for $m,n$ odd. Then 
		\[N\E((S^m\times S^n)/D_{2q})=N\E(S^m\times S^n)=\max\{m,n\}\]
	\end{cor}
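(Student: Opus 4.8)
The plan is to obtain the corollary as a direct application of Theorem~\ref{main} to $X=(S^m\times S^n)/D_{2q}$, so that everything reduces to checking the three hypotheses of that theorem for the explicit dihedral action. The cohomological input is immediate: by the Künneth theorem, for $m,n$ odd the ring $H^*(S^m\times S^n;R)$ is an exterior algebra on two generators of degrees $m$ and $n$ over any coefficient ring $R$, so
\[
d\bigl(H^*(S^m\times S^n;\Z)\bigr)=d\bigl(H^*(S^m\times S^n;\Q)\bigr)=\max\{m,n\}.
\]
This verifies the degree hypothesis $d(H^*(\widetilde X;\Z))=d(H^*(\widetilde X;\Q))$ and identifies the target value as $\max\{m,n\}$. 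Moreover $\widetilde X=S^m\times S^n$ is a simply-connected finite complex which is an $H_0$-space by Theorem~\ref{H_0}, and $\pi_1(X)=D_{2q}$ is finite exactly because the constructed action is free. Thus, granting freeness and the triviality of the action on rational cohomology, Theorem~\ref{main} immediately gives $N\E(X)=N\E(\widetilde X)=\max\{m,n\}$.

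Next I would check that the dihedral action generalizing the $\Sigma_3$-action is free. Realizing the factor carrying the rotation as the unit sphere $S^{2k+1}\subset\C^{k+1}$, the generator $a$ acts by multiplication by $\zeta=e^{2\pi\sqrt{-1}/q}$ there and trivially on the other factor, while $b$ acts by the same involution as in the case $q=3$ on the first factor and antipodally on the second; a routine check of $a^q=b^2=(ab)^2=1$ shows this is a genuine $D_{2q}$-action. Its nonidentity elements are the rotations $a^i$ with $1\le i\le q-1$ and the reflections $a^i b$ with $0\le i\le q-1$. A rotation $a^i$ acts on the first factor by multiplication by $\zeta^i\neq1$ and hence has no fixed point, and a reflection $a^i b$ acts antipodally on the second factor and hence has no fixed point either; so every nonidentity element acts freely, exactly as when $q=3$.

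The remaining and most delicate hypothesis is that $D_{2q}$ acts trivially on $H^*(\widetilde X;\Q)$. Since this algebra is generated by the two fundamental classes of the sphere factors, it suffices to compute the degree of each group element on each factor. The rotations act on the first factor through the path-connected group of unitary rotations and trivially on the second, hence induce the identity; for the reflections, the antipodal map on the odd sphere $S^n$ has degree $(-1)^{n+1}=+1$, so that only the degree of the involution on the first factor remains, and this is handled by exactly the computation of the case $q=3$. This orientation bookkeeping on the two sphere factors — which is precisely where the oddness of $m$ and $n$ enters — is the main obstacle; once it is settled, all hypotheses of Theorem~\ref{main} hold and the stated equalities follow.
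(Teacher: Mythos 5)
Your overall route is exactly the paper's: the paper's entire proof of this corollary is the assertion that the $\Sigma_3$-action ``easily generalizes'' to $D_{2q}$, followed by an application of Theorem \ref{main}, and your Künneth computation, the identification of $S^m\times S^n$ as an $H_0$-space via Proposition \ref{H_0}, and the finiteness of $\pi_1$ are all correct and match what the paper does for the $\Sigma_3$ case. The gap is in the step you call ``a routine check of $a^q=b^2=(ab)^2=1$'': for the maps you wrote down, this check fails. On $S^{2k+1}\subset\C^{k+1}$ the antipodal map is scalar multiplication by $-1\in\C$, so it commutes with scalar multiplication by $\zeta=e^{2\pi\sqrt{-1}/q}$; on the other factor $a$ acts trivially, so $a$ and $b$ commute on the product. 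Hence $bab^{-1}=a\neq a^{-1}$, equivalently $(ab)^2=a^2b^2=a^2\neq 1$ for $q\ge 3$, and no $D_{2q}$-action has been defined. What your generators actually generate is a free action of the abelian group $\Z/q\times\Z/2$ (cyclic of order $2q$ when $q$ is odd); your freeness and degree computations are correct for those maps, but they concern the wrong group, so the quotient you analyze has cyclic fundamental group and is not the dihedral quotient in the statement.

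The gap is substantive, not notational. To get genuine dihedral relations you need an involution of $S^{2k+1}$ conjugating $\zeta$-multiplication to $\bar\zeta$-multiplication; the natural candidates are complex conjugation $x\mapsto\bar x$, or more generally $x\mapsto C\bar x$ with $C\in U(k+1)$, and these do give a free $D_{2q}$-action (reflections still act antipodally on $S^n$). But every such involution has degree $(-1)^{k+1}$ on $S^{2k+1}$, so for $m=2k+1\equiv 1\pmod 4$ the element $b$ acts by $-1$ on $H^m(S^m\times S^n;\Q)$ and the triviality hypothesis of Theorem \ref{main} fails; in particular your claim that the first factor ``is handled by exactly the computation of the case $q=3$'' (where the involution was antipodal, of degree $+1$) does not transfer to the corrected involution. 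So a correct proof needs either a non-linear involution or a genuinely different construction of the action. You should also be aware that you inherited this defect faithfully: the paper's own $\Sigma_3$-action (rotation by $e^{2\pi\sqrt{-1}/3}$ on one factor combined with the antipodal involution) commutes in exactly the same way, so the paper's construction, and hence its one-line proof of this corollary, has the same hole; closing it requires importing an actual free, rationally trivial dihedral action (e.g., from the cited work of Błaszczyk) rather than the asserted easy generalization.
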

	
	Next we consider a product of spheres $X=S^{2k+1}\times S^{2k+3}\times\cdots\times S^{2n-1}$ for $k\le n$. Suppose a finite subgroup $G\subset U(n)$ of order $m$ acts freely on $U(n)/U(k)$. If $m$ is prime to $(n-1)!$, Adem, Davis and \"{U}nl\"u \cite[Theorem 1.4]{ADU04} proved that $G$ acts freely and smoothly on $X=S^{2k+1}\times S^{2k+3}\times\cdots\times S^{2n-1}$. In addition, it acts trivially on $H_*(X;\Z[\frac{1}{m}])$, where $\Z[\frac{1}{m}]$ is a subring of $\Q$ consisting of rational numbers whose denominators are power of $m$. This implies that induced action of $G$ on $H^*(X;\Q)$ is trivial too. A example of such a finite group is given explicitly by Adem, Davis and \"Unl\"u in \cite[Theorem 1.5]{ADU04}. By applying Theorem \ref{main}, we obtain:  
	\begin{pro} 
		Let $G$ and $X$ be as above. Then
		\[N\E(X/G)=N\E(X)=2n-1\] 
	\end{pro}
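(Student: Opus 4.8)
The plan is to recognize this proposition as a direct application of Theorem \ref{main}, so that the work lies entirely in checking its hypotheses for the pair $(X/G,X)$ and in evaluating the resulting degree. Since $G$ acts freely on $X$, and each factor $S^{2i+1}$ is an odd sphere of dimension at least $3$ so that $X$ is simply-connected, the projection $X\to X/G$ is the universal covering and $\pi_1(X/G)\cong G$, which is finite of order $m$. Moreover $X/G$ inherits a finite CW structure from the free action of the finite group $G$ on the finite complex $X$. Thus $X/G$ is a finite complex whose fundamental group is finite and whose universal covering space is exactly the given $X$.

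Next I would verify that $X$ is an $H_0$-space and compute the relevant degrees. By Theorem \ref{H_0} below, the product of odd spheres $X$ is an $H_0$-space. Its integral cohomology is the exterior algebra
\[
H^*(X;\Z)\cong\Lambda(x_{2k+1},x_{2k+3},\dots,x_{2n-1}),\qquad \deg x_{2i+1}=2i+1,
\]
which is torsion-free; consequently $H^*(X;\Q)$ is the same exterior algebra with rational coefficients and has the same set of generators. Hence $d(H^*(X;\Z))=d(H^*(X;\Q))=2n-1$, the maximal degree of a generator, and in particular the degree hypothesis $d(H^*(\widetilde{X};\Z))=d(H^*(\widetilde{X};\Q))$ of Theorem \ref{main} holds.

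It remains to confirm the triviality of the action on rational cohomology and then invoke the theorem. By the result of Adem, Davis, and \"Unl\"u recalled above, $G$ acts trivially on $H_*(X;\Z[\tfrac{1}{m}])$; tensoring with $\Q$ shows that $G=\pi_1(X/G)$ acts trivially on $H_*(X;\Q)$, and hence on $H^*(X;\Q)$. All hypotheses of Theorem \ref{main} are now met, so
\[
N\E(X/G)=N\E(X)=d(H^*(X;\Z))=2n-1.
\]
I do not expect a genuine obstacle here, since the proposition is engineered as a corollary of Theorem \ref{main}; the only points requiring care are the bookkeeping that the $\Z[\tfrac{1}{m}]$-triviality supplied by Adem--Davis--\"Unl\"u indeed descends to $\Q$-coefficients, and the observation that torsion-freeness of $H^*(X;\Z)$ forces the integral and rational generating degrees to coincide.
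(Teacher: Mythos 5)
Your proposal is correct and follows essentially the same route as the paper: the paper derives this proposition directly from the Adem--Davis--\"Unl\"u result (freeness of the action and triviality on $H_*(X;\Z[\tfrac{1}{m}])$, which passes to $H^*(X;\Q)$) combined with Theorem \ref{main}, exactly as you do. Your write-up merely makes explicit the routine hypothesis checks (simple connectivity of $X$, finiteness of $X/G$, the $H_0$-property via Proposition \ref{H_0}, and $d(H^*(X;\Z))=d(H^*(X;\Q))=2n-1$ from torsion-freeness of the exterior algebra) that the paper leaves implicit.
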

	The main theorem in \cite{DM91} proved by Davis and Milgram also provides various examples of free actions of finite groups on $H_0$-spaces such that we could apply Theorem \ref{lower bound H_0} and Corollary \ref{upper bound 3}.

	\subsection{Lie groups}
	We consider a finite subgroup $G$ of $SU(n)$. Then $G$ acts freely on $SU(n)$ by the multiplication, and since this action factors through the path-connected space $SU(n)$, it induced action in rational cohomology is trivial. On the other hand, since $H^*(SU(n);\Z)=\Lambda_{\Z}(x_3,...,x_{2n-1})$ where $|x_i|=i$, we have $d(H^*(SU(n);\Z))=d(H^*(SU(n);\Q))=2n-1$. Then by applying Theorem \ref{main}, we have: 
	\begin{pro}
		For any finite subgroup $G\subset SU(n)$ acts on $SU(n)$ by the multiplication, we have 
		\[N\E(SU(n)/G)=N\E(SU(n))=2n-1\]
	\end{pro}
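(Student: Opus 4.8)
The plan is to realize this proposition as a direct application of Theorem \ref{main}, so that the entire task reduces to checking its four hypotheses for $X=SU(n)/G$ and then reading off the conclusion.

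First I would identify the universal covering space. Since $SU(n)$ is a compact, simply-connected Lie group, it is a finite CW complex with trivial fundamental group, and the free action of the finite group $G$ by multiplication makes the quotient map $SU(n)\to SU(n)/G$ a covering projection with deck transformation group $G$. Hence $\widetilde{X}=SU(n)$ and $\pi_1(X)\cong G$ is finite, as required by the first hypothesis.

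Next I would verify that $\widetilde{X}=SU(n)$ is an $H_0$-space and record its cohomology. As a topological group it is an $H$-space, and being a simply-connected finite complex it is in particular an $H_0$-space. From $H^*(SU(n);\Z)=\Lambda_{\Z}(x_3,\dots,x_{2n-1})$ with $|x_i|=i$, the same generators survive rationally, so $d(H^*(SU(n);\Z))=d(H^*(SU(n);\Q))=2n-1$. This simultaneously supplies the degree hypothesis $d(H^*(\widetilde{X};\Z))=d(H^*(\widetilde{X};\Q))$ and the value the theorem predicts.

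The only point requiring a genuine argument is the triviality of the $\pi_1(X)$-action on $H^*(\widetilde{X};\Q)$. Here the deck transformations are the left translations $L_g\colon SU(n)\to SU(n)$ for $g\in G$. Since $SU(n)$ is path-connected, choosing a path from $g$ to the identity yields a homotopy from $L_g$ to the identity map, so each $L_g$ induces the identity on $H^*(SU(n);\Q)$, and thus $G$ acts trivially. With all hypotheses confirmed, Theorem \ref{main} immediately gives $N\E(SU(n)/G)=N\E(SU(n))=d(H^*(SU(n);\Z))=2n-1$. I expect no serious obstacle: the proof is a clean verification, the subtlest step being the homotopy-to-identity observation for left translations on a connected Lie group, which is standard.
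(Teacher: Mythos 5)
Your proposal is correct and follows essentially the same route as the paper: the paper likewise verifies the hypotheses of Theorem \ref{main} by citing $H^*(SU(n);\Z)=\Lambda_{\Z}(x_3,\dots,x_{2n-1})$ and by noting that the $G$-action factors through the path-connected space $SU(n)$, which is precisely your observation that each left translation $L_g$ is homotopic to the identity along a path from $g$ to $e$. Your write-up merely spells out these standard verifications in more detail.
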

	Now we generalize this result to the complex Stiefel manifold $V_k(\C^n)=U(n)/U(n-k)$. Consider a finite subgroup $G$ of $U(n)$. Then $G$ acts on $U(n)/U(n-k)$ by the multiplication, and since this action factors through the path-connected space $U(n)$, its induced action in rational cohomology is trivial. Furthermore, such an action is free if and only if for any $M\in U(n)$, $M^{-1}GM$ intersects $U(n-k)$ trivially in $U(n)$. On the other hand, since $H^*(V_k(\C^n))=\Lambda_{\Z}(x_{2(n-k)+1},x_{2(n-k)+3},...,x_{2n-1})$ where $|x_i|=i$, $d(H^*(V_k(\C^n);\Z))=d(H^*(V_k(\C^n);\Q))=2n-1$. Thus we have:  
	
	\begin{pro}
		For any finite subgroup $G\subset U(n)$ such that $M^{-1}GM$ intersects $U(n-k)$ trivially in $U(n)$ for any $M\in U(n)$, we have  
		\[N\E(V_k(\C^n)/G)=N\E(V_k(\C^n))=2n-1.\]
	\end{pro}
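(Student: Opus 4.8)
The final statement asserts that for a finite subgroup $G \subset U(n)$ acting freely on the complex Stiefel manifold $V_k(\C^n) = U(n)/U(n-k)$ (via the left multiplication condition that $M^{-1}GM \cap U(n-k)$ is trivial for all $M$), we have $N\E(V_k(\C^n)/G) = N\E(V_k(\C^n)) = 2n-1$. My plan is to verify that the hypotheses of Theorem \ref{main} are met for $X = V_k(\C^n)/G$, whose universal cover is $\widetilde{X} = V_k(\C^n)$, and then simply invoke that theorem. The three things to check are: (i) $\pi_1(X)$ is finite and $\widetilde{X}$ is an $H_0$-space; (ii) the degree equality $d(H^*(\widetilde{X};\Z)) = d(H^*(\widetilde{X};\Q))$; and (iii) $\pi_1(X) = G$ acts trivially on $H^*(\widetilde{X};\Q)$.

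First I would confirm that the action is genuinely free and that $\widetilde{X} = V_k(\C^n)$ is the universal cover. Freeness is exactly the hypothesis $M^{-1}GM \cap U(n-k) = \{1\}$: a point $MU(n-k)$ is fixed by $g \in G$ precisely when $gM U(n-k) = M U(n-k)$, i.e. $M^{-1}gM \in U(n-k)$, so freeness is equivalent to this intersection being trivial. Since $V_k(\C^n)$ is simply-connected (it is $(2(n-k))$-connected, as its bottom cohomology sits in degree $2(n-k)+1$), the quotient by the free finite $G$-action has $\pi_1 = G$ finite, and $V_k(\C^n)$ is its universal cover. That $V_k(\C^n)$ is an $H_0$-space is supplied by Theorem \ref{H_0}: rationally it is a product of odd spheres $S^{2(n-k)+1} \times \cdots \times S^{2n-1}$, since $H^*(V_k(\C^n);\Q) = \Lambda_\Q(x_{2(n-k)+1}, \dots, x_{2n-1})$ is an exterior algebra on odd generators.

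For the degree condition, the integral cohomology $H^*(V_k(\C^n);\Z) = \Lambda_\Z(x_{2(n-k)+1}, x_{2(n-k)+3}, \dots, x_{2n-1})$ is itself an exterior algebra over $\Z$ with the same generators as the rational one, so both $d(H^*(\widetilde{X};\Z))$ and $d(H^*(\widetilde{X};\Q))$ equal the top generator degree $2n-1$; in particular they coincide. For the triviality of the $G$-action on rational cohomology, I would use the argument already indicated in the preceding Stiefel/$SU(n)$ discussion: each $g \in G$ acts as left multiplication by an element of the path-connected group $U(n)$, so the induced self-map of $V_k(\C^n)$ is homotopic to the identity through the path connecting $g$ to the identity in $U(n)$. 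Hence every $g$ induces the identity on $H^*(V_k(\C^n);\Q)$, and the $G$-action is trivial.

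With (i)--(iii) established, Theorem \ref{main} applies directly and yields $N\E(V_k(\C^n)/G) = N\E(V_k(\C^n)) = d(H^*(V_k(\C^n);\Z)) = 2n-1$. The main subtlety, and essentially the only nonroutine point, is the verification that the $G$-action is trivial in rational cohomology: one must be careful that the relevant self-maps are left translations by elements of a path-connected Lie group and therefore nullhomotopic to the identity. Everything else is bookkeeping about the exterior-algebra structure of $H^*(V_k(\C^n))$ and the standard covering-space identification of $\pi_1$, so I expect no further obstacle once the homotopy-triviality of the translations is in hand.
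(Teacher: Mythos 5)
Your proposal is correct and follows essentially the same route as the paper: freeness is identified with the conjugation condition $M^{-1}GM\cap U(n-k)=\{1\}$, the $G$-action is trivial on $H^*(V_k(\C^n);\Q)$ because left translations factor through the path-connected group $U(n)$, and the degree equality $d(H^*(V_k(\C^n);\Z))=d(H^*(V_k(\C^n);\Q))=2n-1$ follows from the exterior algebra $\Lambda_{\Z}(x_{2(n-k)+1},\dots,x_{2n-1})$, after which Theorem \ref{main} applies. The only difference is that you spell out the verification of the hypotheses (simple-connectivity, the $H_0$-property via the rational exterior algebra on odd generators) in more detail than the paper, which treats them as immediate.
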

	All of the above results hold if we replace $U(n)$ with $Sp(n)$. 
	\section{Upper bounds}
	
	In this section, we give an upper bound of the self-closeness number of a non-simply connected space by that of its universal covering space. Then we give a cohomological upper bound for the self-closeness number of a simply-connected space.

	First, we give an upper bound of the self-closeness number of a non-simply connected space.

	\begin{pro}
		\label{upper bound 1}
		Let $X\to Y$ be a covering over a path-connected base $Y$. Then
		\[
		N\mathcal{E}(Y)\le\max\{N\mathcal{E}(X),1\}.
		\]
	\end{pro}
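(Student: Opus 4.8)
The plan is to set $k=\max\{N\E(X),1\}$ and to verify that $k$ is a valid closeness number for $Y$: if a self-map $f\colon Y\to Y$ induces an isomorphism on $\pi_n(Y)$ for every $n\le k$, then $f$ is a homotopy equivalence. By J.H.C. Whitehead's first theorem it suffices to show that $f_*$ is an isomorphism on $\pi_n(Y)$ for \emph{all} $n$. Since $k\ge 1$, the map $f_*$ is already an automorphism of $\pi:=\pi_1(Y)$, and for $n\ge 2$ the covering $p\colon X\to Y$ induces isomorphisms $p_*\colon\pi_n(X)\xrightarrow{\ \cong\ }\pi_n(Y)$. Thus the higher homotopy of $Y$ is entirely visible on $X$, and the idea is to transport the question to a self-map of $X$ so that the hypothesis $N\E(X)\le k$ can be applied.

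Write $H=p_*\pi_1(X)\le\pi$. The lifting criterion says that $f\circ p$ lifts to a self-map of $X$ exactly when $f_*(H)\subseteq H$, and this is the main obstacle: for a general covering $f_*$ need not preserve $H$, so $f$ itself need not lift to a self-map of $X$. I would overcome this by passing to a power. Since $f_*$ is an automorphism of $\pi$, it permutes the subgroups of $\pi$ of a fixed index, and in the cases of interest there are only finitely many such subgroups (when $\pi$ is finite, or when $Y$ is a finite complex and the covering is finite-sheeted, so that $\pi$ is finitely generated); hence $f_*^N(H)=H$ for some $N\ge 1$. Choosing basepoints appropriately, lift $f^N\circ p$ to a self-map $g\colon X\to X$ with $p\circ g=f^N\circ p$. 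On $\pi_n$ for $n\ge 2$ this gives $g_*=p_*^{-1}\,f_*^N\,p_*$, which is an isomorphism for $2\le n\le k$ because $f_*$ is; on $\pi_1(X)\cong H$ the map $g_*$ is the restriction of $f_*^N$, an isomorphism because $f_*^N(H)=H$. Therefore $g$ induces isomorphisms on $\pi_n(X)$ for every $n\le k$, and $N\E(X)\le k$ forces $g$ to be a homotopy equivalence.

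It then remains to descend from the power back to $f$. Reading $g_*=p_*^{-1}\,f_*^N\,p_*$ backwards shows that $f_*^N$ is an isomorphism on $\pi_n(Y)$ for all $n\ge 2$, and it is trivially an isomorphism on $\pi_1(Y)$ since $f_*$ is; hence $f_*^N$ is bijective on every $\pi_n(Y)$. But if $f_*^N$ is bijective on the group $\pi_n(Y)$ then $f_*$ is both injective and surjective there, so $f_*$ is an isomorphism on $\pi_n(Y)$ for all $n$. By Whitehead's first theorem $f$ is a homotopy equivalence, which gives $N\E(Y)\le k=\max\{N\E(X),1\}$. The only delicate points in carrying this out are the basepoint bookkeeping in the lifting criterion and the existence of the exponent $N$, the latter reducing to the finiteness of the orbit of $H$ under $f_*$ and being immediate in the finite $\pi_1$ setting relevant to the applications.
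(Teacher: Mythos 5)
Your proposal has the same skeleton as the paper's proof: transfer the hypothesis on $f\colon Y\to Y$ to a self-map of the covering space via the isomorphisms $p_*\colon \pi_n(X)\to\pi_n(Y)$ for $n\ge 2$, apply $N\E(X)$ there to get a homotopy equivalence, and descend to conclude that $f$ is an isomorphism on all homotopy groups, hence a homotopy equivalence by Whitehead's theorem. The genuine difference is in how the self-map of $X$ is produced. The paper simply writes ``let $\tilde{f}$ denote a lift of $f$''; this silently requires $f_*(H)$ to be contained in (a conjugate of) $H:=p_*\pi_1(X)$, and the subsequent use of the lift being an isomorphism on $\pi_1(X)$ effectively requires $f_*(H)=H$. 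You identify exactly this obstacle and circumvent it by lifting a power $f^N$ chosen so that $f_*^N(H)=H$, then descending from $f^N$ back to $f$ via the correct algebraic observation that if $f_*^N$ is bijective on $\pi_n(Y)$ then so is $f_*$. On this one point you are more careful than the paper itself.

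The cost is that the existence of the exponent $N$ rests on the finiteness of the orbit of $H$ under $f_*$, which you justify only when $\pi_1(Y)$ is finite, or when the covering is finite-sheeted and $\pi_1(Y)$ is finitely generated. These hypotheses are not in the statement, so your argument (like the paper's, which asserts the lift with no justification at all) does not establish the proposition for an arbitrary covering of an arbitrary path-connected base: for an infinite-sheeted covering, or a non-finitely-generated fundamental group, neither the finite orbit nor the asserted lift is available. In the paper's actual application of this proposition (Corollary \ref{upper bound 3}) the covering is the universal covering, so $H=1$, the lift of $f$ itself exists for free and is automatically an isomorphism on $\pi_1$; that is the setting in which the paper's three-line proof is sound, and there your argument reduces to it with $N=1$.
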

	
	\begin{proof}
		Suppose that a map $f\colon Y\to Y$ is an isomorphism in $\pi_1$. Let $\tilde{f}\colon X\to X$ denote a lift of $f$. Suppose that $\tilde{f}$ is an isomorphism in $\pi_*$ for $*\le n$. Then by the naturality of the homotopy exact sequence, $f$ is also an isomorphism in $\pi_*$ for $*\le\max\{n,1\}$, completing the proof.
	\end{proof}

	

	\begin{cor}
		\label{upper bound 3}
		Suppose that a path-connected space $X$ is of finite dimension and $\pi_1(X)$ is finite. Then
		\[
		N\mathcal{E}(X)\le N\mathcal{E}(\widetilde{X})
		\]
		where $\widetilde{X}$ denotes the universal cover of $X$.
	\end{cor}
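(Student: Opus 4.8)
The plan is to deduce this directly from Proposition \ref{upper bound 1} applied to the universal covering $\widetilde{X}\to X$, which immediately yields $N\mathcal{E}(X)\le\max\{N\mathcal{E}(\widetilde{X}),1\}$. The only discrepancy between this and the asserted inequality is the term $1$ in the maximum, which matters precisely when $N\mathcal{E}(\widetilde{X})=0$. Thus the entire task reduces to showing that this degenerate case cannot occur under the stated hypotheses, i.e. that $N\mathcal{E}(\widetilde{X})\ge 1$ whenever $\pi_1(X)$ is nontrivial. When $\pi_1(X)$ is trivial there is nothing to prove, since then $X=\widetilde{X}$ and the inequality holds with equality.

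First I would unwind the meaning of $N\mathcal{E}(\widetilde{X})=0$. Because $\widetilde{X}$ is path-connected, every self-map is automatically an isomorphism on $\pi_0$, so $N\mathcal{E}(\widetilde{X})=0$ would assert that \emph{every} self-map of $\widetilde{X}$ is a homotopy equivalence. Taking the constant map, this forces $\widetilde{X}$ to be contractible. It therefore suffices to prove that $\widetilde{X}$ is not contractible when $\pi_1(X)$ is a nontrivial finite group. The key observation is that if $\widetilde{X}$ were contractible, then $X$ would be aspherical, hence a finite-dimensional $K(\pi_1(X),1)$; but a group admitting a finite-dimensional classifying space has finite cohomological dimension and is consequently torsion-free, whereas a nontrivial finite group has torsion. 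This contradiction rules out contractibility of $\widetilde{X}$, so $N\mathcal{E}(\widetilde{X})\ge 1$.

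Combining the two steps, the maximum in Proposition \ref{upper bound 1} equals $N\mathcal{E}(\widetilde{X})$, which gives the claim. The main obstacle is the last step, namely the passage from finite dimensionality of $X$ to non-contractibility of its universal cover; this rests on the classical fact that a nontrivial finite group, having torsion, cannot possess a finite-dimensional classifying space. Once this is in place the rest is a formal consequence of the previously established covering-space bound.
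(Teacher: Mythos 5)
Your proposal is correct and follows essentially the same route as the paper: apply Proposition \ref{upper bound 1} to the universal covering, then rule out the degenerate case by showing $\widetilde{X}$ cannot be contractible, since otherwise $X$ would be a finite-dimensional $K(\pi_1(X),1)$ for a nontrivial finite group, which is impossible. Your write-up is in fact slightly more careful than the paper's, as you separate off the trivial-$\pi_1$ case explicitly and justify the infinite-dimensionality of $K(G,1)$ via torsion and cohomological dimension, where the paper simply asserts it.
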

	
	\begin{proof}
		By Proposition \ref{upper bound 1}, it is sufficient to show $N\mathcal{E}(\widetilde{X})\ne 0$, or equivalently, $\widetilde{X}$ is not weakly contractible. Indeed, if $\widetilde{X}$ is weakly contractible, then $X$ is of the homotopy type of $K(\pi_1(X),1)$. Since $\pi_1(X)$ is finite, $K(\pi_1(X),1)$ is of infinite dimension, a contradiction.
	\end{proof}

	Oda and Yamaguchi \cite{OY20} introduced the homological self-closeness number $N_*\mathcal{E}(X)$ by replacing the homotopy groups in the definition of the self-closeness number with homology. It is immediate from the J.H.C. Whitehead theorem that
	\begin{equation}
		\label{homotopy vs homology}
		N\mathcal{E}(X)=N\mathcal{E}_*(X)
	\end{equation}
	whenever $X$ is simply-connected and of finite type. In particular, if $X$ is a simply-connected finite complex, then $N_*\mathcal{E}(X)$ is finite, implying $N\mathcal{E}(X)$ is finite too. By Proposition \ref{upper bound 1}, we can generalize this finiteness of the self-closeness number to a non-simply connected space.
	
	\begin{cor}
		If the universal covering space of a path-connected space $X$ is a finite complex, then $N\mathcal{E}(X)$ is a finite integer.
	\end{cor}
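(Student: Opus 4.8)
The plan is to reduce the statement to the already-established finiteness of the self-closeness number for simply-connected finite complexes, and then to transport this finiteness down the universal covering by means of Proposition \ref{upper bound 1}. The key observation is that the hypothesis supplies exactly the object to which the simply-connected theory applies, so almost all of the work has already been done.

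First I would note that the universal covering space $\widetilde{X}$ is, by construction, simply-connected, and by hypothesis it is a finite complex. In particular $\widetilde{X}$ is simply-connected and of finite type, so by \eqref{homotopy vs homology} we have $N\mathcal{E}(\widetilde{X})=N_*\mathcal{E}(\widetilde{X})$. Since a simply-connected finite complex has finite homological self-closeness number (as recalled just above, this follows from the J.H.C. Whitehead theorem), $N\mathcal{E}(\widetilde{X})$ is a finite integer.

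Next I would apply Proposition \ref{upper bound 1} to the universal covering $\widetilde{X}\to X$, in which the base $X$ is path-connected by assumption. The proposition then yields
\[
N\mathcal{E}(X)\le\max\{N\mathcal{E}(\widetilde{X}),1\}.
\]
Because $N\mathcal{E}(\widetilde{X})$ is finite, the right-hand side is a finite integer, and hence so is $N\mathcal{E}(X)$, which is what we want.

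There is no genuine obstacle here: all the substantive content lives in the finiteness statement for simply-connected finite complexes, which is already in hand, and in Proposition \ref{upper bound 1}, whose proof appears above. The only point worth verifying is that the universal cover is automatically simply-connected, so that the simply-connected theory does apply to $\widetilde{X}$. I would also remark that, unlike in Corollary \ref{upper bound 3}, no finiteness of $\pi_1(X)$ is needed, since Proposition \ref{upper bound 1} is valid for an arbitrary covering over a path-connected base.
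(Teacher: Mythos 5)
Your proof is correct and follows exactly the route the paper intends: finiteness of $N\mathcal{E}(\widetilde{X})$ for the simply-connected finite complex $\widetilde{X}$ via \eqref{homotopy vs homology}, then Proposition \ref{upper bound 1} applied to the universal covering $\widetilde{X}\to X$ to bound $N\mathcal{E}(X)\le\max\{N\mathcal{E}(\widetilde{X}),1\}$. Your closing remark is also accurate: unlike Corollary \ref{upper bound 3}, no finiteness of $\pi_1(X)$ is needed here, since the $\max\{\cdot,1\}$ already absorbs the degenerate case.
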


	Recall from \cite{OY20} that for a space $X$, the cohomological self-closeness number $N^*\mathcal{E}(X)$ is defined to be the least integer $n$ such that a self-map $f\colon X\to X$ is an isomorphism in cohomology whenever it is an isomorphism in cohomology of dimension $\le n$. Now we give an upper bound if the self-closeness number by the cohomological self-closeness number \cite[Theorem 41]{OY20}.

	\begin{pro}
		\label{upper bound cohomology}
		Let $X$ be a path-connected space and of finite type. Then
		\[
		N\mathcal{E}(X)\le N^*\mathcal{E}(X).
		\]
	\end{pro}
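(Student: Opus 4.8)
The plan is to turn the low-dimensional homotopy hypothesis into a low-dimensional cohomology hypothesis via the relative Hurewicz theorem, to feed this into the defining property of $N^*\mathcal{E}(X)$ so as to obtain an isomorphism in all cohomological dimensions, and finally to invoke a Whitehead-type theorem to conclude that the map is a homotopy equivalence. Write $m=N^*\mathcal{E}(X)$; we may assume $m<\infty$, since otherwise there is nothing to prove. Let $f\colon X\to X$ be a self-map that is an isomorphism on $\pi_n(X)$ for every $n\le m$. It suffices to prove that $f$ is a homotopy equivalence, for then $m$ satisfies the condition defining $N\mathcal{E}(X)$, whence $N\mathcal{E}(X)\le m$.

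First I would replace $f$ by the inclusion $X\hookrightarrow M_f$ into its mapping cylinder, so that $M_f\simeq X$ realizes the target and the inclusion models $f$. The long exact homotopy sequence of the pair $(M_f,X)$, combined with the hypothesis that $f_*$ is an isomorphism on $\pi_n$ for $n\le m$, shows that $(M_f,X)$ is $m$-connected; in particular $f$ is an isomorphism on $\pi_1$, so it lifts to the universal covers. Passing to the simply connected pair of universal covers and applying the relative Hurewicz theorem makes the relative integral chain complex acyclic through dimension $m$; as it is a complex of free $\Z\pi_1(X)$-modules, a standard change-of-rings argument then gives $H^k(M_f,X;G)=0$ for every $k\le m$ and every coefficient group $G$. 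The cohomology long exact sequence of the pair now yields that $f^*$ is an isomorphism on $H^k(X;G)$ for $k\le m-1$ and a monomorphism for $k=m$.

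The main obstacle is the top dimension $k=m$, where the relative Hurewicz argument produces only a monomorphism, and this is exactly where finiteness of type is indispensable. Over any field $\mathbb{F}$ the group $H^m(X;\mathbb{F})$ is finite dimensional, so an injective self-map of it is automatically surjective; applying this for $\mathbb{F}=\Q$ and for all prime fields and combining with the universal coefficient theorem upgrades the integral monomorphism on $H^m(X;\Z)$ to an isomorphism. Thus $f^*$ is an isomorphism on $H^k(X)$ for every $k\le m$, and the defining property of $m=N^*\mathcal{E}(X)$ promotes this to an isomorphism on $H^*(X)$ in all dimensions.

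It remains to deduce that $f$ is a homotopy equivalence from its being an isomorphism on $\pi_1$ and on cohomology in every dimension. With the fundamental group fixed by an isomorphism, I would invoke the Whitehead theorem in the non-simply connected setting, so that a self-map inducing an isomorphism on $\pi_1$ together with the relevant (co)homology isomorphisms is a weak homotopy equivalence, hence a genuine homotopy equivalence since $X$ is a CW complex. The two points to handle with care throughout are the bookkeeping of local versus constant coefficients forced by the nontrivial fundamental group, which I would manage by systematically working on the universal cover, and the top-dimensional gap above, whose closure is precisely the role of the finite type hypothesis.
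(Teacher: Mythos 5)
Your argument up to the statement that $f^*$ is an isomorphism on $H^k(X;\Z)$ for all $k\le m$ is correct, and this is exactly the intermediate goal of the paper's proof, but you reach it by a heavier route. The paper argues in homology first: the Hurewicz theorem gives that $f_*$ is an isomorphism on $H_k(X;\Z)$ for $k<m$ and \emph{surjective} on $H_m(X;\Z)$; since $X$ has finite type and a surjective endomorphism of a finitely generated abelian group is an isomorphism, $f_*$ is an isomorphism on $H_k$ for all $k\le m$, and the universal coefficient theorem plus the five lemma dualize this to cohomology. Your cohomological route through the universal covers produces only a \emph{monomorphism} in the top degree (an injective endomorphism of a finitely generated abelian group need not be surjective, e.g.\ multiplication by $2$ on $\Z$), which is why you need the extra field-coefficient and universal-coefficient repair; that repair does work, but the paper's homology-first order makes it unnecessary, since surjectivity is the easy direction for finitely generated abelian groups.

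The genuine gap is your final step. You claim that a self-map inducing an isomorphism on $\pi_1$ and on ordinary (constant-coefficient) cohomology in every dimension is a weak homotopy equivalence; this principle is false. Take $X=S^1\vee S^2$, let $t$ generate $\pi_1(X)\cong\Z$, and recall $\pi_2(X)\cong\Z[t,t^{-1}]$ as a $\Z[\pi_1]$-module, generated by the inclusion $\iota\colon S^2\to X$. The self-map that is the identity on $S^1$ and represents $2\iota-t\iota$ on $S^2$ is the identity on $\pi_1$ and an isomorphism on all integral (co)homology, because the Hurewicz map kills the $\pi_1$-action, so $2\iota - t\iota\mapsto 2[\iota]-[\iota]=[\iota]$; yet on $\pi_2(X)$ it is multiplication by $2-t$, which is not surjective since $2-t$ is not a unit of $\Z[t,t^{-1}]$, so the map is not a weak equivalence. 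The non-simply-connected Whitehead theorem needs (co)homology with local ($\Z[\pi_1]$-module) coefficients, and those are precisely what you do not control: your own construction gives local-coefficient vanishing of the pair only up to dimension $m$, while the defining property of $N^*\mathcal{E}(X)$ extends only the constant-coefficient isomorphisms beyond $m$. The paper needs no such bridge: in the source of this proposition (Theorem 41 of \cite{OY20}), the cohomological self-closeness number is defined so that its conclusion is that the self-map is a homotopy equivalence, so the proof is finished the moment $f^*$ is shown to be an isomorphism in dimensions $\le m=N^*\mathcal{E}(X)$. Under that (intended) reading your last paragraph is superfluous; under the weaker literal reading you adopted, the hole you tried to fill with a Whitehead-type theorem cannot be filled that way.
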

	
	\begin{proof}
		Suppose that a map $f\colon X\to X$ is an isomorphism in homotopy group up to dimension $n$.

		By the Hurewicz theorem, the $f$ is an isomorphism in $H_*(X)$ for $* <n$ and surjective in $H_n(X)$. On the other hand, a surjective self-map of a finitely generated abelian group is an isomorphism. Since $X$ is of finite type, $H_n(X)$ is a finitely generated abelian group, so $f$ is an isomorphism in $H_n(X)$. Then we get that $f$ is an isomorphism in $H_*(X)$ for $*\le n$.

		By the universal coefficient theorem, there is a commutative diagram
		\[
		\xymatrix{
			0\ar[r]&Ext(H_{i-1}(X),\mathbb{Z})\ar[r]\ar[d]^{(f_*)^*}&H^i(X)\ar[r]\ar[d]^{f^*}&Hom(H_i(X),\mathbb{Z})\ar[r]\ar[d]^{(f_*)^*}&0\\
			0\ar[r]&Ext(H_{i-1}(X),\mathbb{Z})\ar[r]&H^i(X)\ar[r]&Hom(H_i(X),\mathbb{Z})\ar[r]&0
		}
		\]
		with exact rows for $i\le n$. Thus by the five lemma, $f$ is an isomorphism in cohomology up to dimension $n$, completing the proof.
	\end{proof}

	\begin{exa}
		It is easy to give a space for which the equality holds in Proposition \ref{upper bound cohomology}. On the other hand, we can give an example of a space for which the inequality is strict such as
		\[
		N\mathcal{E}(S^2\cup_2e^3)=2\quad\text{and}\quad N^*\mathcal{E}(S^2\cup_2e^3)=3.
		\]
		We can see, in the special case, how big the difference $N^*\mathcal{E}(X)-N\mathcal{E}(X)$ is. It is proved in \cite{OY20} that for a CW complex $X$ of finite type, we have $N^*\mathcal{E}(X)\le N_*\mathcal{E}(X)+1$. Thus by \eqref{homotopy vs homology}, if $X$ is simply-connected additionally, then
		\[
		N^*\mathcal{E}(X)\le N\mathcal{E}(X)+1
		\]
	\end{exa}

	We further give a computable upper bound for $N^*\mathcal{E}(X)$. 
	\begin{lem}
		\label{cohomology generator}
		Let $X$ be a simply-connected space. Then
		\[
		N^*\mathcal{E}(X)\le d(H^*(X;\Z))
		\]
	\end{lem}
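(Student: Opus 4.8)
The plan is to use that $f^*$ is a homomorphism of graded rings, together with the hypothesis that $H^*(X;\Z)$ is generated as a $\Z$-algebra by elements of degree at most $d:=d(H^*(X;\Z))$. Let $f\colon X\to X$ be a self-map which is an isomorphism in $H^i(X;\Z)$ for all $i\le d$; the goal is to deduce that $f^*$ is an isomorphism in \emph{every} degree, which is precisely the inequality $N^*\mathcal{E}(X)\le d$.

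First I would note, by naturality of the cup product, that $f^*\colon H^*(X;\Z)\to H^*(X;\Z)$ is a morphism of graded rings, so that its image is a graded subalgebra. Fixing a set of homogeneous algebra generators, each of degree $\le d$, I observe that for any such generator $g$ we have $|g|\le d$, and therefore $f^*$ restricts to an isomorphism on $H^{|g|}(X;\Z)$; in particular $g$ lies in the image of $f^*$. Since the image of $f^*$ is a subalgebra containing a generating set, it must be all of $H^*(X;\Z)$, and hence $f^*$ is surjective in every degree.

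It then remains to promote surjectivity to bijectivity in each degree. Here I would use that every $H^n(X;\Z)$ is a finitely generated abelian group, which holds since the cohomology algebra is finitely generated (this is where the finite type of $X$ enters). A surjective endomorphism of a finitely generated abelian group---indeed of any Noetherian module---is automatically injective, so $f^*$ is an isomorphism on each $H^n(X;\Z)$, and $f$ is thus an isomorphism in cohomology, completing the argument. The one step that is not purely formal is this last one: the multiplicative structure yields surjectivity but says nothing about injectivity, so the finiteness of the cohomology groups is genuinely needed and is the main point to get right.
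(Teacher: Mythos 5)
Your proof is correct and takes essentially the same route as the paper: the paper's one-line proof simply appeals to the fact that $H^*(X;\Z)$ is generated in degrees $\le d(H^*(X;\Z))$, and your argument (surjectivity of the ring homomorphism $f^*$ because its image is a subalgebra containing the generators, then injectivity from the fact that a surjective endomorphism of a finitely generated abelian group is an isomorphism) is exactly the standard elaboration of that remark. Your explicit flagging of where finite generation of each $H^n(X;\Z)$ is needed is a useful clarification of a point the paper leaves implicit.
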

	
	\begin{proof}
		Since $d(H^*(X;\Z))$ is identified with the maximal degree of generators of $H^*(X;\Z)$, the statement follows.
	\end{proof}
	\section{$p$-Universality}
	This section recalls $p$-universality introduced by Mimura, O'Neill and Toda \cite{MOT71}. We also give important properties and examples of $p$-universal spaces. For $p$ prime or $p=0$, a $p$-equivalence is a map between spaces inducing an isomorphism on mod $p$ homology (rational homology for $p=0$). By \cite[Chapter 2, Theorem 1.14]{HMR75Ch2}, a $p$-equivalence between simply-connected CW complexes of finite type is equivalent to a $p$-local homotopy equivalence between such spaces. 
	
	\noindent
	Notice that a homotopy equivalence $f:X\to Y$ induces bijection between homotopy sets
	\begin{equation}
		\label{f_*}
		f_*:[K,X]\to[K,Y]
	\end{equation}
	for any space $K$. However, $f_*$ is no longer a bijection if $f$ is an $p$-equivalence instead. This leads us to the following definition, with which \eqref{f_*} is surjective in a weak sense. 
	\begin{dfn}
		\label{p-uni}
		A simply-connected finite CW complex $K$ is $p$-universal if for any $p$-equivalence $f:X\to Y$ and any map $g:K\to Y$, there is a diagram commutes up to homotopy 
		\[
		\xymatrix{
			X\ar[r]^f&Y\\
			K\ar@{.>}[u]\ar@{.>}[r]^r&K\ar[u]_g
		}
		\]
		where $r$ is a $p$-equivalence. 
		
	\end{dfn}
	Mimura, O'Neill and Toda \cite{MOT71} also gave several equivalent conditions for $p$-universality, and the following one is useful among those: 
	\begin{lem}
		\label{2equi}
		A simply-connected finite CW complex $K$ is $p$-universal if and only if 
		for any finite abelian group $G$ satisfying $p\nmid |G|$, there exists a $p$-equivalence $f:K\to K$ such that $f^*$ is trivial on $H^*(K;G)$. 
	\end{lem}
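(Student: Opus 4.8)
The plan is to prove the two implications separately, with the bridge between the homotopy-theoretic notion of $p$-equivalence and the algebraic condition on coefficient groups supplied by one preliminary observation. If $h\colon X\to Y$ is a $p$-equivalence between simply-connected spaces of finite type, then its homotopy fibre $F$ is $\Z_{(p)}$-acyclic (a $p$-equivalence is an isomorphism on $\Z_{(p)}$-homology, as $\Z_{(p)}$ is flat), so by the mod-$\mathcal{C}$ Hurewicz theorem with $\mathcal{C}$ the class of finite abelian groups of order prime to $p$, each $\pi_n(F)$ is a finite abelian group with $p\nmid|\pi_n(F)|$. This is precisely the class of coefficient groups appearing in the statement, and it is what allows the obstruction groups $H^*(K;\pi_n(F))$ to interact with the hypothesised maps.

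For necessity, assume $K$ is $p$-universal and fix a finite abelian $G$ with $p\nmid|G|$. I would first manufacture a single $p$-equivalence over $K$ that is cohomologically trivial with $G$-coefficients. Let $\kappa\colon K\to B:=\prod_n K(H^n(K;G),n)$ be the map whose components represent a generating set of each finite group $H^n(K;G)$, and let $f\colon X\to K$ be the inclusion of the homotopy fibre $X=\mathrm{hofib}(\kappa)$. Since $G$ has order prime to $p$, each $H^n(K;G)$ does too, so every $K(H^n(K;G),n)$, and hence $B$, is $\Z_{(p)}$-acyclic; running this through the Serre spectral sequence of $X\to K\to B$ shows that $f$ is a $\Z_{(p)}$-homology isomorphism, i.e.\ a $p$-equivalence. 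Moreover every class in $H^*(K;G)$ equals $\kappa^*$ of a class on $B$, and $\kappa f\simeq *$, so $f^*=0$ on $H^*(K;G)$. Applying $p$-universality to this $f$ together with $g=\mathrm{id}_K$ yields a $p$-equivalence $r\colon K\to K$ and a lift $\tilde g$ with $f\tilde g\simeq r$; therefore $r^*=\tilde g^*f^*=0$ on $H^*(K;G)$, which is the required self-$p$-equivalence.

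For sufficiency, assume the coefficient condition and take an arbitrary $p$-equivalence $h\colon X\to Y$ and a map $g\colon K\to Y$. I would replace $h$ by its Moore--Postnikov tower $\{Z_n\}$, whose successive stages are principal fibrations with fibres $K(\pi_n(F),n)$, where by the first paragraph each $\pi_n(F)$ is finite of order prime to $p$. The lift is constructed up the tower by induction, maintaining a $p$-equivalence $r_n\colon K\to K$ and a lift $\psi_n\colon K\to Z_n$ of $g r_n$. At each stage the obstruction to lifting $\psi_n$ across the next principal fibration lies in $H^*(K;\pi_{n+1}(F))$; I precompose $\psi_n$ with the hypothesised $p$-equivalence $f_{\pi_{n+1}(F)}$ killing $H^*(K;\pi_{n+1}(F))$, so that the obstruction of $\psi_n\circ f_{\pi_{n+1}(F)}$ is the pullback of the previous one and hence vanishes, producing a lift $\psi_{n+1}$; setting $r_{n+1}=r_n\circ f_{\pi_{n+1}(F)}$ keeps $r_{n+1}$ a $p$-equivalence. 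Because $K$ is finite-dimensional only finitely many stages are relevant, and the process terminates with a lift $\tilde g\colon K\to X$ of $g r$ for some $p$-equivalence $r$, exhibiting $p$-universality.

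The main obstacle is the choice-dependence of the higher obstructions in the sufficiency argument: the obstruction at stage $n+1$ depends on the chosen lift $\psi_n$, so one cannot precompose a single time at the start and appeal to naturality globally. The device that resolves this is to precompose with a fresh $p$-equivalence at every stage and accumulate them into one $r$, using that a composite of $p$-equivalences is again a $p$-equivalence and that finite-dimensionality of $K$ bounds the number of stages. A secondary technical point, needed in both directions, is the identification of the homotopy of the relevant fibres as finite of order prime to $p$ --- this is where finite type enters, and where the hypothesis on coefficient groups matches exactly the homotopy of the fibre of a $p$-equivalence.
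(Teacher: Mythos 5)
The paper itself does not prove this lemma; it is quoted from \cite{MOT71}, so your argument has to stand entirely on its own. Your sufficiency half (the coefficient condition implies $p$-universality) is essentially correct: since $X$ and $Y$ are simply-connected of finite type, a $p$-equivalence $h\colon X\to Y$ is a $p$-local homotopy equivalence, so the homotopy groups of its fibre $F$ are finite of order prime to $p$; the Moore--Postnikov tower of $h$ is principal, the single obstruction at each principal stage pulls back under precomposition, and your device of inserting a \emph{fresh} self-$p$-equivalence at every stage and absorbing them all into one composite $r$ correctly handles the dependence of the higher obstructions on the chosen lifts, while finite-dimensionality of $K$ both bounds the number of relevant stages and converts a lift up the tower into a genuine lift to $X$.

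The necessity half, however, has a genuine gap. You apply the defining property of $p$-universality to the fibre inclusion $f\colon\mathrm{hofib}(\kappa)\to K$, but $\mathrm{hofib}(\kappa)$ is an infinite-dimensional space, and it need not even be simply-connected: $\pi_1(\mathrm{hofib}(\kappa))$ is the cokernel of a map $\pi_2(K)\to\pi_2(B)$, which is nonzero in general (take $K=S^2\cup_q e^3$ and $G=\Z/q\oplus\Z/q$). The definition of $p$-universality --- in \cite{MOT71}, and as this paper actually uses it in Proposition \ref{inj}, Remark \ref{f*} and Lemma \ref{inverse} --- quantifies only over $p$-equivalences between simply-connected \emph{finite} complexes (or at least finite-type ones). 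This restriction cannot be dropped: already for $p$ prime, the map $*\to K_{(0)}$ into the rationalization is a mod $p$ homology isomorphism, and for any finite complex $K$ with $\widetilde{H}^*(K;\Q)\ne 0$ no composite $K\xrightarrow{r}K\to K_{(0)}$ with $r$ a $p$-equivalence can be nullhomotopic (such an $r$ is automatically a rational equivalence), so under the unrestricted reading no such $K$ would be $p$-universal, contradicting Proposition \ref{modp uni}. In effect your two halves read the definition in two incompatible ways: sufficiency needs the restricted reading (to know $\pi_*(F)$ is finite of order prime to $p$), while necessity needs the unrestricted one (to be allowed to feed in the infinite-dimensional fibre). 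Nor is the gap easily patched within your strategy: truncating $\mathrm{hofib}(\kappa)$ to a finite complex creates spurious homology at the top and destroys the $p$-equivalence property, and this is precisely what makes that direction of the equivalence the nontrivial one in \cite{MOT71}. (A smaller, fixable slip in the same construction: a map to $\prod_n K(H^n(K;G),n)$ whose components ``represent a generating set of $H^n(K;G)$'' does not typecheck; one should instead take $B=\prod_{n,i}K(G,n)$ indexed by generators $x_{n,i}\in H^n(K;G)=[K,K(G,n)]$, which does make every class pulled back from $B$.)
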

	
	Definition \ref{p-uni} did not demand any weak injectivity of the induced map \eqref{f_*}, and Mimura, O'Neill and Toda did not consider it in \cite{MOT71} either. However, we can deduce that \eqref{f_*} is injective in a weak sense as follows.  
	\begin{pro}
		\label{inj}
		Let $K$ be a $p$-universal space and $f$ be a $p$-equivalence between simply-connected CW complexes of finite type in \eqref{f_*}, then for any maps $h_1,h_2:K\to X$ satisfying $f\circ h_1\simeq f\circ h_2$, there exists a $p$-equivalence $r:K\to K$ such that 
		\[h_1\circ r\simeq h_2\circ r\]
	\end{pro}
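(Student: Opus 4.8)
The plan is to recast the desired relation $h_1\circ r\simeq h_2\circ r$ as a single lifting problem across a $p$-equivalence, and then apply Definition \ref{p-uni} directly. Concretely, I would form the homotopy pullback $Z$ of the diagram $X\xrightarrow{f}Y\xleftarrow{f}X$, whose points are triples $(x_1,x_2,\gamma)$ with $\gamma$ a path in $Y$ from $f(x_1)$ to $f(x_2)$, together with the two projections $q_1,q_2\colon Z\to X$. The homotopy $H\colon f\circ h_1\simeq f\circ h_2$ assembles $h_1,h_2$ into a single map $\phi\colon K\to Z$, $\phi(k)=(h_1(k),h_2(k),H(k,-))$, satisfying $q_1\phi=h_1$ and $q_2\phi=h_2$. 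The diagonal $\Delta\colon X\to Z$, $x\mapsto(x,x,\mathrm{const})$, is a common section of $q_1$ and $q_2$, so any factorization of $\phi$ through $\Delta$ up to homotopy would force $h_1$ and $h_2$ to agree after the corresponding precomposition.

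The heart of the argument is to show that $\Delta$ is itself a $p$-equivalence. The map $q_1\colon Z\to X$ is a fibration whose fiber over a point is the homotopy fiber $F$ of $f$. Since $f$ is a $p$-equivalence between simply-connected spaces, a comparison of the mod $p$ Serre spectral sequences (rational for $p=0$) for $F\to X\to Y$ shows that $F$ is $\Z/p$-acyclic (resp.\ $\Q$-acyclic); here the simple-connectivity of $Y$ is used to guarantee trivial coefficients, and the finite-type hypothesis guarantees that the relevant groups are finitely generated. Consequently $q_1$ induces an isomorphism on mod $p$ homology, and since $q_1\circ\Delta=\mathrm{id}_X$, the section $\Delta$ is a mod $p$ homology isomorphism as well, i.e.\ a $p$-equivalence.

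With this in hand I would apply Definition \ref{p-uni} to the $p$-equivalence $\Delta\colon X\to Z$ and the map $\phi\colon K\to Z$: this yields a $p$-equivalence $r\colon K\to K$ and a map $\psi\colon K\to X$ with $\Delta\circ\psi\simeq\phi\circ r$. Composing with the projections gives $h_1\circ r=q_1\phi r\simeq q_1\Delta\psi=\psi$ and likewise $h_2\circ r\simeq\psi$, whence $h_1\circ r\simeq h_2\circ r$, as required. The main obstacle is the $p$-acyclicity of the homotopy fiber $F$ in the previous step---this is exactly where the hypotheses that $X,Y$ be simply-connected and of finite type enter---and, relatedly, the observation that Definition \ref{p-uni} imposes no simple-connectivity condition on the target of the $p$-equivalence. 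The latter point is essential, since $Z$ need not be simply-connected: its fundamental group is the cokernel of $\pi_2(f)$, a finite abelian group of order prime to $p$. One should also verify that $Z$ has the homotopy type of a finite-type CW complex so that the hypotheses of the definition are genuinely met.
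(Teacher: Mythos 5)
Your construction is correct as far as it goes: the homotopy pullback $Z$ of $X\xrightarrow{f}Y\xleftarrow{f}X$, the map $\phi\colon K\to Z$ assembled from the homotopy $fh_1\simeq fh_2$, and the diagonal section $\Delta\colon X\to Z$ all behave as you say. Your verification that $\Delta$ is a $p$-equivalence is also sound: the fiber of $q_1$ is the homotopy fiber $F$ of $f$, which is connected and mod $p$ acyclic (Zeeman comparison applied to the map of fibrations over $\mathrm{id}_Y$, using that $Y$ is simply-connected), so $q_1$ is a mod $p$ homology isomorphism and hence so is its section $\Delta$. The formal conclusion from $\Delta\psi\simeq\phi r$ is fine as well.

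The genuine gap is the step where you apply Definition \ref{p-uni} to the $p$-equivalence $\Delta\colon X\to Z$. The notion of $p$-universality at stake here is Mimura--O'Neill--Toda's: it is the notion characterized by Lemma \ref{2equi} and the one verified for finite mod $p$ $(co$-$)H$-spaces in Proposition \ref{modp uni}, and in \cite{MOT71} the lifting property is only asserted for $p$-equivalences between \emph{simply-connected finite complexes} (compare Remark \ref{f*}, where this restriction is stated explicitly). Your target $Z$ fails both conditions: as you yourself note, $\pi_1(Z)\cong\mathrm{coker}(\pi_2(f))$ need not vanish, and $Z$ is not a finite complex --- indeed it is typically not even of the homotopy type of a finite-dimensional one, since the fiber $F$ of $q_1$ generally has (prime-to-$p$) torsion homology in infinitely many degrees. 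You flag the absence of a simple-connectivity hypothesis in Definition \ref{p-uni} as ``essential'' to your argument, but this reads the paper's loosely worded definition beyond what the underlying theory supports: no space is shown, in this paper or in \cite{MOT71}, to satisfy the unrestricted lifting property, so a proof relying on it would not apply to the spaces the paper actually cares about (the $(co$-$)H_0$-spaces fed into Theorems \ref{main} and \ref{main co-H}). To close the gap you would have to prove that the lifting property of a $p$-universal $K$ extends to maps such as $\Delta$, whose homotopy fiber has finite homotopy groups of order prime to $p$; that is done by obstruction theory with coefficients in those groups together with Lemma \ref{2equi} --- which is exactly the paper's own proof, carried out there for the pair $(K\times I,K\times\partial I)$ with coefficients $\pi_n(Y,X)\in\mathcal{C}_p$. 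So the detour through the homotopy pullback does not avoid the obstruction-theoretic argument; it only relocates it.
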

	\begin{proof}
		For maps $h_1,h_2:K\to X$ which $f\circ h_1\simeq f\circ h_2$, the obstruction of the homotopy between $h_1$ and $h_2$ lies in $H^{n+1}(K\times I,K\times\partial I;\pi_n(Y,X))$. 
		\[
		\xymatrix{
			K\times I\ar[r]\ar@{.>}[dr]&Y\\
			K\times\partial I\ar[r]_{\quad \{h_1,h_2\}}\ar[u]&X\ar[u]_f
		}
		\]	
		Since $f$ is a $p$-equivalence between simply-connected CW complexes of finite type, it is also a $p$-local homotopy equivalence, hence $\pi_n(Y,X)\in\mathcal{C}_p$ for all $n$. By Lemma \ref{2equi}, there exists a $p$-equivalence $r:K\to K$ inducing a trivial map on 
		\begin{align*}
			&H^{n+1}(K\times I,K\times\partial I;\pi_n(Y,X)) \\
			&\cong H^{n+1}(K\ltimes S^1;\pi_n(Y,X)) \\
			&\cong H^{n+1}(S^1;\pi_n(Y,X))\oplus H^{n+1}(\Sigma K;\pi_n(Y,X))
		\end{align*}
		completing the proof. 
	\end{proof}
	
	\begin{rem}
		\label{f*}
		A homotopy equivalence $f$ also induces bijection 
		\begin{equation}
			\label{f^*}
			f^*:[Y,K]\to[X,K]
		\end{equation}
		for any space $K$. Mimura, O'Neill and Toda also proved that the map \eqref{f^*} is surjective in a weak sense if and only if $K$ is $p$-universal and $f$ is a $p$-equivalence between simply-connected finite complexes \cite[Theorem 2.1]{MOT71}. Namely, for any map $g:X\to K$, there exists a $p$-equivalence $r:K\to K$ and a map $h:Y\to K$ such that the following diagram commutes up to homotopy: 
		\[
		\xymatrix{
			X\ar[r]^f\ar[d]^g&Y\ar@{.>}[d]^h\\
			K\ar@{.>}[r]^r&K
		}
		\]
		As well as Proposition \ref{inj}, \eqref{f^*} is also injective in a weak sense. 
	\end{rem}

	In general if a map $X\to Y$ is a $p$-equivalence, it needs not exist a $p$-equivalence from $Y$ to $X$. However, the $p$-universality guarantees the existence of the inverse map, that is, for those involving a $p$-universal space we have: 
	\begin{lem}
		\label{inverse}
		Let $f:X\to Y$ be a $p$-equivalence between simply-connected finite complexes. If either $X$ or $Y$ is $p$-universal, then there exists a $p$-equivalence $h:Y\to X$. 
	\end{lem}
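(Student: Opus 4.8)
The plan is to split along the two possibilities in the hypothesis and, in each case, to apply the appropriate weak surjectivity property of a $p$-universal space with the test map $g$ taken to be an identity. The construction immediately produces a candidate $h:Y\to X$, and the only thing left to check is that $h$ is itself a $p$-equivalence; this I would settle by a two-out-of-three argument on mod $p$ (rational if $p=0$) homology, using that $p$-equivalences are precisely the maps inducing isomorphisms on that homology, a class closed under the required cancellation.

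First, suppose $Y$ is $p$-universal. Since $Y$ is a simply-connected finite complex and $f:X\to Y$ is a $p$-equivalence, I would invoke Definition \ref{p-uni} with $K=Y$ and $g=\mathrm{id}_Y$. This yields a map $h:Y\to X$ together with a $p$-equivalence $r:Y\to Y$ such that $f\circ h\simeq \mathrm{id}_Y\circ r=r$. Passing to homology gives $f_*\circ h_*=r_*$; as both $f_*$ and $r_*$ are isomorphisms, so is $h_*$, and hence $h$ is the desired $p$-equivalence $Y\to X$.

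Next, suppose instead that $X$ is $p$-universal. Here I would use the dual weak surjectivity recorded in Remark \ref{f*} (i.e. \cite[Theorem 2.1]{MOT71}), applied to the $p$-equivalence $f:X\to Y$ with $K=X$ and $g=\mathrm{id}_X$. This produces a $p$-equivalence $r:X\to X$ and a map $h:Y\to X$ with $h\circ f\simeq r\circ \mathrm{id}_X=r$. The same homology computation, now reading $h_*\circ f_*=r_*$, forces $h_*$ to be an isomorphism, so again $h$ is a $p$-equivalence.

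The conceptual content is simply the observation that specializing $g$ to an identity turns the weak surjectivity of $f_*$ (respectively $f^*$) into a one-sided $p$-equivalence inverse, up to post- or pre-composition with the auto-$p$-equivalence $r$ that the universality statements necessarily introduce. I do not anticipate any genuine obstacle: the hypotheses that $X$ and $Y$ are simply-connected finite complexes are exactly what Definition \ref{p-uni} and Remark \ref{f*} require, and the only verification, the two-out-of-three step, is immediate from the homological characterization of $p$-equivalences.
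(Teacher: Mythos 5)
Your proposal is correct and follows essentially the same route as the paper: both cases are handled by plugging $K=Y$, $g=\mathrm{id}_Y$ into Definition \ref{p-uni} (respectively $K=X$, $g=\mathrm{id}_X$ into Remark \ref{f*}) and reading off $h$ from the resulting diagram. Your explicit two-out-of-three verification on homology is exactly the step the paper leaves implicit when it asserts that $f\circ h\simeq r$ forces $h$ to be a $p$-equivalence.
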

	\begin{proof}
		If $Y$ is $p$-universal, let $K=Y$ and $g=id_Y$ in the diagram in Definition \ref{p-uni}
		\[
		\xymatrix{
			X\ar[r]^f&Y\\
			Y\ar@{.>}[u]^h\ar[r]^r&Y\ar@{=}[u]
		}
		\]
		then there exist map $h:Y\to X$ and $p$-equivalence $r:Y\to Y$ such that $f\circ h\simeq r$, hence $g$ is also a $p$-equivalence. 
		
		\noindent
		For $X$ $p$-universal, pluge $K=X$ and $g=id_X$ into the diagram in Remark \ref{f*} 
		\[
		\xymatrix{
			X\ar[r]^f\ar@{=}[d]&Y\ar@{.>}[d]^h\\
			X\ar[r]^r&X
		}
		\]
		then we also get an inverse $p$-equivalence $h$. 
	\end{proof}

	Next we introduce $mod\ p$ $H$-spaces and $mod\ p$ $co$-$H$-spaces which are crucial examples of $p$-universal spaces, and we will study their self-closeness numbers in the next two sections. Recall that a simply-connected space is a $mod\ p$ $(co$-)$H$-space if its $p$-localization is an $(co$-$)H$-space. We say that a $mod\ p$ $(co$-$)H$-space is finite if it is a finite CW complex.

	Mimura, O'Neill and Toda \cite[Theorem 4.2]{MOT71} proved the following proposition: 
	\begin{pro}
		\label{modp uni}
		Any finite $mod\ p$ $(co$-$)H$-space is $p$-universal. 
	\end{pro}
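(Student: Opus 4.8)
The plan is to verify the criterion of Lemma \ref{2equi}: for each finite abelian group $G$ with $p\nmid|G|$ I must exhibit a $p$-equivalence $f\colon K\to K$ with $f^*=0$ on $\tilde H^*(K;G)$. Writing $m=\exp(G)$ (so $\gcd(m,p)=1$) and applying the universal coefficient theorem, it suffices to find a $p$-equivalence $f$ whose away-from-$p$ behaviour on integral cohomology is multiplication by a power of $m$: for then the two summands $\tilde H^*(K;\Z)\otimes G$ and $\mathrm{Tor}(\tilde H^{*+1}(K;\Z),G)$ of $\tilde H^*(K;G)$ are both annihilated by $m$ and hence sent to zero, while the $p$-primary part of $\tilde H^*(K;\Z)$ is invisible to $G$-coefficients. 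Thus the entire problem reduces to constructing, on the finite complex $K$, a $p$-local equivalence that is ``multiplication by $m$'' away from $p$.

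The natural model for such a map lives on $K_{(p)}$, where the hypothesis supplies the missing structure. In the co-$H$ case $K_{(p)}$ carries a comultiplication, so $\tilde H^*(K_{(p)};R)$ has trivial cup products and the degree map $\theta_k\colon K_{(p)}\to K_{(p)}$ obtained by pinching and folding acts as multiplication by $k$ in every degree; for $k=m$ this is zero on mod-$m$ cohomology, and since $m$ is a unit of $\Z_{(p)}$ it is a homology isomorphism, hence a self-equivalence of the simply-connected space $K_{(p)}$. In the $H$ case I would instead use the power map $\psi_k$, which preserves the augmentation-ideal filtration of $\tilde H^*(K_{(p)};R)$ and acts by $k^\ell$ on the $\ell$-th associated graded; taking $k=m$ it raises filtration, so a high iterate $\psi_m^{\circ N}$ is zero on $\tilde H^*(K_{(p)};R)$, while each $\psi_m$ multiplies $\pi_*(K_{(p)})$ by the unit $m$ and is a self-equivalence. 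Either way I obtain a self-equivalence of $K_{(p)}$ that is trivial on mod-$m$ cohomology.

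The remaining step is to descend this $p$-local self-equivalence to $K$ itself, and this is where the real work lies. I would feed it into the arithmetic fracture square $K\simeq K_{(p)}\times_{K_\Q}K_{(p')}$, where $K_{(p')}$ denotes the localization of $K$ away from $p$: a self-map of $K$ is the same datum as a pair of maps into the two corners agreeing after rationalization. For the $p$-local corner I take the (co-)power map above. For the away-from-$p$ corner I must realize the same rational map on $K_{(p')}$; since $K$ is rationally a wedge of spheres in the co-$H$ case and a product of odd spheres in the $H$ case, the rational degree/power map is honestly realizable, and I would extend it over $K_{(p')}$ so that it induces multiplication by a power of $m$ on integral cohomology. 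Gluing the two corners yields $f\colon K\to K$ whose $p$-localization is the chosen self-equivalence; by the identification of $p$-equivalences with $p$-local equivalences between simply-connected finite complexes recalled at the start of this section, $f$ is a $p$-equivalence, and because mod-$m$ cohomology is detected away from $p$, the away-from-$p$ corner forces $f^*$ to vanish on $\tilde H^*(K;G)$.

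The hard part is precisely this descent. The $(co$-$)H$ structure is only available $p$-locally, so nothing on $K_{(p')}$ is a priori a (co-)$H$-space, and the difficulty is to extend the rational degree map over $K_{(p')}$ with sufficient control over its effect on the prime-to-$p$ torsion of the cohomology. I expect the cleanest route is an induction up the Postnikov tower of $K_{(p')}$, in which the successive obstructions lie in finite groups of order prime to $p$ and are cleared by the freedom to replace $m$ with a high power $m^L$ (which does not affect triviality modulo $m$); once the away-from-$p$ corner is in place, the conclusion is bookkeeping with the fracture square and the universal coefficient theorem.
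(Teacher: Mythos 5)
First, a remark on the target: the paper does not prove Proposition \ref{modp uni} at all --- it quotes it from Mimura--O'Neill--Toda \cite{MOT71}, so your attempt must stand as a free-standing proof. Your opening reduction is sound: by Lemma \ref{2equi} and the naturality of the universal coefficient sequence it suffices to produce, for each $m$ prime to $p$, a $p$-equivalence $f\colon K\to K$ acting by a positive power of $m$ on $H^*(K;\Z[1/p])$ (strictly, since the UCT \emph{splitting} is not natural in the space, one should pass to $f\circ f$ to conclude $f^*=0$ on $H^*(K;G)$; this is a minor fix). The $p$-local constructions are also correct: $\psi_m$ (resp.\ the co-$H$ degree map $\theta_m$) is a self-homotopy-equivalence of $K_{(p)}$ because $m$ is a unit in $\Z_{(p)}$. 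But the property you extract from them --- triviality on mod-$m$ cohomology of $K_{(p)}$ --- is vacuous: since $m$ is invertible in $\Z_{(p)}$ we have $\tilde{H}^*(K_{(p)};\Z/m)=0$, so \emph{every} self-map of $K_{(p)}$ has this property. All the actual content of the proposition lives in the prime-to-$p$ torsion, i.e.\ in the corner $K_{(p')}$ of your fracture square, where no $H$- or co-$H$-structure is available.

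Consequently the whole proof is concentrated in your ``descent'' step, and that step is only conjectured, not carried out; moreover the mechanism you propose for it fails as stated. You must construct $\beta\colon K_{(p')}\to K_{(p')}$ agreeing rationally with $\psi_m^{\circ N}$ and vanishing on $\tilde{H}^*(K_{(p')};\Z/m)$. Whichever way you set up the induction, the obstruction groups are not ``finite groups of order prime to $p$'': lifting $K_{(p')}\to K_{\Q}\to K_{\Q}$ along the rationalization $K_{(p')}\to K_{\Q}$ puts the obstructions in $H^{*+1}(K_{(p')};\pi_*(F))$, where $\pi_*(F)$ is an extension of the finite prime-to-$p$ torsion of $\pi_*(K_{(p')})$ by divisible groups $(\Q/\Z[1/p])^{r}$ coming from the cokernel of $\pi_{*+1}(K_{(p')})\to\pi_{*+1}(K_{\Q})$; climbing the Postnikov tower of $K_{(p')}$ instead puts them in $H^{*+1}(K_{(p')};\pi_*(K_{(p')}))$, which are infinite $\Z[1/p]$-modules whenever the rank is positive. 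Worse, the device of ``replacing $m$ by a high power $m^L$'' requires the obstruction to depend linearly on the map, and such linearity (as in Lemma \ref{mu} or Proposition \ref{rho X/G} of the paper, where it comes from naturality in a sphere coordinate or a principal fibration) is exactly what an $H$- or co-$H$-structure on the prime-to-$p$ side would provide --- which is what you lack. In effect your argument reduces Proposition \ref{modp uni} to the same statement localized away from $p$, which is no progress; the substance of \cite{MOT71} is precisely to manufacture self-maps of the finite complex $K$ itself (where $\tilde{H}^*(K;\Z/m)\neq 0$ and the filtration argument has content) out of the localized multiplicative structure, and that transfer is entirely missing from your proposal.
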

	In the rest of this paper, $mod\ 0$ $(co$-$)H$-space will be denoted as $(co$-$)H_0$-space instead. There is a well-known characterization for $H_0$-space \cite{MT71}. 
	\begin{pro}
		\label{H_0}
		For a simply-connected finite CW complex $X$, the following conditions are equivalent: 
		\begin{enumerate}
			\item [(a)] 
			$X$ is an $H_0$-space. 
			\item [(b)] 
			There exists a 0-equivalence $\prod_{i=1}^r S^{n_i}\to X$ for odd integers $n_i$ satisfying $2\le n_1\le\dots\le n_r$. 
			\item [(c)]
			All $k$-invariants of $X$ is with finite order. 
		\end{enumerate}
		The sequence $n_1,\dots,n_r$ is called the {\emph type} of an $H_0$-space $X$. 
	\end{pro}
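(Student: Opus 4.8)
I would prove the cycle of implications $(b)\Rightarrow(a)\Rightarrow(c)\Rightarrow(b)$, working throughout with the rationalization $\ell\colon X\to X_\Q$, for which $X$ being an $H_0$-space is by definition the same as $X_\Q$ being an $H$-space. Two classical inputs drive the argument: Hopf's theorem, that the rational cohomology of a connected $H$-space of finite type is a free graded-commutative algebra, and the structure theorem for simply-connected rational spaces, that $X_\Q$ is an $H$-space if and only if it is a product of rational Eilenberg--MacLane spaces $K(\Q,n)$, if and only if all its $k$-invariants vanish. Since a $k$-invariant of $X$ lives in $H^{*}(P;\pi_n(X))$ for a Postnikov section $P$ and, by finite type, has finite order exactly when its image in the corresponding rational group (the $k$-invariant of $X_\Q$) vanishes, condition (c) is precisely the statement that $X_\Q$ has trivial $k$-invariants.

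For $(b)\Rightarrow(a)$ the plan is to rationalize: for $n$ odd, $(S^{n})_\Q\simeq K(\Q,n)$ is an $H$-space, and a finite product of $H$-spaces is an $H$-space, so $\prod_i S^{n_i}$ is an $H_0$-space; a $0$-equivalence induces an equivalence $(\prod_i S^{n_i})_\Q\simeq X_\Q$, whence $X_\Q$ is an $H$-space. For $(a)\Rightarrow(c)$, the $H$-structure on $X_\Q$ makes it a product of rational Eilenberg--MacLane spaces by the structure theorem, so its $k$-invariants vanish, which by the identification above says exactly that the $k$-invariants of $X$ have finite order.

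The substance is in $(c)\Rightarrow(b)$. From vanishing rational $k$-invariants I obtain $X_\Q\simeq\prod_n K(\pi_n(X)\otimes\Q,\,n)$. Because $X$ is a finite complex, $H^*(X;\Q)$ is finite-dimensional; since $K(\Q,n)$ has polynomial (infinite-dimensional) rational cohomology when $n$ is even and a single exterior generator when $n$ is odd, every $n$ occurring must be odd, and there are finitely many. Writing the odd degrees as $n_1\le\dots\le n_r$, this gives $X_\Q\simeq\prod_i K(\Q,n_i)\simeq(\prod_i S^{n_i})_\Q$. To produce an honest integral $0$-equivalence, for each $i$ I would choose $\alpha_i\colon S^{n_i}\to X$ whose rational Hurewicz image pairs nontrivially with the $i$-th cohomology generator (such $\alpha_i$ exists because $\pi_{n_i}(X)\otimes\Q$ is dual to the space of indecomposables of $H^*(X;\Q)$), and then assemble $\ell\circ\alpha_1,\dots,\ell\circ\alpha_r$ into a single map $\prod_i S^{n_i}\to X_\Q$ using the $H$-multiplication of $X_\Q$; by construction this induces an isomorphism on rational homology.

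The main obstacle, and the final step, is to descend this rational equivalence to a map landing in $X$ rather than $X_\Q$. Here I would exploit that the source $P=\prod_i S^{n_i}$ is compact: a map $P\to X_\Q$ clears only finitely many denominators, so after precomposing with a self-map of $P$ that multiplies each sphere factor by a suitable nonzero integer (itself a $0$-equivalence), the map lifts through the localization $\ell\colon X\to X_\Q$ to a map $\phi\colon P\to X$ with $\ell\circ\phi$ rationally equivalent to the assembled map. Since both the lift and the correcting self-map are $0$-equivalences, $\phi$ is again an isomorphism on rational homology, giving the desired $0$-equivalence $\prod_i S^{n_i}\to X$. I expect this localization/clearing-denominators step to be the delicate part, since it is exactly where the distinction between the rational model and the integral space $X$ must be bridged; the remaining implications are formal consequences of rationalization and the two classical structure theorems.
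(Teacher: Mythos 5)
The paper never proves this proposition: it is quoted as a classical result of Mimura and Toda \cite{MT71}, so your argument must stand on its own rather than be measured against an in-paper proof. Most of it does stand. The implications $(b)\Rightarrow(a)$ and $(a)\Leftrightarrow(c)$ via rationalization, the splitting of a simply-connected rational $H$-space of finite type as a product of spaces $K(\Q,n)$, and the identification ``$k$-invariant of $X$ has finite order $\Leftrightarrow$ the corresponding $k$-invariant of $X_\Q$ vanishes'' (which does require finite type of the Postnikov stages, available here) are all correct and standard. In $(c)\Rightarrow(b)$, finite-dimensionality of $H^*(X;\Q)$ correctly forces the degrees to be odd and finite in number, and assembling integral classes $\alpha_i\in\pi_{n_i}(X)$ spanning $\pi_*(X)\otimes\Q$ by means of the $H$-structure of $X_\Q$ does yield a rational equivalence $\prod_i S^{n_i}\to X_\Q$.

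The genuine gap is the descent step, exactly where you flagged it, and the justification offered --- that by compactness a map $P=\prod_i S^{n_i}\to X_\Q$ ``clears only finitely many denominators'' --- is not an argument. Compactness only places the image of $P$ inside some finite subcomplex $C\subset X_\Q$, and such a $C$ has no useful relation to $X$: for a general finite complex $X$ the rationalization is \emph{not} a telescope of $0$-equivalences between finite complexes (there need not exist any self-map of $X$ inducing multiplication by $N$), so there is nothing to clear denominators into; nor is the homotopy class of a map $P\to X_\Q$ a finite list of rationals on which $\prod_i\underline{N_i}$ acts by scalar multiplication. What makes the step true is obstruction theory against the homotopy fiber $F$ of $\ell\colon X\to X_\Q$: every element of $\pi_m(F)\cong\pi_{m+1}(X_\Q,X)$ has finite order (it is an extension of the torsion subgroup of $\pi_m(X)$ by the divisible torsion group $(\pi_{m+1}(X)\otimes\Q)/\mathrm{im}\,\pi_{m+1}(X)$), hence every obstruction class in $H^{m+1}(P;\pi_m(F))$ is torsion since $P$ is finite; and by naturality, precomposition with $\prod_i\underline{N_i}$ multiplies each K\"unneth summand of the obstruction by a nonempty product of the $N_i$, so choosing the $N_i$ divisible by the relevant (finite) orders kills the finitely many obstructions skeleton by skeleton. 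This ``finite-order obstruction plus linearity under scaling'' scheme is precisely the technique this paper runs in Lemma \ref{mu} and Propositions \ref{rho X/G} and \ref{mu X/G}. Two cleaner repairs: (i) avoid $X_\Q$ altogether and extend $\bigvee_i N_i\alpha_i\colon\bigvee_i S^{n_i}\to X$ over the product directly, noting that the obstructions factor through the $k$-invariants of $X$, which have finite order by hypothesis (c), and that the resulting map is a $0$-equivalence because it has invertible linear part on exterior rational cohomology; or (ii) once (a) is established, invoke Proposition \ref{modp uni}: the product of odd spheres is a finite $H_0$-space, hence $0$-universal, and the defining property of $0$-universality applied to the $0$-equivalence $\ell$ (the definition places no finiteness hypothesis on the target) is exactly the descent you need. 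As written, however, the key step is asserted rather than proved.
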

	
	Similar result for co-H$_0$-spaces was proved by Arkowitz and Curjel \cite[Theorem 2.5]{AC64}. 
	\begin{pro}
		\label{co-H_0}
		For a simply-connected finite CW complex $X$, the following conditions are equivalent: 
		\begin{enumerate}
			\item [(a)] 
			$X$ is an $co$-$H_0$-space. 
			\item [(b)] 
			There exists a 0-equivalence $X\to\bigvee_{i=1}^r S^{n_i}$ for $2\le n_1\le\dots\le n_r$. 
		\end{enumerate}
		The sequence $n_1,\dots,n_r$ is called the {\emph type} of an $co$-$H_0$-space $X$. 
	\end{pro}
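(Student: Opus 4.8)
The plan is to prove the two implications separately, treating (b)$\Rightarrow$(a) as routine and reserving the real work for (a)$\Rightarrow$(b). For (b)$\Rightarrow$(a), suppose $h\colon X\to\bigvee_{i=1}^r S^{n_i}$ is a 0-equivalence. Each $S^{n_i}=\Sigma S^{n_i-1}$ is a suspension, so $W:=\bigvee_{i=1}^r S^{n_i}\simeq\Sigma\bigl(\bigvee_{i=1}^r S^{n_i-1}\bigr)$ is a co-$H$-space, and hence so is its rationalization $W_{\Q}$. Since $X$ and $W$ are simply-connected of finite type, the 0-equivalence $h$ rationalizes to a homotopy equivalence $h_{\Q}\colon X_{\Q}\xrightarrow{\simeq}W_{\Q}$; as co-$H$-ness is a homotopy-invariant property, $X_{\Q}$ is a co-$H$-space, i.e. $X$ is a co-$H_0$-space.

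For (a)$\Rightarrow$(b) I would first pin down the rational cohomology. Because $X_{\Q}$ is a co-$H$-space, the reduced diagonal is rationally null-homotopic, so every cup product of positive-degree classes in $\widetilde{H}^*(X;\Q)$ vanishes. Choosing a homogeneous basis $x_1,\dots,x_r$ of $\widetilde{H}^*(X;\Q)$ with $\deg x_i=n_i$ and $2\le n_1\le\dots\le n_r$, this identifies $H^*(X;\Q)$ with $H^*(W;\Q)$ as graded algebras, since a wedge of spheres also has trivial reduced products. It then remains to realize this isomorphism by an honest map $X\to W$ that is a 0-equivalence. I would build it from individual maps $g_i\colon X\to S^{n_i}$ with $g_i^*(\iota_{n_i})$ a nonzero rational multiple of $x_i$: representing a multiple of $x_i$ by an integral class gives $X\to K(\Z,n_i)$, which I would lift through $S^{n_i}\to K(\Z,n_i)$ by obstruction theory. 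For $n_i$ odd this map is a rational equivalence and only torsion obstructions occur; for $n_i$ even the minimal model of $S^{n_i}$ is $(\Lambda(a_{n_i},b_{2n_i-1}),\,db=a^2)$, so the sole potentially-rational obstruction is the self cup-product $x_i^2\in H^{2n_i}(X;\Q)$, which vanishes \emph{precisely because $X$ is a co-$H$-space}. All remaining obstructions live in $H^{*}(X;\pi_*F)$ with $F$ the relevant fibre, whose homotopy is torsion; as $X$ is a finite complex there are finitely many of them, and I would clear them by passing to a further multiple of $x_i$, using the group (suspension) structure on $[X,S^{n_i}]$.

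Finally I would assemble the $g_i$ using the rational comultiplication $\psi_r\colon X_{\Q}\to\bigvee_{i=1}^r X_{\Q}$ followed by $\bigvee_i (g_i)_{\Q}$. The counit property of $\psi_r$ makes $\psi_r^*$ the fold map on $\bigoplus_{i} H^*(X;\Q)$, so the composite sends $\iota_{n_i}$ to a nonzero multiple of $x_i$ and is therefore a rational equivalence $X_{\Q}\to W_{\Q}$; one last lift through the rationalization $W\to W_{\Q}$, whose fibre has torsion homotopy groups, yields the desired 0-equivalence $X\to W$. The hard part is exactly these integral realization steps rather than the rational bookkeeping: the statement ``$X_{\Q}$ is a wedge of rational spheres'' is classical, but turning it into an honest 0-equivalence forces one to control the lifting obstructions through $S^{n}\to K(\Z,n)$ for even $n$ and through rationalization. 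The decisive input is that the co-$H$ hypothesis kills precisely the self-cup-product obstruction that would otherwise prevent an even-degree generator from being carried by a map into an even sphere, while everything else is torsion and is absorbed by the finiteness of $X$ together with the freedom to multiply by integers.
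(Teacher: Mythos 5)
The paper offers no proof of this proposition at all: it is quoted as a known theorem of Arkowitz and Curjel \cite[Theorem 2.5]{AC64}. So there is nothing in-paper to compare against, and your proposal has to stand on its own as a reconstruction of the classical argument. In outline it does: (b)$\Rightarrow$(a) by rationalizing the 0-equivalence (a 0-equivalence of simply-connected finite-type complexes is a rational homotopy equivalence, the same fact from \cite{HMR75Ch2} the paper invokes in Section 4) and transporting the co-$H$ structure of the rationalized wedge along the resulting homotopy equivalence; (a)$\Rightarrow$(b) by noting that rational cup products vanish, realizing integer multiples of a basis of $\widetilde{H}^*(X;\Q)$ by maps $g_i\colon X\to S^{n_i}$ (this is Serre's theorem on spherical classes, and your identification of the cup square as the unique rationally nontrivial obstruction for even $n_i$ is the correct key point), assembling via the rational comultiplication, and finally compressing the resulting map $X\to W_\Q$ to a genuine map $X\to W$. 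This is a sound plan and, as far as one can tell, close in spirit to the cited proof.

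There is, however, one step whose stated justification is wrong and needs repair: you propose to kill the torsion obstructions ``using the group (suspension) structure on $[X,S^{n_i}]$.'' No such group structure exists: $S^{n_i}$ is not an $H$-space for $n_i\neq 1,3,7$, and $X$ is not a co-$H$-space --- it is only \emph{rationally} co-$H$, which is the hypothesis you are trying to exploit, not something available integrally. The correct mechanism is the one the paper itself sets up in Section 5: multiply the class through the $H$-structure of $K(\Z,n_i)$, observe that the degree-$N$ map $\underline{N}$ of $S^{n_i}$ covers multiplication by $N$ on $K(\Z,n_i)$ and hence induces compatible self-maps of the Moore--Postnikov stages of $S^{n_i}\to K(\Z,n_i)$, so that the obstruction for the multiplied problem is the image of the old one under $(\underline{N})_*$ on the fibre's homotopy; then use that $(\underline{N})_*$ is multiplication by $N$ on these (torsion) groups when $4\mid N$, which is exactly Lemma \ref{Hilton-Milnor} and the induction pattern of Propositions \ref{rho X/G} and \ref{mu X/G}. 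The same caveat applies to your last step, lifting through $W\to W_\Q$: the fibre's homotopy groups are torsion but not finite, the obstruction classes do have finite order because $X$ is a finite complex, but killing them again requires postcomposition with wedge-wise degree maps of $W$ and this naturality, not an ambient group structure. With these substitutions your sketch becomes a complete proof; without them, the crucial ``pass to a multiple'' steps are unjustified.
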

	We will use the following properties of $(co$-$)H_0$-spaces. 
	\begin{pro}
		\label{degree}
		Let $X$ be a finite $(co$-$H)$-space of type $n_1,\dots,n_r$, then for each $i=1,\dots,r$, there are maps 
		\[f_i:S^{n_i}\to X\ \text{and}\ g_i:X\to S^{n_i}\]
		such that $g_i\circ f_i$ is an 0-equivalence. 
	\end{pro}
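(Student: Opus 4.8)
The plan is to treat the $H_0$ and co-$H_0$ cases as formal duals of one another, reducing both to a nondegeneracy statement for a rational (co)homology pairing. Fix $i$ and write $u$ for a generator of $H^{n_i}(S^{n_i};\Q)$. A self-map of $S^{n_i}$ is a $0$-equivalence precisely when it has nonzero rational degree, and for $\varphi=g_i\circ f_i$ this degree equals $\langle g_i^*u,\,f_{i*}[S^{n_i}]\rangle$, the evaluation of the class $g_i^*u\in H^{n_i}(X;\Q)$ against the Hurewicz image $f_{i*}[S^{n_i}]\in H_{n_i}(X;\Q)$. So I would reduce the proposition to: for each $i$, produce a cohomology class realized by some $g_i\colon X\to S^{n_i}$ and a homology class realized as $f_{i*}[S^{n_i}]$ for some $f_i\colon S^{n_i}\to X$, whose pairing is nonzero. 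Since $X$ is a finite complex, $H^{n_i}(X;\Q)$ and $H_{n_i}(X;\Q)$ are finite-dimensional and dual under evaluation, so this pairing is nondegenerate.

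For the $H_0$ case, Proposition \ref{H_0} supplies a $0$-equivalence $\phi\colon P=\prod_j S^{n_j}\to X$. Taking $f_i=\phi\circ\iota_i$ with $\iota_i$ the inclusion of the $i$-th factor, the class $a_i:=f_{i*}[S^{n_i}]=\phi_*\iota_{i*}[S^{n_i}]$ is nonzero, because $\iota_{i*}[S^{n_i}]$ is a nonzero element of $H_{n_i}(P;\Q)$ and $\phi_*$ is an isomorphism. For the other factor I would use that, since $n_i$ is odd, the rationalization of $S^{n_i}$ is $K(\Q,n_i)$; hence for the finite complex $X$ every class of $H^{n_i}(X;\Q)$ is, up to a nonzero integer multiple, of the form $g^*u$ for some $g\colon X\to S^{n_i}$, so the realizable classes span $H^{n_i}(X;\Q)$. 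Nondegeneracy then yields $b\in H^{n_i}(X;\Q)$ with $\langle b,a_i\rangle\neq 0$; realizing a multiple of $b$ by $g_i$ makes $g_i\circ f_i$ of nonzero rational degree.

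The co-$H_0$ case is dual. Proposition \ref{co-H_0} supplies a $0$-equivalence $\psi\colon X\to W=\bigvee_j S^{n_j}$; composing with the collapse $q_i\colon W\to S^{n_i}$ onto the $i$-th summand gives $g_i=q_i\circ\psi$ with $b_i:=g_i^*u=\psi^*\gamma_i\neq 0$, where $\gamma_i$ generates the $i$-th summand's cohomology. The role played above by $K(\Q,n_i)$ is now played by the surjectivity of the rational Hurewicz homomorphism of a co-$H_0$-space, giving that $H_{n_i}(X;\Q)$ is spanned by spherical classes. To establish this I would invoke Proposition \ref{modp uni} and Lemma \ref{inverse}: both $X$ and $W$ are finite co-$H_0$-spaces, hence $0$-universal, so there is a $0$-equivalence $\eta\colon W\to X$; the classes $(\eta\circ\mathrm{in}_j)_*[S^{n_j}]$ with $n_j=n_i$ are spherical and span $H_{n_i}(X;\Q)$, since $\eta_*$ is an isomorphism and the fundamental classes of the summands span $H_{n_i}(W;\Q)$. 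Choosing a homology class pairing nontrivially with $b_i$ and realizing an integer multiple of it (using that $\pi_{n_i}(X)$ is an abelian group, as $n_i\ge 3$) produces the required $f_i$.

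The main obstacle I anticipate is precisely the point that forces this pairing argument instead of a direct appeal to a rational inverse: although $\phi$ (resp.\ $\psi$) admits a $0$-equivalence inverse $h$ by Lemma \ref{inverse}, the naive composite $\mathrm{pr}_i\circ h\circ\phi\circ\iota_i$ need not be a nonzero-degree self-map of $S^{n_i}$, because the induced automorphism of rational (co)homology may mix generators of equal degree (for instance permute two classes of degree three), so the relevant diagonal entry can vanish. Decoupling the construction of $f_i$ and $g_i$ and relying only on nondegeneracy of the evaluation pairing together with the two realizability statements above circumvents this, and is the step I would be most careful to get right.
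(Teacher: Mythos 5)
Your proposal is correct, but it takes a genuinely different route from the paper at the crucial step. The paper also begins with the structural equivalences, but it invokes $0$-universality (Proposition \ref{modp uni}) together with Lemma \ref{inverse} to get $0$-equivalences in \emph{both} directions, $f\colon\prod_i S^{n_i}\to X$ and $g\colon X\to\prod_i S^{n_i}$; rationally the composite $g\circ f$ is recorded as an invertible matrix $GF$ on the degree-$n_i$ generators, and the generator-mixing problem you single out at the end of your proposal is then solved by a linear-algebra correction: an integral matrix $H=A(GF)^{-1}$, with $A$ a suitable diagonal integer matrix, is realized by a self-map $h$ of the product, so that $h\circ g\circ f$ becomes diagonal with nonzero entries, and $f_i$, $g_i$ are obtained by restricting to the $i$-th factor on the two sides. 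You instead decouple $f_i$ from $g_i$: one of the two maps comes from the equivalence of Proposition \ref{H_0} or \ref{co-H_0} composed with a factor inclusion or collapse, the other from a realizability statement (sphericity of odd-degree rational cohomology classes of a finite complex in the $H_0$ case; spanning of $H_{n_i}(X;\Q)$ by spherical classes via the backward equivalence $\eta\colon W\to X$ of Lemma \ref{inverse} in the co-$H_0$ case), and nondegeneracy of the evaluation pairing replaces the matrix correction. What each buys: your argument needs Lemma \ref{inverse} only in the co-$H$ case and avoids matrix bookkeeping; the paper's argument is uniform in the two cases and stays entirely inside its own universality machinery. The one step you should justify rather than assert is the $H_0$-case realizability claim; it is a classical fact of Serre, and it does follow from $S^{n_i}_{(0)}\simeq K(\Q,n_i)$ as you say, e.g.\ because $X$ is compact, so any map $X\to S^{n_i}_{(0)}$ factors through a finite stage of the telescope of degree maps modelling the rationalization; alternatively it can be proved by the obstruction argument with Lemma \ref{Hilton-Milnor} that the paper itself uses in Proposition \ref{rho X/G}.
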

	\begin{proof}
		We will prove the proposition for $H_0$-spaces and the proof for $co$-$H_0$-space is similar. By Proposition \ref{modp uni}, $X$ is 0-universal. So by Lemma \ref{inverse} and Proposition \ref{H_0}, there are 0-equivalences 
		\[f:\prod_{i=1}^r S^{n_i}\to X\ \text{and}\ g:X\to\prod_{i=1}^r S^{n_i}\]
		Since both maps are rational homotopy equivalence (cf. \cite[Theorem 1.14]{HMR75Ch2}), they are completely determined by matrixes \[F,G\in GL_r(\mathbb{Q})\]
		So we obtain an inverse matrix $H'\in GL_r(\mathbb{Q})$ of $GF$. Notice that there exists a diagonal matrix $A=diag(m_1,\dots,m_r)$ such that 
		\[H=AH'\in M_r(\mathbb{Z})\]
		which is invertible and determines a 0-equivalence 
		\[h:\prod_{i=1}^r S^{n_i}\to\prod_{i=1}^r S^{n_i}\]
		Then $h\circ g\circ f:\prod_{i=1}^r S^{n_i}\to\prod_{i=1}^r S^{n_i}$  is determined by $HGF=AH'GF=A$, which is a product of 0-equivalences on $S^{n_i}$. Thus $f_i$ and $g_i$ are obtained by compositions
		\[S^{n_i}\to \prod_{i=1}^r S^{n_i}\stackrel{h\circ f}{\longrightarrow}X\ \text{and}\ X\stackrel{g}{\longrightarrow}\prod_{i=1}^r S^{n_i}\to S^{n_i}\] 
		and $g_i\circ f_i$ is a degree $m_i$ map, hence a 0-equivalence. 
	\end{proof}

	\section{Proof of Theorem \ref{main} for ${H}_0$-spaces}
	
	First of all, we set notation that we are going to use throughout this and next section. Let $G$ be a finite group, and let $X$ be a simply-connected finite $G$-complex. Let $q\colon X\to X/G$ denote the projection. We consider a condition on the action of $G$ on $X$.
	
	\begin{lem}
		\label{action}
		The following two statements are equivalent:
		\begin{enumerate}
			\item [(a)] 
			the map $q^*\colon H^*(X/G;\Q)\to H^*(X;\Q)$ is an isomorphism;
			
			\item [(b)] 
			the action of $G$ on $H^*(X;\Q)$ is trivial;
		\end{enumerate}
	\end{lem}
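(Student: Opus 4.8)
The plan is to identify the image of $q^*$ with the $G$-invariant subalgebra $H^*(X;\Q)^G$ and read off both implications at once. The essential tool is the transfer homomorphism attached to the finite group action, which is available because $|G|$ is invertible in $\Q$. Concretely, I expect to prove the sharper statement that $q^*$ restricts to an isomorphism
\[
q^*\colon H^*(X/G;\Q)\xrightarrow{\ \cong\ }H^*(X;\Q)^G,
\]
after which the stated equivalence becomes purely formal.

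First I would record the easy inclusion $\mathrm{im}(q^*)\subseteq H^*(X;\Q)^G$. Since $q$ is the orbit map we have $q\circ g=q$ for every $g\in G$, hence $g^*\circ q^*=(q\circ g)^*=q^*$; thus every class $q^*\alpha$ is fixed by the $G$-action. Next I would bring in the transfer $\tau\colon H^*(X;\Q)\to H^*(X/G;\Q)$ together with the two standard identities $\tau\circ q^*=|G|\cdot\mathrm{id}$ and $q^*\circ\tau=\sum_{g\in G}g^*$. Because $|G|$ is a unit in $\Q$, the first identity forces $q^*$ to be injective; applying the second to an arbitrary invariant class $\beta\in H^*(X;\Q)^G$ gives $q^*\bigl(\tfrac{1}{|G|}\tau\beta\bigr)=\tfrac{1}{|G|}\sum_{g}g^*\beta=\beta$, so $q^*$ surjects onto the invariants. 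Combining the inclusion with this surjectivity yields the displayed isomorphism.

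With this isomorphism in hand, the equivalence is immediate. Condition (a) asserts that $q^*$ is surjective onto \emph{all} of $H^*(X;\Q)$, which by the displayed identification happens exactly when $H^*(X;\Q)^G=H^*(X;\Q)$, i.e. when every $g^*$ fixes every class; this is precisely the triviality of the $G$-action on $H^*(X;\Q)$, which is condition (b). Hence (a) $\Leftrightarrow$ (b).

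The only point requiring care is setting up the transfer for the $G$-complex at hand. For the free actions arising from universal covers, which is the case actually used in Theorem \ref{main}, the map $q$ is a finite regular covering and the covering transfer supplies the two identities directly. To state the lemma for an arbitrary finite $G$-complex I would instead use the chain-level averaging operator $\tfrac{1}{|G|}\sum_{g\in G}g$ acting on $C^*(X;\Q)$, invoking semisimplicity of $\Q[G]$ so that formation of $G$-invariants is exact and commutes with passage to cohomology. Everything downstream of the identification $q^*\colon H^*(X/G;\Q)\cong H^*(X;\Q)^G$ is formal, so this transfer step is the sole substantive ingredient.
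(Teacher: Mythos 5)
Your proposal is correct and follows essentially the same route as the paper: the paper's entire proof is to cite Hatcher's Proposition 3G.1 for the identification of $q^*\colon H^*(X/G;\Q)\to H^*(X;\Q)$ with the inclusion $H^*(X;\Q)^G\hookrightarrow H^*(X;\Q)$, which is precisely the statement you establish by hand via the transfer identities $\tau\circ q^*=|G|\cdot\mathrm{id}$ and $q^*\circ\tau=\sum_{g\in G}g^*$. The only difference is that you supply the proof of this input rather than citing it, after which both arguments conclude identically.
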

	
	\begin{proof}
		By \cite[Proposition 3G.1]{Hatcher02}, $H^*(X/G;\Q)$ is isomorphic to the invariant ring $H^*(X;\Q)^G$ such that the map $q^*\colon H^*(X/G;\Q)\to H^*(X;\Q)$ is identified with the inclusion $H^*(X;\Q)^G\to H^*(X;\Q)$. Then (a) and (b) are equivalent.
	\end{proof}

	Hereafter, we assume that the action of $G$ on $X$ satisfies either of the equivalent conditions in Lemma \ref{action}.

	In particular, suppose that $X$ is an $H_0$-space in the rest of this section. We further assume that for some $n\ge 2$, there are maps $\epsilon\colon S^{2n-1}\to X$ and $\rho\colon X\to S^{2n-1}$ such that $\rho\circ\epsilon\colon S^{2n-1}\to S^{2n-1}$ is a 0-equivalence. If $H^{2n-1}(X;\Q)\ne 0$, then these maps do exist. By composing with a self-map of $S^{2n-1}$ of degree -1 if necessary, we may assume $\rho\circ\epsilon\colon S^{2n-1}\to S^{2n-1}$ is of positive degree $M$.

	Let $Y^m$ denote the $m$-skeleton of a CW complex $Y$.
	\begin{lem}
		\label{mu}
		For some positive integer $N$, there is a homotopy commutative diagram
		\[
		\xymatrix{
			X\vee S^{2n-1}\ar[r]^(.65){1+N\epsilon}\ar[d]&X\ar@{=}[d]\\
			X\times S^{2n-1}\ar[r]^(.55)\mu&X.
		}
		\]
	\end{lem}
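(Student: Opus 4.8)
The statement asks us to extend the map $1+N\epsilon\colon X\vee S^{2n-1}\to X$ — the map restricting to $\mathrm{id}_X$ on $X$ and to $N\epsilon$ on $S^{2n-1}$ — over the inclusion $X\vee S^{2n-1}\hookrightarrow X\times S^{2n-1}$ to the desired $\mu$, for a suitable positive integer $N$. This is a relative extension problem for the finite relative CW pair $(X\times S^{2n-1},\,X\vee S^{2n-1})$, whose quotient is the smash product $X\wedge S^{2n-1}=\Sigma^{2n-1}X$. The plan is to run obstruction theory, to show that the obstructions are torsion because they vanish rationally (this is where the $H_0$-hypothesis enters), and then to kill this torsion by precomposing with a degree-$N$ self-map of the sphere factor, exploiting that such a map multiplies the relevant relative cohomology by $N$.

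First I would record the rational input. Since $X$ is an $H_0$-space, its rationalization $X_\Q$ is an $H$-space; writing $m_\Q\colon X_\Q\times X_\Q\to X_\Q$ for the multiplication and $r\colon X\to X_\Q$ for the rationalization, the composite $m_\Q\circ\bigl(r\times(r\circ\epsilon)\bigr)\colon X\times S^{2n-1}\to X_\Q$ restricts on the wedge to $r\circ(1+\epsilon)$ and so solves the extension problem rationally. Next I would set up the obstruction theory by lifting through the Postnikov tower of $X$. As $X$ is simply connected and finite, the pair has finitely many relative cells, all arising from a cell of $X$ smashed with the top cell of $S^{2n-1}$, and the obstruction to extending over the stage-$q$ section, given an extension over the stage-$(q-1)$ section, is a class in
\[
H^{q+1}(X\times S^{2n-1},\,X\vee S^{2n-1};\pi_q(X))\cong\widetilde{H}^{\,q+1-(2n-1)}(X;\pi_q(X)),
\]
which is finitely generated. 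More precisely, whether a stage-$(q-1)$ extension can be improved to stage $q$ is governed by the image of this class in the cokernel of the difference operation coming from $H^{q-1}(X\times S^{2n-1},\,X\vee S^{2n-1};\pi_{q-1}(X))$; by naturality of obstructions under $r$ together with the rational solvability above, this image is torsion.

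Finally comes the scaling step. Let $\delta_N\colon S^{2n-1}\to S^{2n-1}$ have degree $N$, so that $1+N\epsilon=(1+\epsilon)\circ(\mathrm{id}_X\vee\delta_N)$ and the self-map $\mathrm{id}_X\times\delta_N$ carries the pair to itself, inducing $\mathrm{id}_X\wedge\delta_N$ on $\Sigma^{2n-1}X$, which acts as multiplication by $N$ on every relative cohomology group above. Hence the governing class for $1+N\epsilon$ at each stage is $N$ times the corresponding class for $1+\epsilon$. Since there are only finitely many stages and each governing class is torsion, choosing $N$ to be a common multiple of their orders makes all of them vanish, so $1+N\epsilon$ lifts through the whole tower and produces $\mu$.

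The main obstacle is the bookkeeping in this last step. Because higher obstructions depend on the choices of lower extensions, one cannot simply invoke ``the obstruction is torsion'' naively; one must phrase the induction so that at each stage the genuinely choice-independent invariant — the class in the cokernel of the difference operation — is what gets multiplied by $N$, and verify that naturality propagates the factor of $N$ through the entire tower rather than only through the primary obstruction. Making this propagation precise, so that a single $N$ works simultaneously at all stages, is the delicate point of the argument.
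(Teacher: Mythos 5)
Your overall architecture coincides with the paper's: run obstruction theory for extending over the pair $(X\times S^{2n-1},X\vee S^{2n-1})$, identify the relative obstruction groups with $H^{*}(X\wedge S^{2n-1};\pi_{*}(X))$, use the fact that precomposition with $\mathrm{id}_X\times\delta_N$ acts on these groups as multiplication by $N$, and finish with torsion obstructions plus finite dimensionality. But there is a genuine gap in your torsion claim. You deduce that the obstructions are torsion from the existence of \emph{one} rational solution, $m_\Q\circ\bigl(r\times(r\circ\epsilon)\bigr)$, plus naturality under $r$. That does not suffice: the stage-$q$ obstruction class -- even its image in the cokernel of the difference operation -- still depends on the partial extension chosen at stages $<q$, and the rational solution need not be compatible with the rationalization of \emph{your} integrally chosen partial extension; so nothing forces your obstruction to die rationally. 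What is needed is a statement valid for \emph{every} partial extension. The paper supplies exactly this: the stage-$m$ obstruction factors through the $k$-invariant $k_m$ of $X$, and all $k$-invariants of an $H_0$-space have finite order (Proposition \ref{H_0}(c)); equivalently, $X_\Q$ is a product of rational Eilenberg--MacLane spaces, so \emph{all} rational obstructions vanish no matter which choices were made. Your $H$-space input has to be converted into this choice-free form before the naturality argument closes.

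The second gap is the one you flag yourself and leave open: choosing a single $N$ at the end as a common multiple of the orders of the governing classes of $1+\epsilon$ ``at all stages'' does not parse, because the stage-$q$ class for $1+\epsilon$ is only defined after $1+\epsilon$ has been extended through stage $q-1$, which may be impossible; there is no well-defined finite list of classes to clear simultaneously. The paper's induction dissolves this entirely: at stage $m$ one holds a \emph{concrete} extension of $i_m+N_m\epsilon$; precomposing it with $\mathrm{id}\times\delta_a$ gives a concrete extension of $i_m+aN_m\epsilon$ whose obstruction is $a\,\mathfrak{o}(1)$ by the very naturality you noted; one takes $a$ to be the (finite) order of $\mathfrak{o}(1)$, sets $N_{m+1}=aN_m$, and finite dimensionality of $X$ terminates the induction, with the final $N$ the product of the accumulated factors. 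No single $N$ fixed in advance, and no choice-independence of obstructions, is ever needed -- that multiplicative bookkeeping is precisely the missing piece of your argument.
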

	
	\begin{proof}
		Suppose a map $i_m+N_m\epsilon\colon X^{m-1}\vee S^{2n-1}\to X$ extends over $X^{m-1}\times S^{2n-1}$ for a positive integer $N_m$, where $i_m:X^m\to X$ is the inclusion. Consider the obstruction class
		\[
		\mathfrak{o}(a)\in H^{m+1}(X\times S^{2n-1},X\vee S^{2n-1};\pi_m(X))
		\]
		for extending a map $1+aN_m\epsilon\colon X^m\vee S^{2n-1}\to X$ over $X^m\times S^{2n-1}$, where $a\in\Z$. There is an isomorphism
		\[
		H^{m+1}(X\times S^{2n-1},X\vee S^{2n-1};\pi_m(X))\cong H^{m+1}(X\wedge S^{2n-1};\pi_m(X))
		\]
		which is natural with respect to self-maps of $S^{2n-1}$. Then in particular, $\mathfrak{o}(a)=a\mathfrak{o}(1)$. On the other hand, $\mathfrak{o}(1)$ factors through the $k$-invariants $k_m$ of $X$. Since $X$ is an ${H}_0$-space,  $k_m$ is of finite order and so is $\mathfrak{o}(1)$. Thus since $X$ is finite dimensional, the proof is complete.
	\end{proof}

	Define a map $f\colon X\to X$ by the composition
	\[
	X\xrightarrow{\Delta}X\times X\xrightarrow{1\times\rho}X\times S^{2n-1}\xrightarrow{\mu}X
	\]
	where $\Delta$ is the diagonal map and $\mu$ is as in Lemma \ref{mu}. Clearly, $f$ is an isomorphism in $\pi_*$ for $*<2n-1$. On the other hand, since $\rho\circ\epsilon\colon S^{2n-1}\to S^{2n-1}$ is of positive degree, we have $f_*(\epsilon)=(MN+1)\epsilon\in\pi_{2n-1}(X)$. Then $f$ is not an isomorphism in $\pi_{2n-1}$ because $\epsilon\in\pi_{2n-1}(X)$ is of infinite order. Thus
	\[
	N\mathcal{E}(X)\ge 2n-1
	\]
	We will get a lower bound for $N\mathcal{E}(X/G)$ basically the same argument. Then we need to construct maps
	\[
	\bar{\rho}\colon X/G\to S^{2n-1}\quad\text{and}\quad\bar{\mu}\colon X/G\times S^{2n-1}\to X/G
	\]
	having properties analogous to $\rho\colon X\to S^{2n-1}$ and $\mu\colon X\times S^{2n-1}\to X$.

	First, we construct a map $\bar{\rho}\colon X/G\to S^{2n-1}$. For an integer $k$, let $\underline{k}\colon S^{2n-1}\to S^{2n-1}$ denote a map of degree $k$. Then for any $\alpha\in\pi_{i}(S^{2n-1})$, we can consider $\underline{k}\circ\alpha$, which possibly differs from $k\alpha$. However, we have:

	\begin{lem}
		\label{Hilton-Milnor}
		For any $\alpha\in\pi_i(S^{2n-1})$ with $i>2n-1$, there is a positive integer $k$ such that $\underline{k}\circ\alpha=0$.
	\end{lem}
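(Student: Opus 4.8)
The plan is to combine Serre's finiteness theorem with the generalized distributive law supplied by the Hilton--Milnor theorem, and then to annihilate the resulting correction terms by a divisibility argument. First I would record the reduction: since $2n-1$ is odd and $i>2n-1$, Serre's finiteness theorem gives that $G:=\pi_i(S^{2n-1})$ is a finite abelian group; let $e$ denote its exponent. Post-composition $\underline{k}_*\colon G\to G$ is a group homomorphism, and $\underline{k}\circ\alpha$ is precisely $\underline{k}_*(\alpha)$, so it suffices to produce a positive $k$ with $\underline{k}_*(\alpha)=0$.

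The key input is the generalized distributive law. Writing $\iota$ for the identity of $S^{2n-1}$, one has $\underline{k}\simeq k\iota$ formed via the pinch map, and the failure of left distributivity is governed by James--Hopf invariants. Applying the Hilton--Milnor theorem to the pinch map yields an expansion of the form
\[
\underline{k}\circ\alpha \;=\; k\alpha \;+\; \sum_{j=2}^{J}\binom{k}{j}\,w_j\circ H_j(\alpha),
\]
where $H_j(\alpha)\in\pi_i(S^{j(2n-2)+1})$ is the $j$-th James--Hopf invariant and $w_j\in\pi_{j(2n-2)+1}(S^{2n-1})$ is the associated iterated Whitehead product. This sum is finite: the term $H_j(\alpha)$ lies below the connectivity of its target, hence vanishes, unless $i\ge j(2n-2)+1$, i.e. $j\le (i-1)/(2n-2)$. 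Setting $\gamma_j:=w_j\circ H_j(\alpha)\in G$ gives finitely many fixed elements of $G$, independent of $k$, with $e\gamma_j=0$.

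It then remains to choose $k$ killing every binomial coefficient modulo $e$. I would take $k=e^M$ with $M$ large: using $\binom{e^M}{j}=\tfrac{e^M}{j}\binom{e^M-1}{j-1}$ together with Kummer's carry count, for each prime $p\mid e$ one gets $v_p\!\left(\binom{e^M}{j}\right)\ge M-\log_2 J$, which exceeds the $p$-adic valuation of $e$ once $M$ is large enough. Hence $e\mid\binom{e^M}{j}$ for all $1\le j\le J$, so every summand $\binom{k}{j}\gamma_j$ vanishes in $G$, and the displayed formula yields $\underline{k}\circ\alpha=0$ with $k=e^M>0$.

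The main obstacle is the middle step: extracting from the Hilton--Milnor decomposition exactly the binomial-coefficient form of the deviation terms and justifying the finiteness of the expansion by dimension. Once that expansion is in hand, Serre finiteness is a black box and the concluding divisibility estimate is entirely routine.
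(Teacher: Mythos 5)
Your strategy is sound and shares its skeleton with the paper's proof: both combine Serre's finiteness theorem with Hilton's analysis of the failure of left distributivity for composition. The execution differs. The paper black-boxes the packaged statement of Hilton's theorem (\cite[Theorem 6.7]{Hilton55}), namely that $\underline{m}\circ\alpha=m\alpha$ whenever $4\mid m$, and simply takes $k=4\cdot\mathrm{ord}(\alpha)$ --- a three-line proof, at the cost of leaning on the odd-sphere-specific vanishing packaged into that theorem. You instead keep the whole deviation expansion and annihilate every term by pure divisibility with $k=e^M$; this uses nothing about odd spheres beyond the finiteness of $\pi_i(S^{2n-1})$, so it would apply verbatim to any sphere and any torsion class, and your Kummer-type estimate is correct. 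The one step needing repair is the middle one you yourself flag: the formula with a single left-normed bracket $w_j$ and coefficient exactly $\binom{k}{j}$ in each weight is \emph{not} a formal consequence of the Hilton--Milnor splitting; it is essentially Hilton's theorem again (the very result the paper cites). The raw splitting gives one term $\bar{w}\circ H_w(\alpha)$ per basic product $w$ in $k$ letters, and in weights $\ge 4$ basic products of several bracket shapes occur (e.g.\ $[[\iota,\iota],[\iota,\iota]]$, not just left-normed ones), so collecting them into your displayed form requires either citing that precise version (Hilton, or Boardman--Steer's left distributivity law with James--Hopf invariants) or weakening the claim to what your argument actually needs: the deviation $\underline{k}\circ\alpha-k\alpha$ is a finite sum $\sum_s c_s(k)\,\gamma_s$ with $\gamma_s\in\pi_i(S^{2n-1})$ independent of $k$ and each $c_s$ an integer-valued polynomial in $k$ vanishing at $k=0$ and $k=1$; every such $c_s$ is a $\Z$-linear combination of the $\binom{k}{j}$ with $j\ge 2$, so your choice $k=e^M$ with $M$ large still kills all terms.
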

	
	\begin{proof}
		Let $\alpha\in\pi_i(S^{2n-1})$ with $i>2n-1$. Then $\alpha$ is of finite order. On the other hand, by \cite[Theorem 6.7]{Hilton55}, $\underline{m}\circ\alpha=m\alpha$ whenever $m$ is divisible by 4. Then the proof is done by setting $k=4\cdot\mathrm{ord}(\alpha)$.
	\end{proof}

	\begin{pro}
		\label{rho X/G}
		For some positive integer $k$, there is a homotopy commutative diagram
		\[
		\xymatrix{
			X\ar[r]^{\rho}\ar[d]_q&S^{2n-1}\ar[d]^{\underline{k}}\\
			X/G\ar[r]^{\bar{\rho}}&S^{2n-1}.
		}
		\]
	\end{pro}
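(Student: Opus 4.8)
The plan is to realize the homotopy class $[\underline{k}\rho]\in[X,S^{2n-1}]$ as a pullback $q^*[\bar\rho]$ from $X/G$; equivalently, to construct $\bar\rho\colon X/G\to S^{2n-1}$ together with a homotopy $\bar\rho\circ q\simeq\underline{k}\rho$. Since $q$ is a finite covering, I fix a CW structure on $X/G$ for which $q$ is cellular (so that $X$ carries the lifted free $G$-CW structure) and build $\bar\rho$ over the skeleta $(X/G)^m$, inductively producing a homotopy between $\bar\rho\circ q$ and $\underline{k}\rho$ on $X^m$. Because $S^{2n-1}$ is simply-connected, $\pi_1(X/G)=G$ acts trivially on each $\pi_i(S^{2n-1})$, so the relevant obstructions lie in the untwisted groups $H^{i+1}(X/G;\pi_i(S^{2n-1}))$ and I may run ordinary obstruction theory. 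As $S^{2n-1}$ is $(2n-2)$-connected, $\bar\rho$ may be taken constant below dimension $2n-1$, and the first genuine obstruction appears in dimension $2n-1$.

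For the primary obstruction I must check that the integral class $(\underline{k}\rho)^*u=k\,\rho^*u\in H^{2n-1}(X;\Z)$, where $u$ generates $H^{2n-1}(S^{2n-1};\Z)$, lies in the image of $q^*\colon H^{2n-1}(X/G;\Z)\to H^{2n-1}(X;\Z)$. By Lemma \ref{action}, $q^*$ is a rational isomorphism, so $\rho^*u$ is the image of a unique rational class $\bar v\in H^{2n-1}(X/G;\Q)$. Since $H^{2n-1}(X/G;\Z)$ is finitely generated, some multiple of $\bar v$ is integral, and since the torsion subgroup of $H^{2n-1}(X;\Z)$ is finite, taking $k$ divisible both by the resulting denominator and by the torsion exponent yields $k\,\rho^*u=q^*(w)$ for an integral $w$. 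By the Hopf--Whitney theorem one has $[(X/G)^{2n-1},S^{2n-1}]\cong H^{2n-1}((X/G)^{2n-1};\Z)$ and likewise over $X$, so this matching of classes produces $\bar\rho$ on the $(2n-1)$-skeleton together with the desired homotopy there.

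It remains to extend over the higher skeleta. The obstructions both to extending $\bar\rho$ and to propagating the homotopy $\bar\rho\circ q\simeq\underline{k}\rho$ live in $\pi_i(S^{2n-1})$ with $i>2n-1$, all of which are finite. The key point is naturality: post-composition with $\underline{k}\colon S^{2n-1}\to S^{2n-1}$ acts on these coefficient groups by $\alpha\mapsto\underline{k}\circ\alpha$, so every obstruction attached to $\underline{k}\rho$ has the form $\underline{k}\circ\alpha$ for some $\alpha\in\pi_i(S^{2n-1})$. By Lemma \ref{Hilton-Milnor} each such class is annihilated after a further post-composition by a suitable integer, and because $X$ is finite-dimensional only finitely many coefficient groups occur. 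Choosing $k$ divisible by all of these integers, as well as by the denominator and torsion exponent from the primary step, makes every obstruction vanish simultaneously and completes the construction.

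The main obstacle is the third step: one must verify that the higher difference-obstructions are genuinely of the form $\underline{k}\circ\alpha$, so that Lemma \ref{Hilton-Milnor} applies, and that a single $k$ can be chosen to clear them all at once while remaining compatible with the integrality and torsion conditions imposed in dimension $2n-1$. In other words, the delicate part is reconciling the two distinct roles of $k$---clearing rational denominators and torsion in the bottom degree, versus trivializing finite higher homotopy via $\underline{k}$---and checking that passing to a common multiple does not reintroduce lower obstructions.
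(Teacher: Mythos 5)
Your overall strategy (ordinary obstruction theory into the simple target $S^{2n-1}$, rational triviality of the bottom obstruction via Lemma \ref{action}, finiteness of $\pi_i(S^{2n-1})$ plus Lemma \ref{Hilton-Milnor}) is the same circle of ideas as the paper's, but there are two genuine gaps in the execution. First, your claim that beyond the $(2n-1)$-skeleton ``the obstructions \dots live in $\pi_i(S^{2n-1})$ with $i>2n-1$, all of which are finite'' fails at the very next stage: extending $\bar\rho$ from $(X/G)^{2n-1}$ over $(X/G)^{2n}$ carries an obstruction with coefficients $\pi_{2n-1}(S^{2n-1})\cong\Z$, which is infinite. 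This is exactly the step the paper isolates: the obstruction class lives in $H^{2n}(C_q;\Z)$, has finite order because $H^{2n}(C_q;\Q)=0$ by Lemma \ref{action}, and is then killed by composing with a further degree map. Your Hopf--Whitney matching produces $\bar\rho$ only on the $(2n-1)$-skeleton and does not by itself supply this extension (the globality of $w$ could be exploited here via $K(\Z,2n-1)$, but you do not do so); moreover, killing that obstruction requires modifying $\bar\rho$ on $(2n-1)$-cells, which changes its Hopf--Whitney class on $(X/G)^{2n-1}$ and can destroy the homotopy $\bar\rho\circ q\simeq\underline{k}\rho$ already built on $X^{2n-1}$ --- an interface problem that your two-track construction (map on $X/G$, homotopy on $X$) never addresses.

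Second, the assertion that ``every obstruction attached to $\underline{k}\rho$ has the form $\underline{k}\circ\alpha$'' is unjustified beyond the primary obstruction, as you yourself flag. The obstruction at stage $m+1$ is attached to the chosen partial extension over stage $m$, and that partial extension does not factor through $\underline{k}$, so naturality gives nothing; fixing a single $k$ in advance therefore does not make the higher obstructions vanish. The repair is the paper's inductive scheme, which also dissolves the interface problem above: replace the two separate problems by the single relative extension problem over the pair $(M_q,X)$, where $M_q$ is the mapping cylinder of $q$ (an extension of $\underline{k}\circ\rho$ over $M_q\simeq X/G$ is precisely a map $\bar\rho$ together with the homotopy $\bar\rho\circ q\simeq\underline{k}\circ\rho$), and build $k$ multiplicatively: given a partial extension $(\underline{k_m}\circ\rho)_m$ over $X\cup M_q^m$, post-compose this partial extension itself with a new degree map $\underline{k_{m+1}}$, chosen by Lemma \ref{Hilton-Milnor} to annihilate all of $\pi_m(S^{2n-1})$ under composition; then the new obstruction cocycle has values $\underline{k_{m+1}}\circ(\text{old values})=0$ identically, and the map being extended restricts on $X$ to $\underline{k_{m+1}k_m}\circ\rho$. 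The final $k$ is the product of the stage-wise integers, determined only at the end of the finite induction, not a number chosen at the outset.
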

	
	\begin{proof}
		We induct on the skeleta of CW pairs $(M_q,X)$, where $M_q$ is the mapping cylinder of $q$ and has the homotopy type of $X/G$. Clearly, the induction begins with the $X\cup M_q^{2n-1}$. Consider the obstruction class
		\[
		\mathfrak{o}^{2n}(a)\in H^{2n}(C_q;\pi_{2n-1}(S^{2n-1}))
		\]
		for extending $(\underline{a}\circ\rho)_{2n-1}\colon X\cup M_q^{2n-1}\to S^{2n-1}$ over $X\cup M_q^{2n}$, where $(\underline{a}\circ\rho)_{2n-1}$ extends $\underline{a}\circ\rho$ and $C_q$ denotes the mapping cone of $q\colon X\to X/G$. Since $\pi_{2n-1}(S^{2n-1})\cong\Z$,
		\[
		H^{2n}(C_q;\pi_{2n-1}(S^{2n-1}))\otimes\Q\cong H^{2n}(C_q;\Q)
		\]
		and by Lemma \ref{action}, $H^{2n}(C_q;\Q)=0$. Then the obstruction class $\mathfrak{o}^{2n}(1)$ is of finite order. On the other hand, by naturality of the obstruction class, we have
		\[
		\mathfrak{o}^{2n}(a)=a\mathfrak{o}(1).
		\]
		Thus $\mathfrak{o}(k_{2n})=0$ for some positive integer $k_{2n}$, implying the existence of $(\underline{k_{2n}}\circ\rho)_{2n}\colon X\cup M_q^{2n}\to S^{2n-1}$ which extends $\underline{k_{2n}}\circ\rho$.

		Now we suppose that $(\underline{k_m}\circ\rho)_m\colon X\cup M_q^{m}\to S^{2n-1}$ extends $\underline{k_m}\circ\rho$ for some positive integer $k_m$, where $m>2n-1$. Consider the obstruction class
		\[
		\mathfrak{o}^{m+1}(b)\in H^{m+1}(C_q;\pi_m(S^{2n-1}))
		\]
		for extending $(\underline{b}\circ\underline{k_m}\circ\rho)_m\colon X\cup M_q^{m}\to S^{2n-1}$ over $X\cup M_q^{m+1}$. Since $\pi_m(S^{2n-1})$ is a finite abelian group, it follows from Lemma \ref{Hilton-Milnor} that there is a positive integer $k_{m+1}$ such that $\underline{k_{m+1}}\circ\alpha=0$ for each $\alpha\in\pi_m(S^{2n-1})$, implying $\mathfrak{o}^{m+1}(k_{m+1})=0$. Then $(\underline{k_{m+1}}\circ\underline{k_m}\circ\rho)_{m+1}\colon X\cup M_q^{2n-1}\to S^{2n-1}$ extends $\underline{k_{m+1}}\circ\underline{k_m}\circ\rho$. Thus since $M_q$ is of finite dimension, the proof is complete.
	\end{proof}

	Next, we construct a map $\bar{\mu}\colon X/G\times S^{2n-1}\to X/G$ by extending a map $1+l(q\circ\epsilon)\colon X/G\vee S^{2n-1}\to X/G$ for some integer $l$. To this end, we will perform obstruction theory, so we show a property of the obstruction class. Suppose that the map $i_m+l_m(q\circ\epsilon)\colon(X/G)^m\vee S^{2n-1}\to X/G$ over $(X/G)^m\times S^{2n-1}$, where $i_m\colon(X/G)^m\to X/G$ denotes the inclusion. Then we can define the obstruction class \cite{Baues77Ch4}
	\begin{equation}
		\label{obstruction}
		\mathfrak{o}^{m+1}(a)\in H^{m+1}(X/G\times S^{2n-1},X/G\vee S^{2n-1};\underline{\pi_m(X/G)})
	\end{equation}
	for extending $i_{m+1}+al_m(q\circ\epsilon)\colon(X/G)^{m+1}\vee S^{2n-1}\to X/G$ over $(X/G)^{m+1}\times S^{2n-1}$, where $\underline{\pi_*(X/G)}$ denotes the twisted coefficients associated to the canonical action of $G$ on $\pi_*(X/G)$.
	
	\begin{lem}
		\label{Kunneth}
		There is an isomorphism
		\[
		H^{m+1}(X/G\times S^{2n-1},X/G\vee S^{2n-1};\underline{\pi_m(X/G)})\cong H^{m-2n+2}(X/G,*;\underline{\pi_m(X/G)})\otimes H^{2n-1}(S^{2n-1})
		\]
		which is natural with respect to self-maps of $S^{2n-1}$.
	\end{lem}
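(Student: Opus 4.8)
The plan is to recognize the pair $(X/G\times S^{2n-1},\,X/G\vee S^{2n-1})$ as a product pair and then apply a relative cohomology Künneth theorem with local coefficients. The structural fact that makes everything go through is that $S^{2n-1}$ is simply-connected: the local system $\underline{\pi_m(X/G)}$ on the product is pulled back along the projection onto $X/G$, so $G=\pi_1(X/G)$ acts on it only through the first factor, while $S^{2n-1}$ carries constant coefficients.

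First I would record the identification of the subspace. With $*$ the basepoint of $X/G$ and $*_S$ that of $S^{2n-1}$, one has $X/G\vee S^{2n-1}=X/G\times\{*_S\}\cup\{*\}\times S^{2n-1}$, so the given pair is exactly the product pair $(X/G,*)\times(S^{2n-1},*_S)$. This is the target of the relative cohomology cross product
\[
\times\colon H^*(X/G,*;\underline{\pi_m(X/G)})\otimes H^*(S^{2n-1},*_S;\Z)\to H^*(X/G\times S^{2n-1},X/G\vee S^{2n-1};\underline{\pi_m(X/G)}),
\]
and the main step is to prove that this cross product is an isomorphism. Because $H^*(S^{2n-1},*_S;\Z)$ is free over $\Z$, being $\Z$ concentrated in degree $2n-1$, the Tor/Ext correction terms in the Künneth formula vanish and the cross product is an isomorphism. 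I would justify this at the chain level: passing to the universal covering $q\colon X\to X/G$, the total space over the product is $X\times S^{2n-1}$ with $G$ acting on the first factor only, and twisted cohomology is computed by applying $\mathrm{Hom}_{\Z G}(-,\pi_m(X/G))$ to the appropriate relative cellular chain complex of $X\times S^{2n-1}$. Choosing a CW structure on $X/G$ with a single $0$-cell and the minimal cell structure on $S^{2n-1}$, the relative chain complex splits $G$-equivariantly as $C_*(X,X^0)\otimes\tilde C_*(S^{2n-1})$, where $G$ acts trivially on $\tilde C_*(S^{2n-1})\cong\Z$ placed in degree $2n-1$. Since this second factor is $\Z$-free and concentrated in a single degree, applying $\mathrm{Hom}_{\Z G}(-,\pi_m(X/G))$ and taking cohomology merely shifts the degree by $2n-1$, yielding $H^{m+1}(\cdots)\cong H^{m-2n+2}(X/G,*;\underline{\pi_m(X/G)})\otimes H^{2n-1}(S^{2n-1})$.

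Naturality with respect to self-maps of $S^{2n-1}$ is then automatic: such a map acts only on the second tensor factor $H^{2n-1}(S^{2n-1})$, through its degree, because the cross product and the chain-level splitting are functorial in the sphere variable while the local system lives entirely on $X/G$. The step I expect to require the most care is the equivariant bookkeeping of coefficients, namely checking that the chain-level splitting is genuinely $\Z G$-linear and that $\tilde C_*(S^{2n-1})$ carries the trivial $G$-action; this is precisely what legitimizes the relative Künneth isomorphism with local coefficients in one variable and constant coefficients in the other.
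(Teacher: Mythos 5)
Your proposal is correct and, despite the initial packaging as a relative K\"unneth/cross-product statement, its actual justification is the same argument as the paper's: pass to the universal cover $X\times S^{2n-1}$ with $G$ acting only on the first factor, identify the relative cellular chain complex $G$-equivariantly as $C_*(X,q^{-1}(*))\otimes\tilde C_*(S^{2n-1})$ with the sphere factor a trivial $\Z G$-module concentrated in degree $2n-1$, and apply $\mathrm{Hom}_{\Z G}(-,\pi_m(X/G))$ to get the degree shift, with naturality in the sphere variable read off from the chain-level splitting. No gaps; this matches the paper's proof.
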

	
	\begin{proof}
		Let $C_*(Y)$ denotes the cellular chain complex of a CW complex $Y$. Let $W=X/G\vee S^{2n-1}$, and let $\widetilde{Y}$ denotes the universal covering space of space $Y$. Since $G$ acts freely on $X$, $C_*(X\times S^{2n-1})$ and $C_*(\widetilde{W})$ are free $\Z G$-modules. By definition, $H^{*+1}(X/G\times S^{2n-1},X/G\vee S^{2n-1};\underline{\pi_*(X/G)})$ is the cohomology of
		\[
		\mathrm{Hom}_G(C_{*+1}(X\times S^{2n+1})/C_{*+1}(\widetilde{W}),\underline{\pi_*(X/G)})
		\]
		where $\mathrm{Hom}_G(A,B)$ denotes the abelian group of $G$-equivariant maps between $\Z G$-modules $A$ and $B$. Then we consider the $\Z G$-module $C_{*+1}(X\times S^{2n+1})/C_{*+1}(\widetilde{W})$. Notice that $\widetilde{W}$ is obtained by attaching $S^{2n-1}$ to $X$ at each point of $q^{-1}(*)$. Then
		\[
		C_{*+1}(\widetilde{W})=C_{*+1}(X)\oplus(C_0(q^{-1}(*))\otimes C_{2n-1}(S^{2n-1}))
		\]
		and so we get an isomorphism of $\Z G$-modules
		\[
		C_{*+1}(X\times S^{2n+1})/C_{*+1}(\widetilde{W})\cong C_{*-2n+2}(X,q^{-1}(*))\otimes C_{2n-1}(S^{2n-1}).
		\]
		Thus the proof is complete.
	\end{proof}

	The following is immediate from Lemma \ref{Kunneth}.
	
	\begin{cor}
		\label{linearity}
		The obstruction class \eqref{obstruction} satisfies
		\[
		\mathfrak{o}^{m+1}(a)=a\mathfrak{o}^{m+1}(1).
		\]
	\end{cor}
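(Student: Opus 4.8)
The plan is to exploit the naturality asserted in Lemma \ref{Kunneth} together with the cogroup structure on $S^{2n-1}$. The crucial observation is that the parameter $a$ enters only through the $S^{2n-1}$-coordinate of the map being extended. Since $2n-1\ge 3$, the sphere $S^{2n-1}$ is a co-$H$-space, so for the degree-$a$ self-map $\underline{a}\colon S^{2n-1}\to S^{2n-1}$ we have $\beta\circ\underline{a}=a\beta$ for every $\beta\in\pi_{2n-1}(X/G)$. Consequently the map $i_{m+1}+al_m(q\circ\epsilon)$ factors as
\[
(X/G)^{m+1}\vee S^{2n-1}\xrightarrow{1\vee\underline{a}}(X/G)^{m+1}\vee S^{2n-1}\xrightarrow{i_{m+1}+l_m(q\circ\epsilon)}X/G,
\]
since precomposing with $1\vee\underline{a}$ leaves the first summand as $i_{m+1}$ and sends the second summand to $l_m(q\circ\epsilon)\circ\underline{a}=al_m(q\circ\epsilon)$. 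Thus the $a$-th extension problem is obtained from the first one by precomposition with the self-map $1\times\underline{a}$ of the pair $(X/G\times S^{2n-1},X/G\vee S^{2n-1})$; note that $1\times\underline{a}$ induces the identity on $\pi_1(X/G)$, so the twisted coefficient system $\underline{\pi_m(X/G)}$ is preserved.

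First I would invoke the naturality of the obstruction class under this map of pairs. Since obstruction classes pull back along maps of CW pairs, the above factorization yields
\[
\mathfrak{o}^{m+1}(a)=(1\times\underline{a})^*\mathfrak{o}^{m+1}(1).
\]
Then I would transport this identity through the isomorphism of Lemma \ref{Kunneth}. Because that isomorphism is natural with respect to self-maps of $S^{2n-1}$, the induced map $(1\times\underline{a})^*$ corresponds to $1\otimes\underline{a}^*$ on $H^{m-2n+2}(X/G,*;\underline{\pi_m(X/G)})\otimes H^{2n-1}(S^{2n-1})$. Since a degree-$a$ self-map of $S^{2n-1}$ acts as multiplication by $a$ on $H^{2n-1}(S^{2n-1})$, the operator $1\otimes\underline{a}^*$ is precisely multiplication by $a$, giving $\mathfrak{o}^{m+1}(a)=a\,\mathfrak{o}^{m+1}(1)$.

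The step requiring the most care is verifying that the naturality of the Künneth isomorphism in Lemma \ref{Kunneth} matches the pullback $(1\times\underline{a})^*$ with the operator $\underline{a}^*$ on the tensor factor $H^{2n-1}(S^{2n-1})$. This is exactly the content encoded in the clause ``natural with respect to self-maps of $S^{2n-1}$'', so the verification reduces to tracing the chain-level construction of that isomorphism and checking that $1\times\underline{a}$ induces the identity on the $C_{*-2n+2}(X,q^{-1}(*))$ factor and the degree-$a$ map on $C_{2n-1}(S^{2n-1})$. Everything else is formal, which is why the corollary follows immediately once Lemma \ref{Kunneth} is in hand.
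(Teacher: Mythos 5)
Your proof is correct and is essentially the argument the paper intends: the paper declares the corollary ``immediate from Lemma \ref{Kunneth}'', and your factorization through $1\vee\underline{a}$, the naturality of the obstruction class under the cellular self-map $1\times\underline{a}$ of the pair (with the twisted coefficients preserved since $\pi_1$ is untouched), and the identification of $(1\times\underline{a})^*$ with multiplication by $a$ on the $H^{2n-1}(S^{2n-1})$ tensor factor are exactly the details left implicit, matching the template argument sketched in Lemma \ref{mu} for the simply-connected case.
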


	Now we prove:
	
	\begin{pro}
		\label{mu X/G}
		Let $2n-1=d(H^*(X;\Q))$. Then for some positive integer $l$, there is a homotopy commutative diagram
		\[
		\xymatrix{
			X/G\vee S^{2n-1}\ar[r]^(.62){1+l(q\circ\epsilon)}\ar[d]&X/G\ar@{=}[d]\\
			X/G\times S^{2n-1}\ar[r]^(.6){\bar{\mu}}&X/G.
		}
		\]
	\end{pro}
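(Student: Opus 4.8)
The plan is to construct $\bar{\mu}$ by obstruction theory, extending the map $1+l(q\circ\epsilon)\colon X/G\vee S^{2n-1}\to X/G$ over $X/G\times S^{2n-1}$ one skeleton at a time, in direct analogy with the construction of $\mu$ in Lemma \ref{mu}. By Corollary \ref{linearity} the obstruction at each stage satisfies $\mathfrak{o}^{m+1}(l)=l\,\mathfrak{o}^{m+1}(1)$, so the whole problem reduces to showing that each class $\mathfrak{o}^{m+1}(1)$ in \eqref{obstruction} is of finite order. Granting this, since $X/G$ is finite dimensional only finitely many values of $m$ occur, and a single positive integer $l$ divisible by all the relevant orders makes every obstruction vanish, yielding the desired extension.

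To prove finiteness of $\mathfrak{o}^{m+1}(1)$ I would show that the entire obstruction group is finite. By Lemma \ref{Kunneth} this group is isomorphic to $H^{m-2n+2}(X/G,*;\underline{\pi_m(X/G)})\otimes H^{2n-1}(S^{2n-1})$, and since $H^{2n-1}(S^{2n-1})\cong\Z$ is free, it is enough to see that $H^{m-2n+2}(X/G,*;\underline{\pi_m(X/G)})$ is finite for every $m\ge 2n-1$ arising in the induction (note $m-2n+2\ge1$, so the reduced and relative groups agree). Because $G$ is finite and acts freely, the transfer gives $\mathrm{tr}\circ q^*=|G|\cdot\mathrm{id}$, so the map
\[
q^*\colon H^{m-2n+2}(X/G;\underline{\pi_m(X/G)})\to H^{m-2n+2}(X;\pi_m(X))
\]
is rationally injective, where on the right $q^*\underline{\pi_m(X/G)}$ is the constant coefficient system $\pi_m(X)$ since $X$ is simply connected and $q_*\colon\pi_m(X)\to\pi_m(X/G)$ is an isomorphism for $m\ge2$. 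Hence it suffices to check that the target vanishes rationally, i.e. that $H^{m-2n+2}(X;\Q)\otimes(\pi_m(X)\otimes\Q)=0$.

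This last vanishing is exactly where the hypothesis enters, and I would argue it in two cases. If $m=2n-1$, then $m-2n+2=1$, and $H^1(X;\pi_m(X))=0$ by the universal coefficient theorem because $X$ is simply connected; so the target is zero. If $m>2n-1$, then, since $X$ is an $H_0$-space, its rational homotopy $\pi_*(X)\otimes\Q$ is concentrated in the degrees of its type, whose maximum is $d(H^*(\widetilde{X};\Q))=d(H^*(X;\Q))=2n-1$; thus $\pi_m(X)\otimes\Q=0$, the coefficient group $\pi_m(X)$ is finite, and the target is again rationally zero. In either case the obstruction group is finite, so $\mathfrak{o}^{m+1}(1)$ has finite order, completing the reduction.

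The step I expect to be the main obstacle is the transfer argument with twisted coefficients: one must verify that the relative obstruction group of Lemma \ref{Kunneth} maps under $q^*$ to the untwisted group $H^{m-2n+2}(X;\pi_m(X))$ compatibly with the transfer, so that rational injectivity is genuinely available. A secondary point requiring care is the usual bookkeeping of obstruction theory, namely that after scaling by $l$ the vanishing of the successive classes is not destroyed by the indeterminacy coming from choices made at earlier stages; this is controlled because all the relevant groups are finite in the range $m\ge 2n-1$, exactly as in the proof of Lemma \ref{mu}.
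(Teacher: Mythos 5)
Your overall strategy is the same as the paper's: induct over the skeleta, use Corollary \ref{linearity} to reduce everything to showing each class $\mathfrak{o}^{m+1}(1)$ has finite order, and kill the obstructions by scaling, taking the product of the scalars produced at the successive stages. For $m>2n-1$ your conclusion is correct, but the transfer machinery is unnecessary there: since $X$ is an $H_0$-space whose type is bounded by $2n-1$, the group $\pi_m(X/G)\cong\pi_m(X)$ is already finite for $m>2n-1$, so the twisted cohomology of the finite complex $X/G$ with these finite coefficients is finite outright. This is exactly what the paper does in its inductive step.

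The genuine gap is at $m=2n-1$ --- the only stage where the coefficients $\pi_{2n-1}(X/G)$ are infinite, hence the only stage where your transfer argument actually carries the load --- and it is precisely there that your parenthetical reduction fails. For \emph{twisted} coefficients the relative group $H^1(X/G,*;\underline{M})$, $M=\pi_{2n-1}(X/G)$, does not agree with the absolute group $H^1(X/G;\underline{M})$: the exact sequence of the pair $(X/G,*)$ gives
\[
0\to M/M^G\to H^1(X/G,*;\underline{M})\to H^1(X/G;\underline{M})\to 0,
\]
because $H^0(X/G;\underline{M})=M^G\to H^0(*;M)=M$ is the inclusion of the invariants; the cokernel $M/M^G$ vanishes only when the action is trivial (or the coefficients untwisted). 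So your transfer argument, which correctly shows the absolute group is finite (rationally injecting it into $H^1(X;M)=0$), does not yet bound the obstruction group of Lemma \ref{Kunneth}; you still owe finiteness of $M/M^G$, which is false for a general $G$-module $M$. Running the transfer on pairs instead does not rescue this, since $H^1(X,q^{-1}(*);M)\cong M^{|G|-1}$ is rationally nonzero. The fix is to invoke the standing hypothesis that $G$ acts trivially on $H^*(X;\Q)$: by the Cartan--Serre theorem applied to the $H$-space $X_{(0)}$, the rational Hurewicz map embeds $\pi_{2n-1}(X)\otimes\Q$ into $H_{2n-1}(X;\Q)$, and by naturality of Hurewicz the $G$-action on $\pi_{2n-1}(X)\otimes\Q$ is therefore trivial; hence $M/M^G$ is a finitely generated torsion group, so finite, and the obstruction group at $m=2n-1$ is finite after all. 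With that supplement your proof closes. It is worth noting that the paper itself dispatches this first obstruction with only the word ``Clearly,'' so this case genuinely needs an argument of the kind you attempted --- it just needs the corrected one.
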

	
	\begin{proof}
		Since $X_{(0)}$ is a product of rationalized odd spheres of dimensions $\le 2n-1$. In particular, $\pi_*(X/G)$ is a finite abelian group for $*>2n-1$.
		
		Clearly, $i_{2n-1}+q\circ\epsilon\colon(X/G)^{2n-1}\vee S^{2n-1}\to X/G$ extends over $X/G$. Suppose $i_m+l_m(q\circ\epsilon)\colon(X/G)^{m}\vee S^{2n-1}\to X/G$ extends over $X/G$. Then we can define the obstruction class \eqref{obstruction}, where $m>2n-1$. Since $\pi_m(X/G)$ is a finite abelian group, $l_{m+1}\mathfrak{o}^{m+1}(1)=0$ for some positive integer $l_{m+1}$. Then by Lemma \ref{linearity}, $\mathfrak{o}^{m+1}(l_{m+1})=0$, implying 
		$$i_{m+1}+l_{m+1}l_m(q\circ\epsilon)\colon(X/G)^{m+1}\vee S^{2n-1}\to X/G$$
		extends over $X/G$. Since $X/G$ is finite dimensional, the proof is done.
	\end{proof}

	\begin{thm}
		\label{lower bound H_0}
		There is an inequality
		\[
		N\mathcal{E}(X/G)\ge d(X;\Q).
		\]
	\end{thm}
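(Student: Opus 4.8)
The plan is to mimic the self-map $f\colon X\to X$ built above, using the maps $\bar\rho$ and $\bar\mu$ from Proposition \ref{rho X/G} and Proposition \ref{mu X/G} in place of $\rho$ and $\mu$. Set $2n-1=d(H^*(X;\Q))$. Since $X$ is an $H_0$-space, $H^*(X;\Q)$ is an exterior algebra on odd-degree generators, so some generator sits in degree $2n-1$ and in particular $H^{2n-1}(X;\Q)\ne 0$; thus the maps $\epsilon\colon S^{2n-1}\to X$ and $\rho\colon X\to S^{2n-1}$ with $\rho\circ\epsilon$ a $0$-equivalence of positive degree $M$ are available, and Proposition \ref{mu X/G} applies. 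I would then define
\[
\bar f\colon X/G\xrightarrow{\Delta}X/G\times X/G\xrightarrow{1\times\bar\rho}X/G\times S^{2n-1}\xrightarrow{\bar\mu}X/G
\]
and show that $\bar f$ is an isomorphism on $\pi_*(X/G)$ for $*<2n-1$ but not on $\pi_{2n-1}(X/G)$. As $X/G$ is a CW complex, this produces a self-map that is an isomorphism up to degree $2n-2$ yet is not a homotopy equivalence, which is precisely $N\mathcal{E}(X/G)\ge 2n-1=d(H^*(X;\Q))$.

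The computational core is an explicit formula for $\bar f_*$. Writing $j_1\colon X/G\to X/G\times S^{2n-1}$ and $j_2\colon S^{2n-1}\to X/G\times S^{2n-1}$ for the inclusions, Proposition \ref{mu X/G} gives $\bar\mu\circ j_1\simeq\mathrm{id}$ and $\bar\mu\circ j_2\simeq l(q\circ\epsilon)$. Via the splitting $\pi_m(X/G\times S^{2n-1})\cong\pi_m(X/G)\oplus\pi_m(S^{2n-1})$, for $\alpha\in\pi_m(X/G)$ with $m\ge 2$ one has $(1\times\bar\rho)_*\Delta_*\alpha=(j_1)_*\alpha+(j_2)_*(\bar\rho_*\alpha)$, so that
\[
\bar f_*(\alpha)=\alpha+l\,(\bar\rho_*\alpha)\,(q\circ\epsilon).
\]
For $*<2n-1$ the group $\pi_*(S^{2n-1})$ vanishes, hence $\bar\rho_*=0$ and $\bar f_*=\mathrm{id}$; the same holds on $\pi_1$ since $S^{2n-1}$ is $2$-connected and $\bar\mu$ restricts to the identity on $X/G\times *$. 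So $\bar f$ is an isomorphism on $\pi_*(X/G)$ for every $*<2n-1$.

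The crux, and the step I expect to be the main obstacle, is to deduce that $\bar f_*$ is not an isomorphism on $\pi_{2n-1}(X/G)$: the perturbation $l(\bar\rho_*\alpha)(q\circ\epsilon)$ has rank one, so non-invertibility must be extracted with care rather than by naive inspection. My plan is to compose with $\bar\rho_*$ and invoke the square of Proposition \ref{rho X/G}, which gives $\bar\rho\circ q=\underline k\circ\rho$ and therefore $\bar\rho_*(q\circ\epsilon)=\deg(\underline k\circ\rho\circ\epsilon)=kM\ne 0$. Applying $\bar\rho_*$ to the displayed formula yields
\[
\bar\rho_*\circ\bar f_*=(1+lkM)\,\bar\rho_*
\]
as homomorphisms $\pi_{2n-1}(X/G)\to\Z$. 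Since $\bar\rho_*(q\circ\epsilon)=kM\ne 0$, the image of $\bar\rho_*$ is a nonzero subgroup $d\Z\subseteq\Z$, while the image of $\bar\rho_*\circ\bar f_*$ is $(1+lkM)\,d\Z$, a proper subgroup because $1+lkM>1$. Were $\bar f_*$ surjective these images would agree, a contradiction; hence $\bar f_*$ is not surjective and so not an isomorphism. (Concretely, $q\circ\epsilon$ has infinite order, as $q_*$ is an isomorphism in degrees $\ge 2$ and $\rho_*\epsilon=M\ne 0$, and the infinite cyclic direction detected by $\bar\rho_*$ is scaled by $1+lkM$.) This yields $N\mathcal{E}(X/G)\ge d(H^*(X;\Q))$.
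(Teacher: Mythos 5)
Your proposal is correct and follows essentially the same route as the paper: the identical self-map $\bar f=\bar\mu\circ(1\times\bar\rho)\circ\Delta_{X/G}$ built from Propositions \ref{rho X/G} and \ref{mu X/G}, the same verification that $\bar f$ is an isomorphism below degree $2n-1$, and the same scaling factor $1+klM$ on $q\circ\epsilon$ in $\pi_{2n-1}$. The only (minor, and in fact welcome) deviation is in the last step: the paper concludes directly from $\bar f_*(q\circ\epsilon)=(1+klM)(q\circ\epsilon)$ and the infinite order of $q\circ\epsilon$, implicitly using that an automorphism of a finitely generated abelian group cannot scale an infinite-order element by an integer $\ge 2$, whereas you compose with $\bar\rho_*$ and compare images in $\Z$, which makes the non-surjectivity of $\bar f_*$ explicit without appealing to that fact.
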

	
	\begin{proof}
		Let $\bar{\rho}$ and $\bar{\mu}$ be as in Propositions \ref{rho X/G} and \ref{mu X/G}, where we set $2n-1=d(H^*(X;\Q))$. We define a map $\bar{f}\colon X/G\to X/G$ by the composition
		\[
		X/G\xrightarrow{\Delta_{X/G}}X/G\times X/G\xrightarrow{1\times\bar{\rho}}X/G\times S^{2n-1}\xrightarrow{\bar{\mu}}X/G
		\]
		where $\Delta_Y\colon Y\to Y\times Y$ is the diagonal map. Since $\bar{\mu}\vert_{X/G}\times*=1_{X/G}$, $\bar{f}$ is an isomorphism in $\pi_*$ for $*<2n-1$. On the other hand,
		\begin{align*}
			\bar{f}_*(q\circ\epsilon)&=\bar{\mu}\circ(1\times\bar{\rho})\circ\Delta_{X/G}\circ q\circ\epsilon\\
			&=\bar{\mu}\circ(1\times\bar{\rho})\circ((q\circ\epsilon)\times(q\circ\epsilon))\circ\Delta_{S^{2n-1}}\\
			&=\bar{\mu}\vert_{X/G\vee S^{2n-1}}\circ(1\vee\bar{\rho})\circ((q\circ\epsilon)\vee(q\circ\epsilon))\circ\psi\\
			&=(1+l(q\circ\epsilon))\circ((q\circ\epsilon)\vee(\underline{k}\circ\rho\circ\epsilon))\circ\psi\\
			&=((q\circ\epsilon)+lkM(q\circ\epsilon))\circ\psi\\
			&=(1+klM)(q\circ\epsilon)
		\end{align*}
		where $\psi\colon S^{2n-1}\to S^{2n-1}\vee S^{2n-1}$ is the pinch map. Thus since $l,k,M$ are positive integers and $q\circ\epsilon\in\pi_{2n-1}(X/G)$ is of infinite order, $\bar{f}$ is not an isomorphism in $\pi_{2n-1}$.
	\end{proof}

	\begin{proof}
		[Proof of Theorem \ref{main} for ${H}_0$-spaces]
		Combine Lemma \ref{cohomology generator} and Theorem \ref{lower bound H_0}.
	\end{proof}
	\section{Proof of Theorem \ref{main co-H} for co-${H}_0$-spaces}
	
	We set similar notations and conditions for space $X$ and group $G$ in this section. Let $G$ be a finite group, and let $X$ be a simply-connected finite $G$-complex. Let $q\colon X\to X/G$ denote the projection. We consider a condition on the action of $G$ on $X$. We also assume that the action of $G$ on $X$ satisfies either of the equivalent conditions in Lemma \ref{action}. 
	
	Recall that the definition of the cohomological dimension of a path-connected space $Y$ is 
	\[cd(Y)=\sup\{d\mid H^d(Y;M)\ne 0\text{ for some }\Z\pi_1(Y)\text{-module }Y\}\]
	\begin{lem}
		\label{cd}
		Suppose $cd(X/G)=d(H^*(X/G;\Q))=n$ holds for $X$ and $G$ above, then 
		$$cd(X)=d(H^*(X;\Z))=n$$
	\end{lem}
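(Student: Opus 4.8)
The plan is to establish the two equalities $cd(X)=n$ and $d(H^*(X;\Z))=n$ separately, deriving the second from the first once the top nonvanishing cohomological degree of $X$ is pinned down. Throughout I use that $q\colon X\to X/G$ is a regular covering with $X$ simply-connected and finite deck group $G$, so that $X$ is the universal cover of $X/G$ and $\pi_1(X/G)\cong G$, and that by Lemma \ref{action} the ring map $q^*\colon H^*(X/G;\Q)\to H^*(X;\Q)$ is an isomorphism.

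First I would obtain the lower bound $cd(X)\ge n$. Since $q^*$ is a ring isomorphism it preserves indecomposables, so $d(H^*(X;\Q))=d(H^*(X/G;\Q))=n$; in particular the presence of an algebra generator in degree $n$ forces $H^n(X;\Q)\ne 0$. As $\Q$ is a $\Z$-module and $X$ is simply-connected (so that $cd(X)$ ranges over ordinary abelian coefficients), this already gives $cd(X)\ge n$. The same identity $d(H^*(X;\Q))=n$ yields $d(H^*(X;\Z))\ge n$, because $H^*(X;\Q)=H^*(X;\Z)\otimes\Q$ as graded rings and a ring-generating set in degrees $\le d$ tensors to one in degrees $\le d$.

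For the upper bound $cd(X)\le n$, the key point is to feed ordinary coefficient systems on $X$ back into the base $X/G$. For any abelian group $A$ the cochain complex computing $H^*(X;A)$ is $\mathrm{Hom}_{\Z}(C_*(X),A)$, and since $G$ acts freely the cellular chain complex $C_*(X)$ consists of free $\Z G$-modules, so the restriction--coinduction adjunction for the finite group $G$ gives a natural isomorphism
\[
\mathrm{Hom}_{\Z}(C_*(X),A)\cong\mathrm{Hom}_{\Z G}\!\big(C_*(X),\mathrm{Hom}_{\Z}(\Z G,A)\big).
\]
The right-hand side computes the cohomology of $X/G$ with the $\Z G$-module (local system) $\mathrm{Hom}_{\Z}(\Z G,A)$, so Shapiro's lemma yields $H^d(X;A)\cong H^d(X/G;\mathrm{Hom}_{\Z}(\Z G,A))$ for all $d$. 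The hypothesis $cd(X/G)=n$ forces the right-hand side to vanish for $d>n$, hence $H^d(X;A)=0$ for every $A$ and every $d>n$, i.e. $cd(X)\le n$. Combining the two bounds gives $cd(X)=n$.

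Finally I would deduce $d(H^*(X;\Z))=n$: knowing $cd(X)=n$ means $H^d(X;\Z)=0$ for all $d>n$, so every ring generator of $H^*(X;\Z)$ sits in degree $\le n$ and thus $d(H^*(X;\Z))\le n$, which together with the lower bound $d(H^*(X;\Z))\ge n$ closes the argument. I expect the only genuinely non-formal step to be the coinduced-coefficient identification $H^*(X;A)\cong H^*(X/G;\mathrm{Hom}_{\Z}(\Z G,A))$, as this is where the covering structure and finiteness of $G$ enter; the remainder is bookkeeping relating algebra generators to cohomological dimension. It is worth noting that the co-$H_0$-hypothesis plays no role in this lemma, which holds for any simply-connected finite $G$-complex satisfying Lemma \ref{action}.
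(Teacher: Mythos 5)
Your proof is correct and takes essentially the same route as the paper: both arguments squeeze $cd(X)$ and $d(H^*(X;\Z))$ between $cd(X/G)$ (via Shapiro's lemma for the universal covering --- your coinduced module $\mathrm{Hom}_{\Z}(\Z G,A)$ coincides with the paper's group-ring coefficients $A\pi_1(X/G)$ since $G$ is finite) and $d(H^*(X;\Q))=d(H^*(X/G;\Q))$ coming from Lemma \ref{action}. The only differences are organizational: you establish the upper and lower bounds separately and spell out the chain-level adjunction behind Shapiro's lemma, whereas the paper writes one chain of inequalities $cd(X/G)\ge cd(X)\ge d(H^*(X;\Z))\ge d(H^*(X;\Q))=d(H^*(X/G;\Q))$ and lets the hypothesis force equality throughout.
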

	\begin{proof}
		For any path-connected space $Y$, notice that let $M=A\pi_1(Y)$ for an abelian group $A$, then $H^d(Y;M)\cong H^d(\widetilde{Y};A)$ where $\widetilde{Y}$ is the universal covering space of $Y$. Hence we have $cd(Y)\ge cd(\widetilde{Y})$.

		By the canonical inequality $cd(Y)\ge d(H^*(Y;\Z))\ge d(H^*(Y;\Q))$, let $Y=X/G$ and we have 
		\[cd(X/G)\ge cd(X)\ge d(H^*(X;\Z))\ge d(H^*(X;\Q))=d(H^*(X/G;\Q))\] 
		where the last equal sign is by Lemma \ref{action}. 
		So since $cd(X/G)=d(H^*(X/G;\Q))$, all "$\ge$" turn to "$=$" and we obtain that 
		\begin{equation}
			\label{coH equation}
			cd(X/G)=cd(X)=d(H^*(X;\Z))=d(H^*(X;\Q))=n
		\end{equation}
	\end{proof}
	In the rest of this section, we assume that $X$ is an co-$H_0$-space. By Proposition \ref{degree}, there are maps $\epsilon\colon S^{n}\to X$ and $\rho\colon X\to S^{n}$ such that $\rho\circ\epsilon\colon S^{n}\to S^{n}$ is a 0-equivalence. By composing with a self-map of $S^{n}$ of degree -1 if necessary, we may assume $\rho\circ\epsilon\colon S^{n}\to S^{n}$ is of positive degree $M$.

	\begin{lem}
		\label{phi}
		There is a homotopy commutative diagram
		\[
		\xymatrix{
			X\ar[r]^(.35){\phi}\ar@{=}[d]&X\vee S^n\ar[d]\\
			X\ar[r]^(.4){1\times\rho}&X\times S^n
		}
		\]
	\end{lem}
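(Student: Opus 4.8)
The plan is to realize the right-hand vertical arrow $j\colon X\vee S^n\hookrightarrow X\times S^n$ as a fibration and to produce $\phi$ by lifting the graph map $(1\times\rho)\circ\Delta\colon X\to X\times S^n$ (the bottom arrow of the square) through $j$ by obstruction theory. Writing $F$ for the homotopy fiber of $j$, a lift $\phi$ with $j\circ\phi\simeq(1\times\rho)\circ\Delta$ exists as soon as the successive lifting obstructions, which lie in $H^{m+1}(X;\pi_m(F))$, all vanish; since $X$ is simply-connected the coefficients are untwisted and there are no $\pi_1$-complications.

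The crux is a connectivity estimate for $F$. I would extract it from the pair $(X\times S^n,X\vee S^n)$: by the relative Hurewicz theorem (equivalently, by Blakers--Massey, or directly via Ganea's identification of the fiber with the join $\Omega X * \Omega S^n$), the first nonvanishing relative homotopy group of this pair sits in degree $\mathrm{conn}(X)+\mathrm{conn}(S^n)+2=1+(n-1)+2=n+2$. Since $\pi_m(F)\cong\pi_{m+1}(X\times S^n,X\vee S^n)$, this shows $F$ is $n$-connected, that is, $\pi_m(F)=0$ for every $m\le n$.

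With this in hand the obstruction groups vanish for a purely numerical reason, in two overlapping ranges. For $m\le n-1$ the coefficient group $\pi_m(F)$ is zero by the connectivity of $F$, so the obstruction cochain is identically zero. For $m\ge n$ I instead invoke $cd(X)=n$, supplied by Lemma \ref{cd} with $n=d(H^*(X;\Q))$: then $m+1>n=cd(X)$, so $H^{m+1}(X;\pi_m(F))=0$ directly from the definition of cohomological dimension, and the obstruction class vanishes, allowing the partial lift to be corrected on the previous skeleton and extended. Thus at every stage the obstruction to extending over the next skeleton dies, and by induction $(1\times\rho)\circ\Delta$ lifts to $\phi\colon X\to X\vee S^n$, which is exactly the asserted homotopy-commutative square.

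I expect the only delicate point to be the connectivity of $F$: one must check that $F$ is genuinely $n$-connected, since it is precisely the borderline degree $m=n$, where both arguments (vanishing coefficients for $m\le n-1$ and $cd(X)=n$ for $m\ge n$) must meet, that makes the two ranges cover all $m$. It is worth noting that, in contrast to the $H_0$-case of Lemma \ref{mu}, no auxiliary integer multiple is needed here; the co-$H_0$ hypothesis enters only through Proposition \ref{degree}, which produces the map $\rho\colon X\to S^n$ in the first place, while the factorization itself is forced by the connectivity-versus-dimension bookkeeping.
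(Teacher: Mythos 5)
Your proposal is correct and takes essentially the same route as the paper: both produce $\phi$ by lifting the graph map $X\to X\times S^n$ through the inclusion $X\vee S^n\to X\times S^n$ via obstruction theory (Moore--Postnikov tower), killing the obstruction groups $H^{m+1}(X;\pi_m(F))$ by the connectivity of the fiber for $m\le n-1$ and by $cd(X)=n$ for $m\ge n$. The only (immaterial) difference is that you obtain the fiber's connectivity from Blakers--Massey/Ganea, whereas the paper deduces it from the inclusion being a homological $n$-equivalence together with the Whitehead theorem; also note that $(n-1)$-connectivity already suffices, since the borderline case $m=n$ is covered by the cohomological-dimension argument.
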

	
	\begin{proof}
		Let $i:X\vee S^n\to X\times S^n$ be the inclusion and $F_i$ be its homotopy fiber. Since $X\vee S^n$ is simply-connected, the map has a Moore-Postnikov decomposition 
		\begin{equation}
			\xymatrix{
				&Z^{(m+1)}\ar[d]^{p_{m+1}}\ar[dr]& \\
				X\vee S^n\ar[r]\ar[ur]&Z^{(m)}\ar[r]&X\times S^n 
			}
		\end{equation}
		where the vertical map is a principal fibration induced by $k^{m+1}:Z^{(m)}\to K(\pi_m(F_i),m+1)$.

		Suppose there is a map $(1\times\rho)_m\colon X\to Z^{(m)}$ lifting $1\times\rho:X\to X\times S^n$ for a positive integer $N$, and consider the obstruction class
		\[
		\mathfrak{o}\in H^{m+1}(X;\pi_m(F_i))
		\]
		for lifting $(1\times\rho)_m\colon X\to Z^{(m)}$ to $Z^{(m+1)}$ through $p_{m+1}$. Since $i$ is a $n$-homological equivalence and both $X\vee S^n$, $X\times S^n$ are simply-connected, $i$ is a $n$-equivalence by Whitehead theorem, hence $F_i$ is $(n-1)$-connected. Since $cd(X)=n$, we conclude that 
		\[H^{m+1}(X;\pi_m(F_i))=0\]
		for any integer $m\ge0$, completing the proof. 
	\end{proof}

	Define a map $g\colon X\to X$ by the composition
	\[
	X\xrightarrow{\phi}X\vee S^n\xrightarrow{1+\epsilon}X\vee X\xrightarrow{\nabla}X
	\]
	where $\nabla$ is the folding map and $\phi$ is as in Lemma \ref{phi}. Clearly, $q$ is an isomorphism in $\pi_*$ for $*<n$. On the other hand, since $\rho\circ\epsilon\colon S^{n}\to S^{n}$ is of positive degree, we have $g_*(\epsilon)=(MN+1)\epsilon\in\pi_{n}(X)$. Then $f$ is not an isomorphism in $\pi_{n}$ because $\epsilon\in\pi_{n}(X)$ is of infinite order. Thus
	\[
	N\mathcal{E}(X)\ge n
	\]
	We will get a lower bound for $N\mathcal{E}(X/G)$ basically the same argument. Then we need to construct maps
	\[
	\bar{\rho}\colon X/G\to S^{n}\quad\text{and}\quad\bar{\phi}\colon X/G\to X/G\vee S^n
	\]
	having properties analogous to $\rho\colon X\to S^{n}$ and $\phi\colon X\to X\vee S^n$.

	First, we construct a map $\bar{\rho}\colon X/G\to S^{n}$. Since $n$ may be even number here, we cannot apply Lemma \ref{Hilton-Milnor}. However, since the cohomological dimension of $X$ is given, we will still be able to prove the existence of $\bar{\rho}$. 
	
	\begin{pro}
		\label{rho X/G2}
		For some positive integer $k$, there is a homotopy commutative diagram
		\[
		\xymatrix{
			X\ar[r]^{\rho}\ar[d]_q&S^{n}\ar[d]^{\underline{k}}\\
			X/G\ar[r]^{\bar{\rho}}&S^{n}
		}
		\]
	\end{pro}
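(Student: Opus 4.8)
The plan is to construct $\bar{\rho}$ by obstruction-theoretic induction on the skeleta of the mapping cylinder $M_q$ of $q\colon X\to X/G$, exactly mirroring the proof of Proposition \ref{rho X/G}, but substituting the cohomological-dimension bound for the Hilton--Milnor argument (Lemma \ref{Hilton-Milnor}), which is unavailable here because $n$ may be even. Recall that the whole difficulty of the odd case was that $\underline{k}\circ\alpha$ need not equal $k\alpha$ for $\alpha\in\pi_i(S^{2n-1})$; Lemma \ref{Hilton-Milnor} resolved this by killing such $\alpha$ after composing with a degree-$k$ map. In the present co-$H_0$ setting the vanishing comes instead from the hypothesis $cd(X/G)=cd(X)=n$ recorded in Lemma \ref{cd}.

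\noindent
First I would set up the induction as in Proposition \ref{rho X/G}: I work with the CW pair $(M_q,X)$, and the induction begins on $X\cup M_q^{\,n}$, where $\underline{k}\circ\rho$ clearly extends (since $\rho$ is already defined on $X$ and $S^n$ is $(n-1)$-connected there is no obstruction below dimension $n$). Suppose I have an extension $(\underline{k_m}\circ\rho)_m\colon X\cup M_q^{\,m}\to S^n$ of $\underline{k_m}\circ\rho$ for some $m\ge n$. The obstruction to extending over $X\cup M_q^{\,m+1}$ lives in
\[
\mathfrak{o}^{m+1}\in H^{m+1}(C_q;\pi_m(S^n)),
\]
where $C_q$ is the mapping cone of $q$. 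The point is that $C_q$ fits into the cofiber sequence $X\to X/G\to C_q$, and by the hypothesis $cd(X)=cd(X/G)=n$ together with the long exact sequence in cohomology with the twisted coefficients $\pi_m(S^n)$, the group $H^{m+1}(C_q;\pi_m(S^n))$ vanishes for every $m+1>n$, i.e.\ for every $m\ge n$. Hence the obstruction is automatically zero and the extension proceeds without needing to replace $\rho$ by a higher-degree multiple at all — indeed I expect one can take all $k_m=1$, so that the resulting degree $k$ is forced only by the single bottom-dimensional obstruction $\mathfrak{o}^{n}$ (if any) coming from $H^{n}(C_q;\pi_{n-1}(S^n))$, which is handled by the same rational-vanishing and naturality argument as in Proposition \ref{rho X/G} ($\mathfrak{o}^{n}(a)=a\mathfrak{o}^{n}(1)$, with $\mathfrak{o}^{n}(1)$ of finite order). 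Since $M_q$ is finite dimensional, the induction terminates and produces $\bar{\rho}$.

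\noindent
\textbf{The main obstacle} I anticipate is justifying the vanishing of $H^{m+1}(C_q;\pi_m(S^n))$ cleanly from $cd(C_q)$ or $cd(X/G)$, because $cd$ is defined via \emph{arbitrary} $\Z\pi_1$-modules $M$, whereas the coefficient group $\pi_m(S^n)$ here is a specific (finite, for $m>n$) abelian group carrying a possibly nontrivial $\pi_1(X/G)$-action. One must confirm that $cd(X/G)=n$ genuinely forces $H^{>n}(C_q;-)=0$ for these coefficient systems: this requires relating $cd(C_q)$ to $cd(X/G)$ through the cofiber sequence, and checking that the top-dimensional cohomology of $C_q$ with twisted coefficients behaves as the definition of $cd$ predicts. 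The cleanest route is probably to argue directly that $C_q$ has the cohomological dimension bound $n$ (using $cd(X)=cd(X/G)=n$ from Lemma \ref{cd} and the long exact cohomology sequence of the cofibration), after which the vanishing is immediate. Once this dimension bookkeeping is pinned down, the remainder of the proof is the routine skeletal induction with the naturality relation $\mathfrak{o}^{m+1}(a)=a\,\mathfrak{o}^{m+1}(1)$ inherited from the self-maps of $S^n$, completing the construction of $\bar{\rho}$.
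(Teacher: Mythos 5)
Your overall architecture is the same as the paper's (skeletal induction on the pair $(M_q,X)$, obstructions in $H^{m+1}(C_q;\pi_m(S^n))$, a cohomological-dimension bound replacing Lemma \ref{Hilton-Milnor}), but there is an off-by-one error in the dimension bookkeeping that breaks the argument. The long exact sequence of the cofibration $X\to X/G\to C_q$ together with $cd(X)=cd(X/G)=n$ gives only $cd(C_q)\le n+1$, not $\le n$: at $d=n+1$ the sequence reads
\[
H^n(X;M)\to H^{n+1}(C_q;M)\to H^{n+1}(X/G;M)=0,
\]
so $H^{n+1}(C_q;M)$ is a quotient of $H^n(X;M)$, which is nonzero in general (indeed $d(H^*(X;\Z))=n$ by Lemma \ref{cd}). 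Hence your claim that $H^{m+1}(C_q;\pi_m(S^n))$ vanishes for \emph{every} $m\ge n$ fails precisely at $m=n$, where the coefficients are $\pi_n(S^n)\cong\Z$. Concretely, $H^{n+1}(C_q;\Z)$ contains the cokernel of $q^*\colon H^n(X/G;\Z)\to H^n(X;\Z)$, which Lemma \ref{action} makes finite but by no means trivial. Your fallback — that the degree $k$ is forced only by an obstruction in $H^n(C_q;\pi_{n-1}(S^n))$ — does not rescue this, because $\pi_{n-1}(S^n)=0$, so that group is zero. Under your claims every obstruction would vanish and one could take $k=1$, i.e.\ $\rho$ itself would factor through $q$ up to homotopy; this is too strong and false in general, which is exactly why the statement includes the map $\underline{k}$.

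The paper's proof differs from yours only at this one step, and in the way your hedged remarks anticipate: the critical obstruction is $\mathfrak{o}(1)\in H^{n+1}(C_q;\pi_n(S^n))$, which is shown to be of \emph{finite order} (not zero) because $H^{n+1}(C_q;\pi_n(S^n))\otimes\Q\cong H^{n+1}(C_q;\Q)=0$ by Lemma \ref{action}; then naturality with respect to self-maps of $S^n$ gives $\mathfrak{o}(k)=k\mathfrak{o}(1)$, so some positive $k$ kills it — this is where $\underline{k}$ enters. Only for $m>n$ does the bound $cd(C_q)\le n+1$ make the obstruction groups $H^{m+1}(C_q;\pi_m(S^n))$ vanish outright, so the extension over the remaining skeleta is free. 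To repair your write-up, move the rational-vanishing-plus-naturality mechanism from dimension $n$ (where it is vacuous) to dimension $n+1$ (where it is needed), and weaken your claimed vanishing range from $m\ge n$ to $m>n$.
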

	
	\begin{proof}
		We induct on the skeleta of CW pairs $(M_q,X)$, where $M_q$ is the mapping cylinder of $q$ and has the homotopy type of $X/G$. Clearly, the induction begins with the $X\cup M_q^n$. Suppose there is an extension $\rho_n:X\cup M_q^n\to S^{n}$ of $\rho$. Consider the obstruction class
		\[
		\mathfrak{o}(k)\in H^{n+1}(C_q;\pi_{n}(S^{n}))
		\]
		for extending $\underline{k}\circ\rho_n\colon X\cup M_q^n\to S^{n}$ over $X\cup M_q^{n+1}$, where $C_q$ denotes the mapping cone of $q\colon X\to X/G$. By the cohomology long exact sequence, $cd(X/G)=cd(X)=n$ implies $cd(C_q)\le n+1$.

		Since $\pi_{n}(S^{n})\cong\Z$,
		\[
		H^{n+1}(C_q;\pi_{n}(S^{n}))\otimes\Q\cong H^{n+1}(C_q;\Q)
		\]
		and by the assumption on the action of $G$ on $X$, $H^{n+1}(C_q;\Q)=0$. Then the obstruction class $\mathfrak{o}(1)$ is of finite order. On the other hand, by naturality of the obstruction class, we have
		\[
		\mathfrak{o}(k)=k\mathfrak{o}(1).
		\]
		Thus $\mathfrak{o}(k)=0$ for some positive integer $k$, implying that there is a map $(\underline{k}\circ\rho)_{n+1}:X\cup M_q^{n+1}\to S^{n}$ extending $\underline{k}\circ\rho$.

		Since $H^{m+1}(C_q;\pi_m(S^n))=0$ for any $m>n$, there is no obstruction for extending $(\underline{k}\circ\rho)_{n+1}:X\cup M_q^{n+1}\to S^{n}$ over $M_q\simeq X/G$ and the proof is complete. 
	\end{proof}

	Next, we construct a map $\bar{\phi}\colon X/G\to X/G\vee S^n$ by lifting the map $1\times \bar{\rho}\colon X/G\to X/G\times S^n$ through the inclusion $i':X/G\vee S^n\to X/G\times S^n$. To this end, we will perform obstruction theory, so we show a property of the obstruction class. Similar to Lemma \ref{phi}, we consider the Moore-Postnikov decomposition of $i':X/G\vee S^n\to X/G\times S^n$ 
	\begin{equation}
		\xymatrix{
			&W^{(m+1)}\ar[d]^{p'_{m+1}}\ar[dr]& \\
			X/G\vee S^n\ar[r]\ar[ur]&W^{(m)}\ar[r]&X/G\times S^n 
		}
	\end{equation}
	Suppose that there is a map $(1\times\bar{\rho})_m:X/G\to W^{(m)}$ lifting $1\times\bar{\rho}\colon X/G\to X/G\times S^n$, then we can define the obstruction class
	\begin{equation}
		\label{coobstruction}
		\mathfrak{o}\in H^{m+1}(X/G;\underline{\pi_m(F_{i'})})
	\end{equation}
	for lifting $(1\times\bar{\rho})_m:X/G\to W^{(m)}$ through $p'_{m+1}$, where $F_{i'}$ to the homotopy fiber of $i':X/G\vee S^n\to X/G\times S^n$ and $\underline{\pi_m(F_{i'})}$ denotes the twisted coefficients associated to the action of $\pi_1(X/G\vee S^n)\cong G$ on $\pi_m(F_{i'})\cong\pi_{m+1}(X/G\times S^n,X/G\vee S^n)$.

	In order to analyze the obstruction class \eqref{coobstruction}, we need to compute the connectivity of $F_{i'}$.  
	
	\begin{lem}
		\label{ss}
		The homotopy fiber $F_{i'}$ is $(n-1)$-connected. 
	\end{lem}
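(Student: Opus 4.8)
The plan is to reduce the connectivity of $F_{i'}$ to a homology calculation on simply-connected spaces by passing to universal covers, in the same spirit as Lemma \ref{Kunneth}. First I would observe that, since $n\ge 2$, van Kampen's theorem gives $\pi_1(X/G\vee S^n)\cong\pi_1(X/G\times S^n)\cong G$ and that $i'$ induces the identity on $\pi_1$. Consequently the square obtained by pulling back the universal covering $X\times S^n\to X/G\times S^n$ along $i'$ is a homotopy pullback whose left-hand vertical map is the universal covering of $X/G\vee S^n$. Since the two horizontal homotopy fibres of a homotopy pullback coincide, $F_{i'}$ is homotopy equivalent to the homotopy fibre of the induced map $\widetilde{i'}\colon\widetilde W\to X\times S^n$ between universal covers, where $\widetilde W=\widetilde{X/G\vee S^n}$ is the simply-connected space described in Lemma \ref{Kunneth}.

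Next I would identify the two covers explicitly. Because $S^n$ is simply-connected, $X\times S^n$ is the universal cover of $X/G\times S^n$, and $\widetilde W$ is obtained from $X$ by wedging on a copy of $S^n$ at each of the $|G|$ points of $q^{-1}(*)$; as $X$ is path-connected this yields a homotopy equivalence $\widetilde W\simeq X\vee\bigvee_{g\in G}S^n$. Both spaces are now simply-connected, so by the relative Hurewicz theorem the connectivity of $F_{i'}$ is governed by $\widetilde{i'}_*$ in homology: it suffices to show that $\widetilde{i'}_*\colon H_*(\widetilde W)\to H_*(X\times S^n)$ is an isomorphism for $*<n$ and an epimorphism for $*=n$, which forces $H_*(X\times S^n,\widetilde W)$, hence $\pi_*(X\times S^n,\widetilde W)$, to vanish for $*\le n$, and therefore $F_{i'}$ to be $(n-1)$-connected.

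The homology comparison is then routine. We have $H_j(\widetilde W)=H_j(X)$ for $j<n$ and $H_n(\widetilde W)=H_n(X)\oplus\Z^{|G|}$, while the Künneth theorem gives $H_j(X\times S^n)=H_j(X)\oplus H_{j-n}(X)$. For $j<n$ the summand $H_{j-n}(X)$ vanishes and $\widetilde{i'}_*$ is the identity on $H_j(X)$, hence an isomorphism. In degree $n$ the map $\widetilde{i'}$ sends the sphere wedged at $p\in q^{-1}(*)$ to $\{p\}\times S^n$, whose class is the generator of the $H_0(X)\otimes H_n(S^n)$ summand, independent of the chosen point; thus the restriction $\Z^{|G|}\to\Z$ is the sum map and $\widetilde{i'}_*$ is surjective in degree $n$, exactly the required range.

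The main obstacle I anticipate is not a single computation but the bookkeeping that makes the reduction legitimate: checking that the relevant square really is a homotopy pullback (so the horizontal homotopy fibres agree), and pinning down precisely how the $|G|$ lifted spheres of $\widetilde W$ sit inside $X\times S^n$, since the $(n-1)$-connectivity hinges on those classes collapsing onto the single fundamental class of the $S^n$-factor rather than producing new homology below degree $n+1$. An alternative that sidesteps the homology bookkeeping is to invoke Ganea's fibration, identifying $F_{i'}\simeq\Omega(X/G)*\Omega S^n$ and reading off the connectivity from that of the two loop spaces ($\Omega(X/G)$ nonempty and $\Omega S^n$ being $(n-2)$-connected); I would keep this in reserve as a cross-check.
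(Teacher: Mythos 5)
Your proof is correct, but it takes a genuinely different---and considerably longer---route than the paper's. The paper disposes of Lemma \ref{ss} in one sentence by cellular approximation: with the product CW structure, every cell of $X/G\times S^n$ not lying in $X/G\vee S^n$ is a product $e^i\times e^n$ with $i\ge 1$, hence of dimension $\ge n+1$, so any map of pairs $(D^r,S^{r-1})\to(X/G\times S^n,X/G\vee S^n)$ with $r\le n$ compresses into the subcomplex, giving $\pi_r(X/G\times S^n,X/G\vee S^n)=0$ for $r\le n$ directly; no fundamental-group, covering-space, or homology input is needed, and in particular no simple connectivity anywhere. Your argument instead passes to universal covers (legitimate, since $i'$ is a $\pi_1$-isomorphism by van Kampen, using $n\ge2$, and the strict pullback of a covering is a homotopy pullback), identifies $\widetilde{W}\simeq X\vee\bigvee_{g\in G}S^n$ exactly as in the proof of Lemma \ref{Kunneth}, and then runs relative Hurewicz plus a K\"unneth computation; all steps check out, including the key point that the degree-$n$ comparison $\Z^{|G|}\to\Z$ is the sum map, so it is surjective, which is all the Hurewicz induction requires. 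What the paper's skeletal argument buys is brevity and complete independence from $\pi_1$; what your route buys is an explicit picture of the cover consistent with Lemma \ref{Kunneth}, at the price of the pullback and Hurewicz bookkeeping you yourself flag as the main obstacle. Your Ganea cross-check is also valid as a primary proof and is arguably the cleanest of the three: Ganea's theorem holds for path-connected well-pointed spaces without simple connectivity, and the join of a nonempty space with an $(n-2)$-connected space is $(n-1)$-connected, which gives the statement immediately.
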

	\begin{proof}
		Notice that $X/G\vee S^n$ is the $n$-skeleton of $X/G\times S^n$. By cellular approximation, a map of pairs $(D^r,S^{r-1})\to (X/G\times S^n,X/G\vee S^n)$ is nullhomotopic for all $r\le n$, hence $\pi_r(X/G\times S^n,X/G\vee S^n)=0$ for $r\le n$, complete the proof. 
	\end{proof}
	
	Now we prove:
	
	\begin{pro}
		\label{phi X/G}
		There is a homotopy commutative diagram
		\[
		\xymatrix{
			X/G\ar[r]^(.35){\bar{\phi}}\ar@{=}[d]&X/G\vee S^n\ar[d]\\
			X/G\ar[r]^(.35){1\times\bar{\rho}}&X/G\times S^n
		}
		\]
	\end{pro}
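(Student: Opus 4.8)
The plan is to run exactly the obstruction-theoretic lifting argument of Lemma \ref{phi}, but now with the twisted coefficient systems forced on us by $\pi_1(X/G)\cong G$, and with the cohomological dimension of $X/G$ playing the role that $cd(X)$ played there. Concretely, I would lift $1\times\bar\rho\colon X/G\to X/G\times S^n$ stage by stage through the Moore-Postnikov tower $\{W^{(m)}\}$ of $i'$ displayed above. Suppose a lift $(1\times\bar\rho)_m\colon X/G\to W^{(m)}$ has been constructed. The single obstruction to lifting it through the principal fibration $p'_{m+1}$ is the class $\mathfrak{o}\in H^{m+1}(X/G;\underline{\pi_m(F_{i'})})$ of \eqref{coobstruction}, and the lift exists if and only if this class vanishes. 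So the whole proof reduces to showing that $\mathfrak{o}=0$ for every $m\ge 0$.

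To show $\mathfrak{o}=0$ I would split into two ranges of $m$. For $m<n$, Lemma \ref{ss} gives $\pi_m(F_{i'})=0$, so the coefficient system vanishes and $\mathfrak{o}=0$ trivially; this lets the lift climb the tower freely through these stages. For $m\ge n$ we have $m+1\ge n+1>n=cd(X/G)$, the equality $cd(X/G)=n$ being part of the standing hypothesis recorded in Lemma \ref{cd}. By the very definition of cohomological dimension, $H^{d}(X/G;M)=0$ for every $d>cd(X/G)$ and every $\Z\pi_1(X/G)$-module $M$; taking $d=m+1$ and $M=\pi_m(F_{i'})$ yields $\mathfrak{o}=0$. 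Hence every obstruction vanishes, and since $X/G$ is finite-dimensional the induction terminates after finitely many steps and produces the desired lift $\bar\phi\colon X/G\to X/G\vee S^n$ with $i'\circ\bar\phi\simeq 1\times\bar\rho$, which is exactly the asserted homotopy commutative square.

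I expect the only genuinely new point over Lemma \ref{phi} — and therefore the step to handle with care — to be the passage to twisted coefficients: the homotopy fiber $F_{i'}$ carries the nontrivial $G\cong\pi_1(X/G\vee S^n)$-action described before \eqref{coobstruction}, so the obstruction class a priori lives in twisted cohomology rather than ordinary cohomology. The point that makes the argument go through is that the invariant $cd$ was defined using arbitrary $\Z\pi_1$-modules, so it bounds precisely the twisted cohomology groups in which $\mathfrak{o}$ sits, and no separate computation of these groups is needed beyond the vanishing forced by dimension. The remaining ingredient, the $(n-1)$-connectivity of $F_{i'}$, is already in hand from Lemma \ref{ss}, so apart from verifying the naturality and well-definedness of the Moore-Postnikov obstruction with local coefficients (which is standard, cf. the references used for \eqref{obstruction}), the proof is a routine assembly of these facts.
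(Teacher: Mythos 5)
Your proposal is correct and is essentially identical to the paper's argument: lift $1\times\bar\rho$ stage by stage through the Moore--Postnikov tower of $i'$, using Lemma \ref{ss} to kill the coefficient groups $\underline{\pi_m(F_{i'})}$ for $m<n$ and the standing hypothesis $cd(X/G)=n$ to kill $H^{m+1}(X/G;\underline{\pi_m(F_{i'})})$ for $m\ge n$, with finite-dimensionality of $X/G$ ensuring the process terminates. If anything, your explicit two-case split is more careful than the paper's one-sentence proof, which attributes the vanishing for all $m\ge 0$ to $cd(X/G)=n$ alone while implicitly relying on the connectivity statement of Lemma \ref{ss} for the range $m<n$.
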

	
	\begin{proof}
		Since $cd(X/G)=n$, the cohomology group 
		\[H^{m+1}(X/G;\underline{\pi_m(F_{i'})})=0\]
		for any $m\ge0$. Hence the obstruction class vanishes and the proof is complete. 
	\end{proof}

	\begin{thm}
		\label{lower bound coH_0}
		There is an inequality
		\[
		N\mathcal{E}(X/G)\ge d(X;\Q)=n
		\]
	\end{thm}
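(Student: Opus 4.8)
The plan is to construct a self-map $\bar{g}\colon X/G\to X/G$ dual to the map $\bar{f}$ used in Theorem~\ref{lower bound H_0}, to show that it induces an isomorphism on $\pi_*$ for $*<n$, and to prove that it nevertheless fails to be an isomorphism on $\pi_n$; this forces $N\mathcal{E}(X/G)\ge n=d(H^*(X;\Q))$. Concretely, using the maps $\bar{\rho}\colon X/G\to S^n$ and $\bar{\phi}\colon X/G\to X/G\vee S^n$ from Propositions~\ref{rho X/G2} and~\ref{phi X/G}, I define $\bar{g}$ as the composition $X/G\xrightarrow{\bar{\phi}}X/G\vee S^n\xrightarrow{1+(q\circ\epsilon)}X/G\vee X/G\xrightarrow{\nabla}X/G$, exactly mirroring the construction of $g$ in the simply-connected case with $\epsilon$ replaced by $q\circ\epsilon$. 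Equivalently $\bar{g}=r\circ\bar{\phi}$, where $r\colon X/G\vee S^n\to X/G$ restricts to the identity on $X/G$ and to $q\circ\epsilon$ on $S^n$.

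First I would check that $\bar{g}$ is an isomorphism, in fact the identity, on $\pi_*$ for $*<n$. This is routine: since $S^n$ is $(n-1)$-connected, the inclusion $\iota\colon X/G\hookrightarrow X/G\vee S^n$ induces an isomorphism on $\pi_*$ for $*\le n-1$, and the defining property $j\circ\bar{\phi}\simeq 1\times\bar{\rho}$ (with $j$ the inclusion of the wedge into the product) together with $r\circ\iota=1_{X/G}$ forces $\bar{g}_*=\mathrm{id}$ in this range.

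The heart of the argument is to show that $\bar{g}_*$ is not an isomorphism on $\pi_n$, which I would detect through $\bar{\rho}$. Post-composing gives $\bar{\rho}\circ\bar{g}=(\bar{\rho}\mid\underline{kM})\circ\bar{\phi}$, where $(\bar{\rho}\mid\underline{kM})\colon X/G\vee S^n\to S^n$ is $\bar{\rho}$ on $X/G$ and the degree-$kM$ map on $S^n$ (here $\bar{\rho}\circ q\simeq\underline{k}\circ\rho$ by Proposition~\ref{rho X/G2} and $\rho\circ\epsilon$ has degree $M$). For any $\beta\in\pi_n(X/G)$ the defining property of $\bar{\phi}$ gives $j_*\bigl(\bar{\phi}_*\beta\bigr)=(\beta,\bar{\rho}_*(\beta)\,\iota_n)$, so $\bar{\phi}_*\beta=\iota_*\beta+\bar{\rho}_*(\beta)\,\kappa_*\iota_n+c_\beta$ with $c_\beta\in\ker j_*$. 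Applying $(\bar{\rho}\mid\underline{kM})_*$ yields $\bar{\rho}_*\circ\bar{g}_*=(1+kM)\,\bar{\rho}_*$ on $\pi_n(X/G)$, provided the correction $c_\beta$ is killed. Since $\bar{\rho}_*(q\circ\epsilon)=kM\neq 0$ the functional $\bar{\rho}_*$ is nonzero, and comparing images shows $\bar{g}_*$ cannot be surjective: if it were, then $\mathrm{im}(\bar{\rho}_*)=\mathrm{im}(\bar{\rho}_*\circ\bar{g}_*)=(1+kM)\,\mathrm{im}(\bar{\rho}_*)$, which is impossible as $1+kM\ge 2$. Because $\pi_n(X/G)$ is finitely generated (its universal cover is a simply-connected finite complex), a non-surjective endomorphism is not an isomorphism, so $\bar{g}_*$ is not an isomorphism on $\pi_n$, completing the proof.

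The main obstacle is controlling the correction term $c_\beta$, which is precisely where the non-simply-connectedness enters. The comparison $j_*\colon\pi_n(X/G\vee S^n)\to\pi_n(X/G\times S^n)$ is no longer an isomorphism as in the simply-connected case; by Lemma~\ref{ss} the pair is only $n$-connected, and a degree count shows that $\ker j_*$ in dimension $n$ is generated by Whitehead products $[\gamma,\kappa_*\iota_n]$ with $\gamma\in\pi_1(X/G\vee S^n)\cong G$. I expect the crux to be that these products vanish under $(\bar{\rho}\mid\underline{kM})_*$, since $\bar{\rho}$ sends $\pi_1$ into $\pi_1(S^n)=0$ and hence $[\bar{\rho}_*\gamma,\,\underline{kM}_*\iota_n]=0$. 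A second subtlety worth flagging is that the failure is genuinely integral rather than rational: on $H^n(X/G;\Q)$ the induced map $\bar{g}^*=\mathrm{id}+u\otimes(q\circ\epsilon)^*$ has determinant $1+kM\neq 0$ and is therefore invertible, so one must track $\Z$-coefficients throughout. Indeed the same rank-one computation on $H^n(X/G;\Z)$ has determinant $1+kM\neq\pm1$, which provides an alternative, Whitehead-product-free route to non-equivalence and hence to the stated lower bound.
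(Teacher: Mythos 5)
Your map $\bar g=\nabla\circ(1\vee(q\circ\epsilon))\circ\bar\phi$ and the detection of the failure on $\pi_n$ through $\bar\rho$ coincide with the paper's own proof, so the overall route is the same; what is genuinely different is your treatment of the correction term, and this is in fact a repair of a step the paper elides. The paper passes directly from $j\circ\bar\phi\simeq 1\times\bar\rho$ to $\bar\phi_*(q\circ\epsilon)=\iota_*(q\circ\epsilon)+kM\,\kappa_*\iota_n$ and concludes $\bar g_*(q\circ\epsilon)=(1+kM)(q\circ\epsilon)$; this silently assumes that $j_*$ is injective on $\pi_n(X/G\vee S^n)$, which holds in the simply-connected case (there the pair is $(n+1)$-connected) but not here, where Lemma \ref{ss} gives only $n$-connectedness of the pair. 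Your $c_\beta\in\ker j_*$ is exactly this defect, and your description of $\ker j_*$ as generated by the Whitehead products $[\gamma,\kappa_*\iota_n]$, $\gamma\in\pi_1$, is correct: pass to universal covers, where $X/G\vee S^n$ is covered by $X\vee\bigvee_{g\in G}S^n$ mapping to $X\times S^n$, and apply the relative Hurewicz theorem; the kernel is spanned by the differences $\gamma\cdot\kappa_*\iota_n-\kappa_*\iota_n$. Killing these by mapping into $S^n$, where $\pi_1$ vanishes, makes your identity $\bar\rho_*\circ\bar g_*=(1+kM)\bar\rho_*$ robust, whereas the paper's pointwise formula is a priori only valid modulo terms $\gamma\cdot(q\circ\epsilon)-(q\circ\epsilon)$, which need not vanish because $G$ is assumed to act trivially only on rational cohomology, not on $\pi_n$. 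Your closing alternative, computing $\bar g^*$ on $H^n(X/G;\Z)$ as a rank-one perturbation of the identity with determinant $1+kM\ne\pm 1$, is also sound (one gets $\bar\phi^*(x,y)=x+\bar\rho^*y$ from $p_{X/G}\circ\bar\phi\simeq 1_{X/G}$ and $p_{S^n}\circ\bar\phi\simeq\bar\rho$) and buys a complete bypass of the $\ker j_*$ issue, since restriction to the wedge is well behaved on $H^n$ even though $j_*$ is not on $\pi_n$; it is arguably the cleanest way to get the stated lower bound.
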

	
	\begin{proof}
		Let $\bar{\rho}$ and $\bar{\phi}$ be as in Propositions \ref{rho X/G2} and \ref{phi X/G}. We define a map $g\colon X/G\to X/G$ by the composition
		\[
		X/G\xrightarrow{\bar{\phi}}X/G\vee S^n\xrightarrow{1\vee q\circ\epsilon}X/G\vee X/G\xrightarrow{\nabla}X/G
		\]
		where $\nabla_Y\colon Y\vee Y\to Y$ is the folding map. Since $p_{X/G}\circ\bar{\phi}=1_{X/G}$ where $p_{X/G}:X/G\vee S^n\to X/G$ is the projection on $X/G$, $f$ is an isomorphism in $\pi_*$ for $*<n$. On the other hand,
		\begin{align*}
			g_*(q\circ\epsilon)&=\nabla\circ(1\vee q\circ\epsilon)\circ\bar{\phi}\circ q\circ\epsilon\\
			&=\nabla\circ(1\vee q\circ\epsilon)\circ(q\circ\epsilon\vee\bar{\rho}\circ q\circ\epsilon)\\
			&=\nabla\circ(1\vee q\circ\epsilon)\circ(q\circ\epsilon\vee\underline{k}\circ\rho\circ\epsilon)\\
			&=\nabla\circ(1\vee q\circ\epsilon)\circ(q\circ\epsilon+kM)\\
			&=\nabla\circ((q\circ\epsilon)\vee kM(q\circ\epsilon))\\
			&=(1+kM)(q\circ\epsilon)
		\end{align*}
		Thus since $k,M$ are positive integers and $q\circ\epsilon\in\pi_{n}(X/G)$ is of infinite order, $g$ is not an isomorphism in $\pi_{n}$. 
	\end{proof}

	\begin{proof}
		[Proof of Theorem \ref{main co-H}] 
		Apply Lemma \ref{cohomology generator} and Theorem \ref{lower bound coH_0} to \eqref{coH equation}.
	\end{proof}
	\bibliography{text,book}
\end{document}